\newcommand{\Var}[1]{\mathcal{#1}}
\newcommand{\Sec}[2]{\sigma_{#1}(#2)}
\newcommand{\SecZ}[2]{\sigma^0_{#1}(#2)}
\newcommand{\tensor}[1]{\mathfrak{#1}}
\newcommand{\vect}[1]{\mathbf{#1}}
\newcommand{\sten}[3]{\vect{#1}_{#2}^{#3}}
\newcommand{\Tang}[2]{\mathrm{T}_{#1} {#2}}
\newcommand{\party}{\sqcup}
\newcommand{\Pj}{\mathbb{P}}
\newcommand{\C}{\mathbb{C}}
\newcommand{\R}{\mathbb{R}}
\newcommand{\F}{\mathbb{F}}
\newcommand{\oo}{\mathcal{O}}
\newcommand{\ii}{\mathcal{I}}
\newcommand{\refthm}[1]{{Theorem \ref{#1}}}
\newcommand{\reflem}[1]{{Lemma \ref{#1}}}
\newcommand{\refeqn}[1]{{(\ref{#1})}}
\newcommand{\refsec}[1]{{Section \ref{#1}}}
\newcommand{\refcor}[1]{{Corollary \ref{#1}}}
\newcommand{\refprop}[1]{{Proposition \ref{#1}}}
\newcommand{\refconj}[1]{{Conjecture \ref{#1}}}
\newcommand{\refrem}[1]{{Remark \ref{#1}}}
\newcommand{\rank}[1]{\operatorname{rank}(#1)}
\newtheorem{theorem}{Theorem}
\newtheorem{lemma}[theorem]{Lemma}
\newtheorem{proposition}[theorem]{Proposition}
\newtheorem{corollary}[theorem]{Corollary}
\newtheorem{conjecture}[theorem]{Conjecture}
\newtheorem{remark}[theorem]{Remark}
\newtheorem{romanthm}{Lemma}
\newtheorem{romancor}[romanthm]{Corollary}
\theoremstyle{definition}
\newtheorem{definition}[theorem]{Definition}
\newtheorem{example}[theorem]{Example}
\newtheorem{romanrem}[romanthm]{Remark}
\newtheorem{romanex}[romanthm]{Example}
\title[Effective criteria for identifiability]{Effective criteria for specific identifiability of tensors and forms}
\author{Luca Chiantini}
\address{Luca Chiantini: Dipartimento di Ingegneria dell'Informazione e Scienze Matematiche, 
Universit\`a di Siena, Italy.}
\email{luca.chiantini@unisi.it}
\author{Giorgio Ottaviani}
\address{Giorgio Ottaviani: Dipartimento di Matematica e Informatica ``Ulisse Dini'', Universit\`a di Firenze, Italy.}
\email{ottavian@math.unifi.it}
\thanks{The first and second author are members of the Italian GNSAGA-INDAM}
\author{Nick Vannieuwenhoven}
\address{Nick Vannieuwenhoven: KU Leuven, Department of Computer Science, Belgium}
\email{nick.vannieuwenhoven@cs.kuleuven.be}
\thanks{The third author was supported by a Postdoctoral fellowship of the 
Research Foundation--Flanders (FWO)}
\keywords{tensor rank decomposition, Waring decomposition, effective identifiability, reshaped Kruskal criterion, Hilbert function}
\subjclass[2000]{15A69, 15A72, 14N20, 14N05, 14Q15, 14Q20}
\begin{document}

\maketitle

\begin{abstract}
In applications where the tensor rank decomposition arises, one often relies on its identifiability properties
for interpreting the individual rank-$1$ terms appearing in the decomposition. Several criteria for 
identifiability have been proposed in the literature, however few results exist on how frequently 
they are satisfied. We propose to call a criterion effective if it is satisfied on a dense, open subset 
of the smallest semi-algebraic set enclosing the set of rank-$r$ tensors. 
We analyze the effectiveness of Kruskal's criterion when it is combined with reshaping. It is proved that this criterion is effective for both real and 
complex tensors in its entire range of applicability, which is usually much smaller than
the smallest typical rank. Our proof explains when reshaping-based algorithms for computing tensor rank decompositions may be expected to recover the decomposition.
Specializing the analysis to symmetric tensors or forms reveals that the reshaped Kruskal criterion may even be effective up to the smallest typical rank for some third, fourth and sixth order symmetric tensors of small dimension as well as for binary forms of degree at least three. 
We extended this result to $4 \times 4 \times 4 \times 4$ symmetric tensors by analyzing the Hilbert function, resulting in a criterion for symmetric identifiability that is effective up to symmetric rank $8$, which is optimal.
\end{abstract}

\section{Introduction}
A tensor rank decomposition expresses a tensor $\tensor{A} \in \F^{n_1} \otimes \F^{n_2} \otimes \cdots \otimes \F^{n_d}$ as a linear combination of rank-1 tensors, as follows:
\begin{align} \label{eqn_rank_decomposition}
 \tensor{A} = \sum_{i=1}^r \sten{a}{i}{1} \otimes \sten{a}{i}{2} \otimes \cdots \otimes \sten{a}{i}{d},
\end{align}
where $\sten{a}{i}{k} \in \F^{n_k}$, and $\F$ is either the real field $\R$ or complex field $\C$. When $r$ is minimal in the above expression, then it is called the \emph{rank} of $\tensor{A}$. 
A key property of the tensor rank decomposition is its \emph{generic identifiability} \cite{BCO2013,COV2014,DDL2015}. This means that the expression \refeqn{eqn_rank_decomposition} is unique up to a permutation of the summands and scaling of the vectors on a dense open subset of the set of tensors admitting an expression as in \refeqn{eqn_rank_decomposition}. This uniqueness property renders it useful in several applications. For instance, in chemometrics, decomposition \refeqn{eqn_rank_decomposition} arises in the simultaneous spectral analysis of unknown mixtures of fluorophores, where the tensor rank decomposition of the corresponding tensor reveals the emission-excitation matrices of the individual chemical molecules in the mixtures, hence allowing a trained chemist to identify the fluorophores \cite{Appellof1981}. 

Another application of tensor decompositions is parameter identification in statistical models with hidden variables, such as principal component analysis (or blind source separation), exchangeable single topic models and hidden Markov models. Such applications were recently surveyed in a tensor-based framework in \cite{Anandkumar2014b}. The key in these applications consists of recovering the unknown parameters by computing a \emph{Waring decomposition} of a higher-order moment tensor constructed from the known samples. In other words, one seeks a decomposition
\begin{align}\label{eqn_waring_decomposition}
 \tensor{A} = \sum_{i=1}^r \lambda_i \sten{a}{i}{} \otimes \cdots \otimes \sten{a}{i}{} = \sum_{i=1}^r \lambda_i \sten{a}{i}{\otimes d},
\end{align}
where $\sten{a}{i}{} \in \F^{n}$ and $\lambda_i \in \F$ for all $i=1,\ldots,r$. Note that $\tensor{A}$ is a \emph{symmetric} tensor in this case. If $r$ is minimal, then $r$ is called the Waring or symmetric rank of $\tensor{A}$. Uniqueness of Waring decompositions is again the key for ensuring that the recovered parameters of the model are unique and interpretable. Generic identifiability of complex Waring decompositions of subgeneric rank for nearly all tensor spaces was proved in \cite{COV2016}.

We address in this paper the problem of \emph{specific identifiability}: given a tensor rank decomposition of length $r$ in the space $\F^{n_1}\otimes\F^{n_2}\otimes\cdots\otimes\F^{n_d}$, prove that it is unique. Let $\Var{S}$ denote the variety of rank-$1$ tensors in this space. As it is conjectured that the generic\footnote{We call $p \in S$ ``generic'' with respect to some property in the set $S$, if the property fails to hold at most for the elements in a strict subvariety of $S$.} tensor of \emph{subtypical}\footnote{Recall that the smallest typical rank over $\R$ coincides with the generic rank over $\C$ \cite{BT2015}, hence the term \emph{subtypical} through all this paper coincides with the term \emph{subgeneric} used in \cite{COV2016}.} rank $r$, i.e.,
\begin{align} \label{eqn_subgeneric}
  r < \overline{r}_\Var{S} = \frac{n_1 n_2 \cdots n_d}{n_1 + n_2 + \cdots + n_d - d + 1} 
\end{align}
has a unique decomposition, provided it is not one of the exceptional cases listed in \cite[Theorem 1.1]{COV2014},
we believe that any practical criterion for specific identifiability must be more informative than the following naive Monte Carlo algorithm:
\begin{enumerate}
 \item[S1.] If the number of terms in the given tensor decomposition is less than $\overline{r}_\Var{S}$, then claim ``Identifiable,'' otherwise claim ``Not identifiable.''
\end{enumerate}
This simple algorithm has a $100\%$ probability of returning a correct result if one samples decompositions of length $r$ from any probability distribution whose support is not contained in the Zariski-closed locus where $r$-identifiability fails (assuming generic $r$-identifiability; see \refsec{sec_identifiability}). It also has a $0\%$ chance of returning an incorrect answer---it \emph{can} be wrong, e.g., if the unidentifiable tensor $\vect{a}\otimes\vect{a}\otimes\vect{a} + \vect{b}\otimes\vect{b}\otimes\vect{a}$ is presented as input, but the probability of sampling these tensors is zero. Deterministic algorithms for specific $r$-identifiability, e.g., \cite{Kruskal1977,DDL2013Part2,COV2014,SB2000,JS2004,Stegeman2009}, merit consideration, however only if they are what we propose to call \emph{effective}: if they can prove identifiability on a dense, open subset of the set of tensors admitting decomposition \refeqn{eqn_rank_decomposition}. A deterministic criterion is thus effective if its conditions are satisfied generically; that is, if the same criterion also proves generic identifiability. Kruskal's well-known criterion for $r$-identifiability is deterministic: it is a sufficient condition for uniqueness. If the criterion is not satisfied, the outcome of the test is inconclusive. Effective criteria are allowed to have such inconclusive outcomes provided that they do not form a Euclidean-open set. It will not surprise the experts that Kruskal's criterion \cite{Kruskal1977} is effective. Domanov and De Lathauwer \cite{DDL2015} recently proved that some of their criteria for \emph{third-order} tensors from \cite{DDL2013Part2} are effective. Presently, only a few effective criteria for specific $r$-identifiability of tensors of higher order, i.e., $d\ge4$, are---informally---known, notably the generalization of Kruskal's criterion to higher-order tensors due to Sidiropoulos and Bro \cite{SB2000}.

In private communication, I.~Domanov remarked that ``in practice, when one wants to check that the [tensor rank decomposition] of a tensor of order higher than $3$ is unique, [one] just reshapes the tensor into a third-order tensor and then applies the classical Kruskal result [...]. The reduction to the third-order case is quite standard and well-known;'' 
indeed this idea appears also in \cite{SB2000,CGG2002,Landsberg2012,PTC2013,SdL2015}.
Formally, it can be stated as follows.
Let $\vect{h}\cup\vect{k}\cup\vect{l}=\{1,2,\ldots,d\}$ be a partition where $\vect{h}$, $\vect{k}$ and $\vect{l}$ have cardinalities $d_1$, $d_2$ and $d_3$ respectively. Let $\Var{S} = \operatorname{Seg}( \F^{n_1} \times \cdots \times \F^{n_d} )$ be the variety of rank-$1$ tensors in $\F^{n_1} \otimes \cdots \otimes \F^{n_d}$, and let $\Var{S}_{\vect{h},\vect{k},\vect{l}} = \operatorname{Seg}( \F^{n_{h_1} \cdots n_{h_{d_1}}} \times \F^{n_{k_1}\cdots n_{k_{d_2}}} \times \F^{n_{l_1}\cdots n_{l_{d_3}}} )$ be the variety of rank-$1$ tensors in the reshaped tensor space. We may consider the natural inclusion
\(
\Var{S} \hookrightarrow \Var{S}_{\vect{h},\vect{k},\vect{l}}
\)
and then apply a criterion for specific $r$-identifiability with respect to $\Var{S}_{\vect{h},\vect{k},\vect{l}}$. If this criterion certifies $r$-identifiability, then it entails $r$-identifiability with respect to $\Var{S}$ as well. 
While this idea is valid, \emph{applying an effective criterion for third-order tensors to reshaped higher-order tensors does not suffice for concluding that it is also an effective criterion for higher-order tensors}. 
Indeed, since $\Var{S}$ has dimension strictly less than $\Var{S}_{\vect{h},\vect{k},\vect{l}}$ one expects that the set of rank-$r$ tensors in $\F^{n_1}\otimes\cdots\otimes\F^{n_d}$ constitutes a Zariski-closed subset of the rank-$r$ tensors in the reshaped tensor space. As a result, the effective criterion for $\Var{S}_{\vect{h},\vect{k},\vect{l}}$ might thus never apply to the elements of $\Var{S} \hookrightarrow \Var{S}_{\vect{h},\vect{k},\vect{l}}$. 
This observation was the impetus for the present work and the reason why our results will always be presented in the general setting.

Our first main result, proved in \refsec{sec_reshaped_kruskal}, can be stated informally as follows.
\begin{theorem}
 Kruskal's criterion applied to a reshaped rank-$r$ tensor is an effective criterion for specific $r$-identifiability.
\end{theorem}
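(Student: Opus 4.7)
The approach is to show that, for each admissible partition $\vect{h}\sqcup\vect{k}\sqcup\vect{l}=\{1,\ldots,d\}$, the locus in $\sigma_r(\Var{S})$ on which the reshaped Kruskal criterion succeeds is Zariski-open and dense whenever it is non-empty. Taking the union over partitions, this gives effectiveness on the full range of applicability, both over $\F=\C$ and over $\F=\R$.

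\emph{Step 1 (setup).} Let $\Pi=(\F^{n_1})^r\times\cdots\times(\F^{n_d})^r$ parameterize ordered decompositions, with summation map $\alpha:\Pi\to\sigma_r(\Var{S})$. For a fixed partition, the reshaped factor matrices $A_\vect{h},A_\vect{k},A_\vect{l}$, whose columns are Khatri--Rao products of the $\sten{a}{i}{j}$, are polynomial in the data of $\Pi$. The condition $k(A_\vect{h})+k(A_\vect{k})+k(A_\vect{l})\geq 2r+2$ is then a finite union of conditions of the form ``some explicit product of minors is nonzero'', hence carves out a Zariski-open subset $U\subseteq\Pi$.

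\emph{Step 2 ($U$ is non-empty throughout the range of applicability).} Setting $N_\vect{h}=\prod_{j\in\vect{h}} n_j$ and similarly for $\vect{k},\vect{l}$, I claim $U\ne\emptyset$ as soon as $\min(r,N_\vect{h})+\min(r,N_\vect{k})+\min(r,N_\vect{l})\geq 2r+2$, which is the combinatorial range where the criterion could possibly apply at all. The claim reduces to showing that for generic $(\sten{a}{i}{j})\in\Pi$, each $A_\vect{h}$ attains the maximum possible Kruskal rank $\min(r,N_\vect{h})$. In turn this follows from the elementary fact that any $k\leq N$ generic points on an irreducible affine variety $X\subseteq\F^N$ that is not contained in a hyperplane are linearly independent, applied to the nondegenerate Segre cone $\Var{S}_\vect{h}\subseteq\F^{N_\vect{h}}$ (and likewise for $\vect{k},\vect{l}$).

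\emph{Step 3 (descent to $\sigma_r(\Var{S})$).} The set $U$ is invariant under the natural symmetry group $G$ of coordinated rescalings within each rank-$1$ term together with permutations of the $r$ summands. Kruskal's theorem forces $U$ to be \emph{saturated}: if $\alpha(\vect{x})=\alpha(\vect{y})$ with $\vect{x}\in U$, then the decomposition is unique up to $G$ and $\vect{y}\in U$ as well. Hence $\alpha^{-1}(\alpha(U))=U$ and $\alpha(\Pi\setminus U)=\sigma_r(\Var{S})\setminus\alpha(U)$. Since $\Pi\setminus U$ is closed and $G$-invariant, each of its fibers over $\alpha$ contains a full $G$-orbit of dimension $r(d-1)$; the standard dimension inequality then gives $\dim\alpha(\Pi\setminus U)\leq\dim(\Pi\setminus U)-r(d-1)<\dim\sigma_r(\Var{S})$, so $\alpha(U)$ contains a Zariski-open dense subset of $\sigma_r(\Var{S})$. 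Since Zariski density implies Euclidean density, this yields effectiveness in both the real and complex settings.

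The main obstacle is Step 2: establishing that generic Khatri--Rao products attain the maximum possible Kruskal rank. The geometric content is that, from the standpoint of linear dependence, generic points of the Segre variety behave like generic points of the ambient space, which is controlled purely by the nondegeneracy of the Segre embedding. Steps 1 and 3 are then standard bookkeeping around open conditions, $G$-invariance, and generic fiber dimension.
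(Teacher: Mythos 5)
Your proposal is correct in substance and its core coincides with the paper's: everything hinges on showing that for a generic configuration the reshaped factor matrices $A_{\vect{h}},A_{\vect{k}},A_{\vect{l}}$ simultaneously attain maximal Kruskal rank $\min(r,\Pi_{\vect{h}})$, etc., which is exactly the content of the paper's \reflem{lem_max_glp} (GLP of all projections $\pi_{\vect{h}}(p_i)$), after which \refprop{prop_kruskal} gives effectiveness in the range \refeqn{eqn_r_bound}. Where you differ is in how this genericity is established and in what you do afterwards. The paper proves \reflem{lem_max_glp} by induction on $|\vect{h}|$, exhibiting the non-GLP configurations as explicit Zariski-closed loci $G_{\vect{h}}\subset\Var{S}_{\vect{h}}^{\times r}$ (with a Gr\"obner-basis product construction in the inductive step); you instead work on the affine parameter space $\Pi$ of factor matrices and invoke the standard fact that $k\le N$ generic points of an irreducible nondegenerate variety are linearly independent, applied to each Segre cone $\widehat{\Var{S}}_{\vect{h}}$ and to each subset of $\min(r,\Pi_{\vect{h}})$ columns. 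This is a cleaner and more elementary route to the same lemma, and working on $\Pi$ (an affine space) makes the passage from Zariski-open-dense to Euclidean-dense painless over $\R$ as well. Your Step 3, by contrast, is more than the paper needs: under the paper's definition, effectiveness only requires that the certified locus be Euclidean-dense in $\Sec{r}{\Var{S}}$, and this already follows from continuity of $\alpha$ together with density of $U$ in $\Pi$, since $\alpha(\Pi)$ is Euclidean-dense in the secant semi-algebraic set by construction. If you do keep Step 3 to get the stronger ``contains a Zariski-open dense subset'' statement, note three small inaccuracies: $\alpha(\Pi)$ is only dense in $\sigma_r(\Var{S})$, so the identity $\alpha(\Pi\setminus U)=\sigma_r(\Var{S})\setminus\alpha(U)$ should read $\alpha(\Pi)\setminus\alpha(U)$; the $G$-orbit dimension $r(d-1)$ is only valid on tuples with all vectors nonzero (tuples with a zero factor must be handled separately, e.g.\ by mapping them into $\sigma_{r-1}$); and the final inequality against $\dim\sigma_r(\Var{S})$ tacitly assumes the secant has the expected dimension, which is not an assumption you are given but can be deduced a posteriori, since identifiability on $U$ forces the generic fiber of $\alpha$ to be a single $G$-orbit. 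None of these affects the validity of the overall argument.
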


The reshaped Kruskal criterion as well as the criteria in \cite{DDL2013Part2,DDL2015,JS2004} applied 
to a reshaped tensor can all be considered as state-of-the-art results in specific identifiability. 
Nevertheless, combining reshaping with a criterion for lower-order identifiability may not be expected to prove specific identifiability up to the (nearly) optimal value $\overline{r}_\Var{S}-1$. Indeed, consider any partition $\vect{h}_{1} \cup \cdots \cup \vect{h}_{t} = \{1,2,\ldots,d\}$ with $t < d$. Then, 
\[
 \overline{r}_{\Var{S}_{\vect{h}_1,\ldots,\vect{h}_t}} =  \frac{ n_1 n_2 \cdots n_d }{ 1 + \sum_{k=1}^t \bigl( -1 + \prod_{\ell \in \vect{h}_k} n_{\ell} \bigr) } \le \frac{n_1 n_2 \cdots n_d}{n_1 + n_2 + \cdots + n_d - d + 1} = \overline{r}_\Var{S},
\]
where typically the integers $n_i \ge 2$ are such that a strict inequality occurs. 

In the second half of the paper, the reshaped Kruskal criterion is considered for symmetric tensors. Remarkably, in $4$ cases of small dimension as well as for binary forms this criterion is effective in the entire range where generic $r$-identifiability holds. 
We show that an analysis of the Hilbert function yields another case, namely the case of symmetric tensors in $\F^4 \otimes \F^4 \otimes \F^4 \otimes \F^4$. 
The second main result, which is proved in \refsec{sec_first_effective}, can be stated as follows.
\begin{theorem}\label{thm_effective_crit}
Let $S^d \F^n$ be the linear subspace of symmetric tensors in $\F^n \otimes \cdots \otimes \F^n$.
Then, there exist effective criteria for specific symmetric identifiability of symmetric rank-$r$ tensors in $S^3 \F^{n}$ with $n=3,4,5$ and $6$, $S^4 \F^3$, $S^4 \F^4$, $S^6 \F^3$, and $S^d \F^{2}$ for $d \ge 3$ that are effective for every $r \in \mathbb{N}$ where generic $r$-identifiability holds.
\end{theorem}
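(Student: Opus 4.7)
The plan is to separate the listed spaces into two groups, handled by different techniques. The first group comprises $S^3\F^n$ for $n\in\{3,4,5,6\}$, $S^4\F^3$, $S^6\F^3$, and the binary forms $S^d\F^2$ with $d\ge 3$. For these, the proof reduces to invoking the first main theorem (effectiveness of the reshaped Kruskal criterion), once the numerical range is checked. The outlier is $S^4\F^4$, where a short computation shows that no reshaping of the fourth-order tensor into a third-order one lifts Kruskal's bound up to the boundary rank $r=8$ at which generic identifiability still holds; this case will require a separate argument based on the Hilbert function.

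For the first group I would proceed case by case. The generic $r$-identifiability range of each space is known: for $S^d\F^2$ it follows from Sylvester's classification of binary forms, while for the three-dimensional and four-dimensional cases in the list it is read off from the Alexander--Hirschowitz theorem combined with the symmetric identifiability classification in \cite{COV2014,COV2016}. For each space I would then choose a partition $\vect{h}\party\vect{k}\party\vect{l}$ of $\{1,\dots,d\}$ into three parts of nearly equal size, reshape into a third-order tensor space, and verify Kruskal's inequality $2r+2 \le k_{\vect{h}} + k_{\vect{k}} + k_{\vect{l}}$ for every $r$ in the identifiability range, where $k_\bullet$ denotes the generic Kruskal rank of the corresponding factor matrix under reshaping. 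For binary forms this is a single closed-form inequality in $d$; in the other cases it reduces to a finite arithmetic check. Combined with the first main theorem, each such case yields an effective criterion.

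For $S^4\F^4$ the strategy switches to the apolarity correspondence. A length-$r$ Waring decomposition of $f\in S^4\C^4$ is in bijection with a set $\Gamma\subset\Pj^3$ of $r$ simple points whose homogeneous ideal contains $f$ in degree $4$. The criterion I would formulate has two clauses: first, $\Gamma$ imposes independent conditions on quartics; second, for every $P\in\Gamma$ the residual $\Gamma\setminus\{P\}$ still imposes independent conditions on cubics. Uniqueness of the decomposition of $f$ then follows from a standard Cayley--Bacharach-type argument on the apolar ideal, together with the fact that the two Hilbert-function equalities force any second decomposition to share its support with $\Gamma$. To show effectiveness at $r=8$, I would verify both conditions on a general set of eight points in $\Pj^3$: the first is a transversality statement in degree $4$ covered by Alexander--Hirschowitz, and the second is a generic property at rank $8$, as $8$ lies well within the range in which a general configuration of that many points enjoys the required residual behaviour in $\Pj^3$.

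The main obstacle is the $S^4\F^4$ case: the delicate point is to frame a Hilbert-function criterion that is simultaneously strong enough to imply symmetric identifiability at rank $8$ and satisfied on a dense, open subset of the set of rank-$8$ symmetric tensors, given that at $r=8$ the relevant dimension counts on quartics and cubics are tight. The remaining cases collapse to bookkeeping once the first main theorem is available and the identifiability range is tabulated.
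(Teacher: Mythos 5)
Your first group is handled exactly as in the paper: reshape and apply the symmetric reshaped Kruskal criterion (\refcor{cor_symm_reshaped_Kruskal}), then compare with the known generic-identifiability ranges (note that for the perfect cases $S^3\F^4$, $S^3\F^5$ and the odd-degree binary forms the range comes from \refthm{thm_perfect}, not from Alexander--Hirschowitz alone). The genuine problems are in the $S^4\F^4$ case. First, your target rank is wrong: generic $8$-identifiability does \emph{not} hold for $S^4\F^4$. The pair $\Var{V}_{4,4}$, $r=8$ is one of the classical exceptions listed in \refthm{thm_cov2016}; the generic quartic of rank $8$ has exactly two complex decompositions. Hence no criterion whatsoever can be effective at $r=8$, and the theorem only requires (and the paper only proves) effectiveness up to $r=7$, which is precisely where the reshaped Kruskal criterion (effective only up to $r\le 6$ here) leaves a gap.

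Second, the two-clause Hilbert-function criterion you propose is not sufficient for uniqueness, already at $r=7$. Take $\Gamma$ to be seven general points on a twisted cubic curve $C\subset\Pj\C^4$. Since a line meets $C$ in at most two points and a plane in at most three, $\Gamma$ is in GLP; since $C\cong v_3(\Pj\C^2)$ is projectively normal and quartics (resp.\ cubics) restrict to $C$ as binary forms of degree $12$ (resp.\ $9$), the set $\Gamma$ imposes independent conditions on quartics and every residual $\Gamma\setminus\{P\}$ imposes independent conditions on cubics. So both of your clauses hold, and yet such tensors are not $7$-identifiable: $v_4(C)$ is a rational normal curve of degree $12$, and a rank-$7$ point of its span lies on a one-dimensional family of $7$-secant spaces because $2\cdot 7>13$ (\refprop{prop_criterion}). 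In particular no Cayley--Bacharach argument can ``force any second decomposition to share its support with $\Gamma$'': the supports move in a continuous family along $C$. This configuration is exactly the obstruction the paper has to isolate and then test for. The actual proof shows, assuming GLP and minimality and using $\mathit{CB}(4)$ together with the Geramita--Kreuzer--Robbiano inequality and the Bigatti--Geramita--Migliore theorem, that any alternative decomposition forces the $h$-vector of $Z=A\cup B$ to be $(1,3,3,3,3,1)$ and hence forces $A$ onto a twisted cubic (\refprop{prop_twisted_cubic}), and it then excludes that possibility not by a Hilbert-function condition on $A$ alone but by the Terracini-type tangent-space rank test of \reflem{lem_terracini_criterion} (step S4c of the algorithm, cf.\ \refprop{prop_sufficient}), whose failure locus is Zariski-closed, giving effectiveness at $r=7$. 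Your proposal is missing both this reduction and any test capable of detecting the twisted-cubic configurations.
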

These are the only cases we know of where effective criteria for specific identifiability exist that can be applied up to the bound for generic identifiability.

The outline of the remainder of this paper is as follows. In the next section, some preliminary material is recalled. The known results about generic identifiability are presented in \refsec{sec_identifiability}. We analyze the reshaped Kruskal criterion in \refsec{sec_reshaped_kruskal}: we prove that it is an effective criterion and present a heuristic for choosing a good reshaping. \refsec{sec_symmetric} presents the variant of the reshaped Kruskal criterion for symmetric tensors, proving most cases of the second main result. It also explains how analyzing the Hilbert function may lead to results about specific identifiability for symmetric tensors. These insights culminate in \refsec{sec_first_effective}, where we complete the proof the second main result, then provide an algorithm implementing that effective criterion, and finally present some concrete examples. In \refsec{sec_applications}, we explain when reshaping-based algorithms for computing tensor rank decompositions will recover the decomposition. \refsec{sec_conclusions} presents our main conclusions.

\subsection*{Notation}
Varieties are typeset in a calligraphic font, tensors in a fraktur font, matrices in upper case, and vectors in boldface lower case. The field $\F$ denotes either $\R$ or $\C$. Projectivization is denoted by $\Pj$. 
$V$ denotes a finite-dimensional vector space over the field $\F$. 
The matrix transpose and conjugate transpose are denoted by $\cdot^T$ and $\cdot^H$ respectively. 
The Khatri--Rao product of $A \in \F^{m \times r}$ and $B \in \F^{n \times r}$ is
\(
 A \odot B = \begin{bmatrix} \vect{a}_1 \otimes \vect{b}_1 & \vect{a}_2 \otimes \vect{b}_2 & \cdots & \vect{a}_r \otimes \vect{b}_r \end{bmatrix}.
\)
A set partition is denoted by $S_1 \party \cdots \party S_k = \{1,\ldots,m\}$. 
If $\Var{X}$ is a variety, then $\Var{X}_0$ is defined as $\Var{X}$ minus the zero element. The affine cone over a projective variety $\Var{X} \subset \Pj\F^n$ is $\widehat{\Var{X}} := \{ \alpha x \;|\; x \in \Var{X}, \alpha \in \F \}$. 
The Segre variety $\operatorname{Seg}(\Pj\F^{n_1} \times \cdots \times \Pj\F^{n_d}) \subset \Pj(\F^{n_1} \otimes \cdots \otimes \F^{n_d})$ is denoted by $\Var{S}$, and the Veronese variety $v_d(\Pj\F^n) \subset \Pj S^d \F^n$ is denoted by $\Var{V}$. The projective dimension of the Segre variety $\Var{S}$ is denoted by $\Sigma = \sum_{k=1}^d (n_k - 1)$. The dimension of $\F^{n_1} \otimes \cdots \otimes \F^{n_d}$ is $\Pi = \prod_{k=1}^d n_k$, and the dimension of $S^d \F^n$ is $\Gamma = \binom{n-1+d}{d}$.

\section{Preliminaries}\label{sec_preliminaries}
We recall some terminology from algebraic geometry; the reader is referred to Landsberg \cite{Landsberg2012} for a more detailed discussion. 

\subsection{Segre and Veronese varieties}
The set of rank-$1$ tensors in the projective space $\Pj(\F^{n_1} \otimes \F^{n_2} \otimes \cdots \otimes \F^{n_d})$ is a projective variety, called the \emph{Segre variety}. It is the image of the \emph{Segre map}
\begin{align*}
 \mathrm{Seg} : \Pj\F^{n_1} \times \Pj\F^{n_2} \times \cdots \times \Pj\F^{n_d} &\to \Pj(\F^{n_1} \otimes \F^{n_2} \otimes \cdots \otimes \F^{n_d}) \cong \Pj\F^{n_1 n_2 \cdots n_d} \\
 ([\sten{a}{ }{1}], [\sten{a}{ }{2}], \ldots, [\sten{a}{ }{d}]) &\mapsto [\sten{a}{ }{1} \otimes \sten{a}{ }{2} \otimes \cdots \otimes \sten{a}{ }{d}]
\end{align*}
where $[\vect{a}] = \{ \lambda \vect{a} \;|\; \lambda\in\F_0 \}$ is the equivalence class of $\vect{a} \in \F^n \setminus \{0\}$. The Segre variety will be denoted by $\Var{S}$. Its dimension is 
\(
 \Sigma = \dim \Var{S} = \sum_{k=1}^d (n_k - 1).
\)

The symmetric rank-$1$ tensors in $\Pj(\F^{n} \otimes \cdots \otimes \F^{n})$ constitute an algebraic variety that is called the \emph{Veronese variety}. It is obtained as the image of
\begin{align*}
 \mathrm{Ver} : \Pj\F^n \to \Pj(\F^n \otimes \cdots \otimes \F^n),\;
 [\vect{a}] \mapsto [\vect{a}^{\otimes d}].
\end{align*}
The Veronese variety will be denoted by $\Var{V}$, and its dimension is $\dim \Var{V} = n-1$. The {span of the} image of the Veronese map is the projectivization of the linear subspace of $\F^n \otimes \cdots \otimes \F^n$ consisting of the \emph{symmetric tensors}, namely
\(
\{ \tensor{A} \;|\; a_{i_1,i_2,\ldots,i_d} = a_{i_{\sigma_1}, i_{\sigma_2}, \ldots, i_{\sigma_d}}, \forall \sigma \in S \},
\)
where $S$ is the set of all permutations of $\{1,2,\ldots,d\}$. This space is isomorphic to the $d$th symmetric power of $\F^n$, i.e., $S^d \F^n = \F^{\binom{n+d}{d}}$, as can be understood from 
\begin{align*}
 v_d : \Pj\F^n \to \Pj( S^d \F^n ),\;
 [\vect{a}] \mapsto [ \vect{a}^{\circ d} ] = \begin{bmatrix} a_{i_1} a_{i_2} \cdots a_{i_d} \end{bmatrix}_{ 1 \le i_1 \le i_2 \le \cdots \le i_d \le n },
\end{align*}
where $\vect{a}^{\circ d}$ is called the $d$the symmetric power of $\vect{a}$.
The homogeneous polynomials of degree $d$ in $n$ variables correspond bijectively with $S^d \F^n$ \cite{CGLM2008,IK1999}. Therefore, the elements of $\Pj(S^d \F^n)$ are often called $d$-forms or simply \emph{forms} when the degree is clear. 

\subsection{Secants of varieties}\label{sec_secant}
Define for a smooth irreducible projective variety $\Var{X} \subset \Pj V$ that is not contained in a hyperplane, such as a Segre or Veronese variety, the abstract $r$-secant variety $\operatorname{Abs} \Sec{r}{\Var{X}}$ as the closure in the Euclidean topology of
\begin{align*}
 \operatorname{Abs} \SecZ{r}{\Var{X}} := \{ ((p_1,p_2,\ldots,p_r),p) \;|\; p \in \langle p_1, p_2, \ldots, p_r \rangle,\, p_i \in \Var{X} \} \subset {\Var{X}}^{\times r}  \times \Pj V,
\end{align*}
Let the image of the projection of $\operatorname{Abs} \SecZ{r}{\Var{X}} \subset {\Var{X}}^{\times r} \times \Pj V$ onto the last factor be denoted by $\SecZ{r}{\Var{X}}$.
Then, the $r$-secant semi-algebraic set of $\Var{X}$, denoted by $\Sec{r}{\Var{X}}$, is defined as the closure in the Euclidean topology of $\SecZ{r}{\Var{X}}$. It is an irreducible semi-algebraic set because of the Tarski--Seidenberg theorem \cite{BCR1998}. For $\F=\C$, the Zariski-closure coincides with the Euclidean closure and $\Sec{r}{\Var{X}}$ is a projective variety \cite{Landsberg2012}. It follows that 
\(
 \dim \Sec{r}{\Var{X}} \le \min\{ r (\dim \Var{X} + 1) , \dim V \} - 1.
\)
If the inequality is strict then we say that $\Var{X}$ has an \emph{$r$-defective} secant semi-algebraic set. If $\Var{X}$ has no defective secant semi-algebraic sets then it is called a \emph{nondefective} semi-algebraic set. The \emph{$\Var{X}$-rank} of a point $p \in \Pj V$ is defined as the least $r$ for which $p = [p_1 + \cdots + p_r]$ with $p_i \in \widehat{\Var{X}}$; we will write $\rank{p} = r$. 

For a nondefective variety $\Var{X} \subset \Pj V$ not contained in a hyperplane, we define the \emph{expected smallest typical} \emph{rank} of $\Var{X}$ as the least integer larger than
\[
 \overline{r}_\Var{X} = \frac{\dim V}{ 1 + \dim \Var{X} },
\]
namely $\lceil \overline{r}_\Var{X} \rceil$. With this definition, the expected smallest typical rank of a nondefective complex Segre variety $\Var{S}_\C \subset \Pj V$ coincides with the value of $r$ for which $\Sec{r}{\Var{S}_\C} = \Pj V$, so that $\SecZ{\lceil\overline{r}_{\Var{S}_\C}\rceil}{\Var{S}_\C}$ is a Euclidean-dense subset of $\Pj V$. In the case of a nondefective real Segre variety $\Var{S}_\R \subset \Pj V$, the expected smallest typical rank as defined above coincides with the smallest typical rank; a rank $r$ is \emph{typical} if the affine cone over $\SecZ{r}{\Var{S}_\R} \subset \Pj V$ is open in the Euclidean topology on $V$.
\footnote{In the case of $\F=\R$, the inclusion $\Sec{\lceil\overline{r}_{\Var{S}_\R}\rceil}{\Var{S}_\R} \subset \Pj V$ can be strict, as the closures in the Euclidean and Zariski topologies can be different, leading to several typical ranks, see, e.g., \cite{BBO2015,Blekherman2015,Comon2012,Landsberg2012}. It is nevertheless still true that the closure in the Zariski topology of $\Sec{r}{\Var{S}_\R}$ is $\Pj V$ for every typical rank $r$.}
Note that $\overline{r}_{\Var{S}_\R} = \overline{r}_{\Var{S}_\C}$ and that in $\F=\C$ there is only one typical rank, which is hence the generic rank.
For a nondefective variety $\Var{X}$, the generic element $[p] \in \Sec{r}{\Var{X}}$ with $r \le \overline{r}_\Var{X}$ has $\rank{[p]} = \rank{p} = r$, and, furthermore, it admits finitely many decompositions of the form 
\(
p = p_1 + \cdots + p_r
\)
with \(p_i \in \widehat{\Var{X}}\).
For $r > \overline{r}_\Var{X}$, it follows from a dimension count that the generic $[p] \in \Sec{r}{\Var{X}}$ admits infinitely many decompositions of the foregoing type, because the generic fiber of the projection map $\operatorname{Abs}\Sec{r}{\Var{X}} \to \Sec{r}{\Var{X}}$ has dimension $r(\dim \Var{X} + 1) - \dim V$. {This observation also holds for $r$-defective secant semi-algebraic sets where the generic fiber has a dimension equal to the defect.}

\subsection{Inclusions, projections, and flattenings}
Let $\vect{h} \party \vect{k} = \{1,2,\ldots,d\}$ with $\vect{h}$ and $\vect{k}$ of cardinality $s>0$ and $t>0$ respectively. Several criteria for identifiability rely on the natural inclusion into two-factor Segre varieties, namely
\[
 \Var{S} = \operatorname{Seg}(\Pj\F^{n_1} \times \cdots \times \Pj\F^{n_d}) \hookrightarrow \operatorname{Seg}\bigl( \Pj(\F^{n_{h_1}} \otimes \cdots \otimes \F^{n_{h_s}}) \times \Pj( \F^{n_{k_1}} \otimes \cdots \otimes \F^{n_{k_t}} ) \bigr) = \Var{S}_{\vect{h},\vect{k}},
\]
or the inclusion into three-factor Segre varieties, which can be defined analogously and for which we employ the notation $\Var{S}_{\vect{h},\vect{k},\vect{l}}$, where $\vect{h} \party \vect{k} \party \vect{l} = \{1,2,\ldots,d\}$.

Let $\vect{h}\subset\{1,2,\ldots,d\}$ be of cardinality $s>0$.
Define the projections 
\begin{align*}
 \pi_\vect{h} : \Var{S} = \operatorname{Seg}( \Pj\F^{n_1} \times \Pj\F^{n_2} \times \cdots \times \Pj\F^{n_d} ) &\to \operatorname{Seg}( \Pj\F^{n_{h_1}} \times \cdots \times \Pj\F^{n_{h_{s}}}) = \Var{S}_\vect{h} \\
 [\vect{a}_{1} \otimes \vect{a}_{2} \otimes \cdots \otimes \vect{a}_{d}] &\mapsto [\vect{a}_{h_1} \otimes \vect{a}_{h_2} \otimes \cdots \otimes \vect{a}_{h_{s}}].
\end{align*}
This definition can be extended to every rank-$r$ tensor in $\Sec{r}{\Var{S}}$ through linearity. 
We will abuse notation by writing $\pi_{\vect{h}}( p ) = \vect{a}_{h_1} \otimes \cdots \otimes \vect{a}_{h_{s}}$ if $p = \vect{a}_1 \otimes \cdots \otimes \vect{a}_d \in \widehat{\Var{S}}$.

Flattenings are defined as follows. Let $\vect{h} \party \vect{k} = \{1,2,\ldots,d\}$ with $\vect{h}$ and $\vect{k}$ of cardinality $s\ge0$ and $t\ge0$ respectively. Then, the $(\vect{h},\vect{k})$-flattening, or simply $\vect{h}$-flattening, of $p \in \widehat{\Var{S}}$ is the natural inclusion of $p \in \widehat{\Var{S}}$ into $\widehat{\Var{S}}_{\vect{h},\vect{k}}$:
\[
 p_{(\vect{h})} = \pi_{\vect{h}}(p) \pi_{\vect{k}}(p)^T \in \widehat{\Var{S}}_{\vect{h},\vect{k}} \subset \F^{n_{h_1} \cdots n_{h_{s}}} \otimes \F^{n_{k_1} \cdots n_{k_{t}}} \cong \F^{n_{h_1} \cdots n_{h_{s}} \times n_{k_1} \cdots n_{k_{t}}}.
\]
By convention $\pi_{\emptyset}(p) = 1$. 
A $(\vect{h},\vect{k})$-flattening of a rank-$r$ tensor is obtained by extending the above definition through linearity. 

\section{Generic identifiability of tensors and forms}\label{sec_identifiability}
{Let $\Var{X}$ be an irreducible algebraic $\F$-variety. By convention, we call a rank-$r$ decomposition $p = p_1 + \cdots + p_r$, $p_i \in \widehat{\Var{X}}$, distinct from another decomposition $p = q_1 + \cdots + q_r$, $q_i \in \widehat{\Var{X}}$, if there does not exist a permutation $\sigma$ of $\{1,2,\ldots,r\}$ such that $p_i = q_{\sigma_i}$ for all $i$. 
We say that $\Var{X}$ is \emph{generically $r$-identifiable} if the set of tensors with multiple distinct \emph{complex} decompositions in $\Sec{r}{\Var{X}}$ is contained in a proper Zariski-closed subset of $\Sec{r}{\Var{X}}$.} This concept is meaningful only when $r$ is \emph{subtypical}, i.e., $r < \overline{r}_\Var{X}$, or if the tensor space is \emph{perfect}, so that $r = \overline{r}_\Var{X}$ is an integer. The generic tensor $p \in \Sec{r}{\Var{X}}$ cannot admit a finite number of decompositions of length $r$ if $r > \overline{r}_\Var{X}$ because of the dimension argument mentioned in \refsec{sec_secant}.

The literature, specifically \cite{Mella2009,BCO2013,COV2014,COV2016,HOOS2015,GM2016}, already provides a conjecturally complete picture of complex generic $r$-identifiability of the tensor rank decomposition \refeqn{eqn_rank_decomposition} and the Waring decomposition \refeqn{eqn_waring_decomposition}.

\begin{theorem}[Chiantini, Ottaviani, and Vannieuwenhoven \cite{COV2016}] 
\label{thm_cov2016}
 Let $d \ge 3$, and let $\F = \C$ or $\R$. Let $\Var{V}_{d,n}^{\F}$ be the $d$th Veronese embedding of $\F^n$ in $\Pj S^d \F^n$. Then, $\Var{V}_{d,n}^{\F}$ is generically $r$-identifiable for all strictly subtypical ranks $r < n^{-1} \binom{n-1+d}{d}$, unless it is one of the following cases:
 \begin{enumerate}
  \item $\Var{V}_{3,6}^{\F}$ and $r = 9$; 
  \item $\Var{V}_{4,4}^{\F}$ and $r = 8$; or
  \item $\Var{V}_{6,3}^{\F}$ and $r = 9$.
 \end{enumerate}
 The generic tensor has $2$ distinct \emph{complex} decompositions in these exceptional cases.
\end{theorem}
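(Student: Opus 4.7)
The plan is to first reduce to the complex case: if a general real symmetric tensor in $S^d \R^n$ admitted two distinct real Waring decompositions of length $r$, then by inclusion each would also be a complex decomposition, contradicting generic $r$-identifiability over $\C$ on the dense open locus where the latter holds. Thus it suffices to work with $\F=\C$. I will then appeal to the criterion of Chiantini--Ciliberto: a nondefective irreducible projective variety $\Var{X} \subset \Pj V$ is generically $r$-identifiable provided $\Var{X}$ is not $r$-tangentially weakly defective, i.e., a general hyperplane containing the span of $\mathrm{T}_{p_1}\Var{X}, \ldots, \mathrm{T}_{p_r}\Var{X}$ for general $p_i \in \Var{X}$ is tangent to $\Var{X}$ only at those $r$ points. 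Via the apolarity pairing, for $\Var{X} = \Var{V}_{d,n}$ this translates into a Hilbert-function statement for a general union of $r$ two-fat points in $\Pj^{n-1}$, placing the problem squarely inside the theory of fat-point schemes.

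Next I would marshal the known non-defectivity input: the Alexander--Hirschowitz theorem rules out defectivity of $\Sec{r}{\Var{V}_{d,n}}$ outside an explicit finite list, which immediately excludes those from the identifiable range and leaves a clean target. For the (much finer) non-weak-defectivity statement, I plan a two-pronged attack. First, I would treat the low-dimensional and low-degree cases $(d,n)$ for which the subtypical range is short by direct computation: degenerate $r$ general two-fat points to a configuration supported on a rational normal curve or a lower-dimensional linear subspace, apply the Horace differential lemma, and read off the expected postulation in degree $d$. Second, I would set up a double induction on $(r,d)$ (or equivalently on $(r,n)$) using specialization of one point to a hyperplane, so that non-weak-defectivity for general configurations in degree $d-1$ and one lower value of $r$ implies it in degree $d$ at rank $r$; the residual scheme after specialization is handled by the inductive hypothesis and a Castelnuovo-type sequence.

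The exceptional cases $\Var{V}_{3,6}, \Var{V}_{4,4}, \Var{V}_{6,3}$ at the perfect rank $r=\Gamma/n$ require separate treatment, since here generic identifiability actually fails with exactly two decompositions. For these I would exhibit the two decompositions explicitly via classical projective geometry (e.g., a $\mathbb{Z}/2$ monodromy on the incidence variety $\operatorname{Abs}\Sec{r}{\Var{V}_{d,n}} \to \Sec{r}{\Var{V}_{d,n}}$ realized by an involution on the family of apolar schemes), and then show that the generic fibre has \emph{exactly} cardinality two by bounding the Hilbert function of the $r$-dimensional apolar ideal and invoking a Cayley--Bacharach type argument. A monodromy computation, possibly done numerically and then certified symbolically, pins the degree of the cover at $2$.

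The main obstacle will be the inductive step ensuring non-weak-defectivity throughout the infinite two-parameter family: the Horace-type specializations must be chosen delicately so that the residual schemes remain generic enough to invoke the inductive hypothesis while avoiding the three sporadic pathologies, and one must argue uniformly that no additional exceptional $(d,n,r)$ escapes notice. Ruling out hidden coincidences---proving that the three listed cases are the \emph{only} exceptions---is the technical heart of the argument, and would likely require combining the degeneration machinery with a computer-assisted verification for all cases up to some explicit bound, plus a stabilization argument that propagates non-weak-defectivity to all larger $(d,n)$ once the finite checklist is clear.
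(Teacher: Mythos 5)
Your proposal and the paper part ways at the very first step, and that is where the concrete gap lies. The paper does not reprove the complex statement at all: for $\F=\C$ the theorem is exactly the main result of \cite{COV2016} and is simply cited; the only argument the paper supplies is the extension to $\F=\R$ in Remark \ref{uno}. Your real-to-complex reduction is stated too quickly to serve even that purpose. You argue that two distinct real decompositions of a \emph{general} real tensor of real rank $r$ would contradict complex generic $r$-identifiability ``on the dense open locus where the latter holds,'' but a general point of the real semi-algebraic set $\Sec{r}{\Var{V}_\R}$ is not automatically a point of that dense open locus: a priori the real rank-$r$ locus could sit inside the proper Zariski-closed subset $\Var{U}\subset\Sec{r}{\Var{V}_\C}$ where complex identifiability fails. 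What must be shown is that the real rank-$r$ points are Zariski-dense in $\Sec{r}{\Var{V}_\C}$ (equivalently, that the relevant Nash strata of $\Sec{r}{\Var{V}_\R}$ have real dimension equal to the complex dimension of $\Sec{r}{\Var{V}_\C}$). The paper does precisely this, invoking \cite{QCL2016} to see that $\Sec{r}{\Var{V}_\R}$ is not contained in the singular locus, then applying Terracini's lemma at a smooth real point with a real decomposition to get the dimension equality, and only then concluding that $\Var{U}$ meets each stratum in a proper closed subset. Your proposal is silent on this density step, which is the entire content of the paper's contribution to this theorem.

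For the complex case itself, your sketch (Chiantini--Ciliberto weak-defectivity criterion, apolarity and two-fat points, Alexander--Hirschowitz, Horace-type double induction, monodromy for the three exceptional cases) is a reasonable description of the kind of machinery used in \cite{COV2016} and its antecedents, but as written it is a program rather than a proof. The steps you yourself flag as the obstacles --- choosing the specializations so the residual schemes stay general, proving that no $(d,n,r)$ outside the three listed cases escapes, and certifying that the generic fibre in the exceptional cases has cardinality exactly $2$ --- are the technical heart of that (long, partly computer-assisted) paper and are not supplied here. So the proposal neither reproduces the paper's actual argument (citation plus the real-density remark) nor closes the complex case on its own; the missing density argument over $\R$ and the unexecuted inductive/exceptional-case analysis over $\C$ are genuine gaps.
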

\begin{remark}\label{uno}
This theorem was proved for $\F=\C$ in \cite{COV2016}. It can be extended to $\F=\R$ by invoking a beautiful result due to Qi, Comon, and Lim \cite{QCL2016}. 
Arguing from the abstract $r$-secant variety, the results of \cite[Section 5]{QCL2016} entail that $\Sec{r}{\Var{S}_\R}$ is not contained in the singular locus of $\Sec{r}{\Var{S}_\C}$, which has codimension at least $1$. Let the Nash stratification \cite{BCR1998} of the semi-algebraic set $\Sec{r}{\Var{S}_\R}$ be given by $\Sec{r}{\Var{S}_\R} = \cup_{i=1}^k \Var{N}_i$, with $\Var{N}_i$ a Nash manifold and $\Var{N}_i \cap \Var{N}_j = \emptyset$ if $i \ne j$. Let $\Var{N}_i$ be an arbitrary Nash manifold not contained in $\operatorname{Sing}(\Sec{r}{\Var{S}_\C})$, and let $[p] \in \Var{N}_i \setminus \operatorname{Sing}(\Sec{r}{\Var{S}_\C})$ be generic so it has a decomposition $p = p_1 + \cdots + p_r$, where $p_i \in \widehat{\Var{S}}_\R$. Since $p$ is smooth in $\Sec{r}{\Var{S}_\C}$ its tangent space is $\Tang{p}{\Sec{r}{\Var{S}_\C}} = \langle \Tang{p_1}{\Var{S}_\C}, \ldots, \Tang{p_r}{\Var{S}_\C} \rangle = \langle \Tang{p_1}{\Var{S}_\R} \otimes \C, \ldots, \Tang{p_r}{\Var{S}_\R}\otimes\C \rangle = \Tang{p}{\Var{S}_\R} \otimes \C$ by Terracini's lemma. Hence, as complex varieties, $\dim \Var{N}_i = \dim \Sec{r}{\Var{S}_\C}$. Let $\Var{U} \subset \Sec{r}{\Var{S}_\C}$ be the locus where complex $r$-identifiability fails, which is of codimension at least $1$ by Theorem 1.1 of \cite{COV2016}. Then $\Var{N}_i \cap \Var{U}$ is contained in a proper Zariski-closed set of $\Var{N}_i \subset \Sec{r}{\Var{S}_\C}$, namely $\Var{U}$,
proving that $\Sec{r}{\Var{S}_\R}$ is generically $r$-identifiable if $\Sec{r}{\Var{S}_\C}$ is generically $r$-identifiable.

If complex $r$-identifiability fails on a Zariski-open set, {then there is a Euclidean-open set of real decomposition of real rank $r$ admitting multiple complex decompositions}, however we do not presently know how many of these are \emph{real}. We leave this as an open problem warranting further research.
\end{remark}

This theorem completely settles the question concerning the number of complex Waring decompositions \refeqn{eqn_waring_decomposition} of the generic symmetric tensor of strictly subtypical rank $r < \overline{r}_\Var{V}$: aside from the listed exceptions, it is one. In the perfect case where $\overline{r}_\Var{V}$ is an integer and $\F=\C$, the following remarkable result was recently proved.
\begin{theorem}[Galuppi and Mella \cite{GM2016}] \label{thm_perfect}
 Let $d \ge 3$. Let $\Var{V}_{d,n}$ be the $d$th Veronese embedding of $\C^n$ in $\Pj S^d \C^n$ and assume that $\overline{r}_\Var{V} = n^{-1} \binom{n-1+d}{d}$ is an integer. {$\Var{V}_{d,n}$ is generically $\overline{r}_{\Var{V}_{d,n}}$-identifiable if and only if it is either $\Var{V}_{2k+1,2}$ with $k\ge1$, $\Var{V}_{3,4}$ or $\Var{V}_{5,3}$.}
\end{theorem}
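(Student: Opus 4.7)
The plan is to separate the two implications after first enumerating the pairs $(d,n)$ with $d \ge 3$ for which $\overline{r}_{\Var{V}_{d,n}} = \binom{n+d-1}{d}/n$ is an integer. This divisibility constraint yields the infinite family $(d,n)=(2k+1,2)$ with $k \ge 1$ (where $\binom{2k+2}{2k+1}/2 = k+1$), together with infinitely many further pairs in $n \ge 3$ (for $d=3$, all $n \not\equiv 0 \pmod 3$), among which $(3,4)$ and $(5,3)$ appear. Exactly three of these are claimed to yield identifiability.

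For the ``if'' direction, generic $\overline{r}$-identifiability is established by apolarity / Hilbert-function arguments in the spirit of \refsec{sec_symmetric}. For a generic binary form $f \in S^{2k+1}\C^2$, the apolar algebra has a symmetric Hilbert series whose degree-$(k+1)$ component has $(f^\perp)_{k+1}$ one-dimensional; its generator is a binary form with $k+1$ distinct roots, and the apolarity lemma yields the unique Waring decomposition. For $\Var{V}_{3,4}$ at rank $5$, a direct computation shows the apolar algebra has Hilbert series $(1,4,4,1)$, so $(f^\perp)_{2}$ is a six-dimensional system of quadrics in four variables cutting out a length-$5$ scheme, which is Sylvester's pentahedral theorem. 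An analogous argument using the Hilbert series $(1,3,6,6,3,1)$ and the four-dimensional cubic system $(f^\perp)_{3}$ settles $\Var{V}_{5,3}$ at rank $7$.

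For the ``only if'' direction I would invoke Mella's reformulation of perfect-rank identifiability, which interprets the failure of identifiability as the existence of a Cremona-type birational self-map of $\Pj\F^n$ whose base scheme produces a second Waring decomposition of the same form. Concretely, two distinct decompositions $\{[L_1^d],\ldots,[L_{\overline{r}}^d]\}$ and $\{[M_1^d],\ldots,[M_{\overline{r}}^d]\}$ of a common $f$ give rise to a rational map $\Pj\F^n \dashrightarrow \Pj\F^n$ whose base locus interpolates between the two point configurations. For each excluded pair $(d,n)$ one would exhibit such a Cremona transformation explicitly, leveraging the richness of $\operatorname{Cr}(\Pj\F^n)$ for $n \ge 3$; for $n=2$ no such map can distort the zero locus of a principal apolar ideal, consistent with identifiability for all odd binary degrees.

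The hard part is the ``only if'' direction, which must handle infinitely many pairs in a uniform way. The substantive difficulty lies in the classification of admissible Cremona transformations of $\Pj\F^n$ whose degree and base-scheme invariants are compatible with the perfect-rank equality $n\overline{r} = \binom{n+d-1}{d}$; exhibiting explicit Cremona maps in all non-exceptional cases while simultaneously proving no such map exists for $(2k+1,2)$, $(3,4)$ and $(5,3)$ is the essential content of Galuppi and Mella's argument. Pure dimension counting is insufficient because the perfect-rank condition equalizes the dimensions of $\operatorname{Abs}\Sec{\overline{r}}{\Var{V}_{d,n}}$ and $\Sec{\overline{r}}{\Var{V}_{d,n}}$, so one must control the actual \emph{degree} of the projection from the abstract secant variety, which is where the arithmetic of the Cremona invariants enters decisively.
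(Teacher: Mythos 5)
You should first note that the paper does not prove \refthm{thm_perfect} at all: it is quoted verbatim from \cite{GM2016}, so your proposal has to stand as a self-contained argument, and as such it has a genuine gap in the ``only if'' direction, which is the actual content of the theorem. That direction requires showing that for \emph{every} perfect pair $(d,n)$ outside the three listed families the \emph{general} form of rank $\overline{r}_{\Var{V}_{d,n}}$ admits at least two decompositions. Your proposal names a strategy (a link with Cremona transformations and a classification of their numerical invariants) and then concedes that carrying it out ``is the essential content of Galuppi and Mella's argument''; that is a pointer to the literature, not a proof. Moreover the logical direction of the Cremona link is stated backwards: in \cite{GM2016}, building on \cite{Mella2006,Mella2009}, it is \emph{identifiability} of the general perfect-rank form that forces a suitable linear system of degree-$d$ hypersurfaces singular at $\overline{r}-1$ general points to induce a birational self-map of $\Pj\C^{n}$ (up to projectivization), and non-identifiability is then deduced by excluding such maps through Noether--Fano-type constraints. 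Exhibiting an explicit Cremona transformation ``for each excluded pair,'' as you propose, would at best produce \emph{special} forms with two decompositions, which says nothing about the general form; and in the three identifiable cases the associated birational maps do exist, rather than being absent as your last sentence about $n=2$ suggests.

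The ``if'' half also needs repair, except for binary forms: there your argument is correct, since for generic $f\in S^{2k+1}\C^2$ the space $(f^\perp)_{k+1}$ is one-dimensional with squarefree generator, and apolarity gives uniqueness. For $\Var{V}_{3,4}$, however, $(f^\perp)_2$ has dimension $10-4=6$ while the quadrics through the five points $Z$ of a decomposition form only the $5$-dimensional subspace $(I_Z)_2$; hence the apolar quadrics do \emph{not} cut out a length-$5$ scheme (their base locus is strictly contained in $Z$), and Sylvester's pentahedral theorem does not follow from the Hilbert series $(1,4,4,1)$ alone---classically one needs the Hessian, or a finer apolarity/contact-locus argument. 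The same mismatch occurs for $\Var{V}_{5,3}$: $(f^\perp)_3$ has dimension $4$ whereas $(I_Z)_3$ has dimension $3$ for the seven points, so the series $(1,3,6,6,3,1)$ by itself does not yield uniqueness; this is the classical Hilbert--Richmond--Palatini theorem and requires its own proof. In short, one of your three positive cases is complete, two are asserted on the basis of an incorrect apolarity claim, and the infinite negative part of the classification is missing.
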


In summary we can state that the generic symmetric tensor k {of rank $r$} in all but a few tensor spaces $S^d \F^n$ admits a unique Waring decomposition over $\F$ if $r$ is subtypical, while it is expected to admit several decompositions if {$r \ge \overline{r}_{\Var{V}_{d,n}}$}.

The theory of generic identifiability of the Segre variety is less developed than the Veronese variety. Because of the corroborating evidence in \cite{CO2012,BCO2013,COV2014,DDL2015,HOOS2015,Strassen1983}, the following conjectures are believed to be true.
\begin{conjecture}
\label{conj_idf}
 Let $d \ge 3$, and let $n_1 \ge \cdots \ge n_d \ge 2$. Let $\Var{S}_\F = \operatorname{Seg}(\Pj\F^{n_1} \times \cdots \times \Pj\F^{n_d})$ be the Segre variety in $\Pj(\F^{n_1} \otimes \cdots \otimes \F^{n_d})$. Then, $\Var{S}_\F$ is generically $r$-identifiable for all strictly subtypical ranks $r < \overline{r}_{\Var{S}_\F}$, unless it is one of the 
 following cases:
 \begin{enumerate}
  \item $n_1 > \prod_{k=2}^d n_k - \sum_{k=2}^d (n_k - 1)$ and $r \ge \prod_{k=2}^d n_k - \sum_{k=2}^d (n_k - 1)$;
  \item $\Var{S} = \operatorname{Seg}(\Pj\F^{4} \times \Pj\F^{4} \times \Pj\F^3)$ and $r = 5$;
  \item $\Var{S} = \operatorname{Seg}(\Pj\F^{n} \times \Pj\F^n \times \Pj\F^2 \times \Pj\F^2)$ and $r = 2n-1$;
  \item $\Var{S} = \operatorname{Seg}(\Pj\F^{4} \times \Pj\F^{4} \times \Pj\F^4)$ and $r = 6$;
  \item $\Var{S} = \operatorname{Seg}(\Pj\F^{6} \times \Pj\F^{6} \times \Pj\F^3)$ and $r = 8$; or
  \item $\Var{S} = \operatorname{Seg}(\Pj\F^{2} \times \Pj\F^{2} \times \Pj\F^2 \times \Pj\F^2 \times \Pj\F^2)$ and $r = 5$;
 \end{enumerate}
 The first three cases generically admit {infinitely many} decompositions \cite{AOP2009,BCO2013}. Case (4) generically admits $2$ \emph{complex} decompositions \cite{CO2012}, case (5) is expected\footnote{This statement is true with probability $1$ due to \cite[Proposition 4.1]{HOOS2015} and \cite[Section 5.1]{HR2015}.} to generically admit $6$ \emph{complex} decompositions, and case (6) admits generically $2$ \emph{complex} decompositions \cite{BC2013}.
\end{conjecture}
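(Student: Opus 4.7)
Since \refconj{conj_idf} is currently open, my ``plan'' is really a research program that would mirror the completed Veronese treatment in \cite{COV2016}. The non-identifiability of cases (1)--(3) follows from secant defectivity of \cite{AOP2009,BCO2013}: these formats admit positive-dimensional families of decompositions. Cases (4) and (6) are settled by the explicit constructions in \cite{CO2012,BC2013}; case (5) currently has only numerical evidence \cite{HOOS2015,HR2015}, so to close it one would need an explicit construction of six decompositions of a specific rank-$8$ tensor in $\F^6 \otimes \F^6 \otimes \F^3$, together with a monodromy argument on the abstract secant variety to propagate this multiplicity to a Zariski-open set.

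The positive direction---generic identifiability at every non-exceptional subtypical $r$---is the substantive task. The plan is to apply the Chiantini--Ciliberto criterion as refined in \cite{COV2014,COV2016}: $\Var{S}_\C$ is generically $r$-identifiable once one proves that it is not $r$-weakly defective and that the contact locus of a generic hyperplane tangent at $r$ general points is zero-dimensional. Both hypotheses are, by Terracini's lemma, linear-algebraic rank statements on a matrix assembled from the tangent spaces at $r$ general points of $\Var{S}_\C$. I would proceed by an induction on the pair $(d, n_1 + \cdots + n_d)$, using the tangential projections of \cite{BCO2013} to peel off one factor at a time and thereby reduce non-weak-defectivity in format $(n_1,\ldots,n_d)$ at rank $r$ to the analogous statement in a smaller format at some $r' < r$. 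The base cases---finitely many small formats---would be discharged by a finite symbolic rank computation, in the same spirit as what \cite{COV2014} automated for $d = 3$.

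The main obstacle, and the reason the conjecture remains open, is the regime where $r$ is close to $\overline{r}_{\Var{S}_\F}$. Near the typical rank the contact locus can acquire positive-dimensional components even when the secant variety itself has the expected dimension, and the sporadic exceptions (2), (4), (5) and (6) show that such components really do appear in formats with no obvious distinguishing numerical invariant. Ruling them out appears to require a case-by-case Horace-style degeneration (specialize the $r$ general points to a configuration whose ideal sheaf has controlled cohomology and invoke semicontinuity), in analogy with the method of \cite{COV2016}. A secondary difficulty is that the tangential projection of step (i) does not preserve the Segre structure when $d \geq 4$---unlike the Veronese case, where it does---so either step (i) must be reformulated inside the larger category of products of projective spaces, or it must be bypassed by a direct Terracini computation at each $d$. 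Once the complex case is established, the real case of the conjecture follows mutatis mutandis from the Nash-stratification argument already recorded in \refrem{uno}.
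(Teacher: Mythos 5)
The statement you were asked about is a \emph{conjecture}, and the paper does not prove it --- nor does it claim to. It records the statement as Conjecture \refref{conj_idf}, cites corroborating literature, and in the remark immediately following notes only a partial result: Theorem 1.1 of \cite{COV2014} verifies the conjecture for all formats with $n_1 n_2\cdots n_d\le 15000$ over $\C$, and this is extended to $\R$ by the Qi--Comon--Lim argument recorded in \refrem{uno}. You correctly recognized that the statement is open and offered a research program rather than a proof, so there is no proof-to-proof comparison to make; your proposal cannot be judged as a proof of the statement, and it does not purport to be one. Against that benchmark your sketch is reasonable and consistent with how the evidence in the literature was actually produced: the positive direction is indeed attacked through non-weak-defectivity and tangential contact loci via Terracini's lemma, with the balanced range handled in \cite{BCO2013} by tangential-projection inductions and the near-generic range in \cite{COV2014} by (finite, computer-assisted) Hessian-type rank checks rather than by a uniform Horace degeneration, which so far nobody has carried out for Segre varieties in the regime close to $\overline{r}_{\Var{S}}$.

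Two small corrections to your account of the exceptional cases. First, the failure in case (1) (the unbalanced regime) and in part of the literature's treatment of cases (2)--(3) is not always literal secant defectivity: in the unbalanced case the secant variety can have the expected dimension (or fill the ambient space) while the generic fiber of the projection $\operatorname{Abs}\Sec{r}{\Var{S}}\to\Sec{r}{\Var{S}}$ is positive-dimensional, which is the actual source of the infinitely many decompositions invoked from \cite[Proposition 8.2]{BCO2013}; your phrasing conflates the two mechanisms. Second, if you wanted to strengthen the evidential part of your program you should have cited the paper's own best partial result, namely the verification up to $n_1\cdots n_d\le 15000$ in \cite{COV2014} together with the real extension via \cite{QCL2016}, since that --- and not the exceptional-case constructions --- is what currently carries most of the weight behind the conjecture.
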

\begin{remark}
The conjecture was initially stated for $\F=\C$ in \cite{BCO2013,COV2014}. Theorem 1.1 of \cite{COV2014}, which proves \refconj{conj_idf} for all $n_1n_2\cdots n_d \le 15000$ with $\F=\C$, can be extended to $\F=\R$ as in \refrem{uno} by invoking Qi, Comon, and Lim's analysis \cite{QCL2016}. 
\end{remark}

\begin{conjecture}[Hauenstein, Oeding, Ottaviani, and Sommese \cite{HOOS2015}]
Let $d \ge 3$, and let $n_1 \ge n_2 \ge \cdots \ge n_d \ge 2$. Let $\Var{S} = \operatorname{Seg}(\Pj\C^{n_1} \times \Pj\C^{n_2} \times \cdots \times \Pj\C^{n_d})$ be the Segre variety in $\Pj(\C^{n_1} \otimes \C^{n_2} \otimes \cdots \otimes \C^{n_d})$, and assume that $\overline{r}_\Var{S}$ is an integer. Then, $\Var{S}$ is \emph{not} generically $\overline{r}_\Var{S}$-identifiable, unless it is one of the following cases:
\begin{enumerate}
  \item $\Var{S} = \operatorname{Seg}(\Pj\C^{5} \times \Pj\C^{4} \times \Pj\C^3)$, or
  \item $\Var{S} = \operatorname{Seg}(\Pj\C^{3} \times \Pj\C^2 \times \Pj\C^2 \times \Pj\C^2)$.
\end{enumerate}
\end{conjecture}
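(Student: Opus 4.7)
The conjecture splits naturally into two opposite claims: a positive identifiability assertion for the two exceptional formats, and a negative (non-identifiability) assertion for every other nondefective Segre variety with integer $\overline{r}_\Var{S}$. The plan is to handle them separately, with the negative half being the main source of difficulty.

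For the positive half, the task is to certify generic $\overline{r}_\Var{S}$-identifiability of $\operatorname{Seg}(\Pj\C^5 \times \Pj\C^4 \times \Pj\C^3)$ at $r=6$ and of $\operatorname{Seg}(\Pj\C^3 \times \Pj\C^2 \times \Pj\C^2 \times \Pj\C^2)$ at $r=4$. The natural first attempt is the reshaped Kruskal criterion of \refsec{sec_reshaped_kruskal}, but an enumeration of three-factor reshapings shows that the Kruskal sum tops out at $5+4+3=12$ in the first case and $3+2+4=9$ in the second, both strictly below $2r+2$. The fallback is therefore the Hilbert-function / weak-defectivity approach developed in \refsec{sec_first_effective}: produce one explicit decomposition $p=p_1+\cdots+p_r$ whose associated zero-dimensional scheme $\{p_1,\ldots,p_r\}$ imposes independent conditions in an appropriately chosen degree, so that the Chiantini--Ciliberto $(r-1)$-weak nondefectivity criterion certifies identifiability at $p$ and, by upper semicontinuity of the Hilbert function, propagates to a Zariski-dense open subset of $\Sec{r}{\Var{S}}$.

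For the negative half, the relevant invariant is the degree of the abstract secant map $\phi : \operatorname{Abs}\Sec{r}{\Var{S}} \to \Sec{r}{\Var{S}}$ at $r=\overline{r}_\Var{S}$. When $\Var{S}$ is defective at this rank, such as in the unbalanced family of case (1) of \refconj{conj_idf}, the fibres of $\phi$ have positive dimension and infinitely many decompositions occur, immediately contradicting identifiability. Otherwise $\phi$ is generically finite; since each decomposition contributes $r!$ points to the fibre, $\deg\phi$ is a positive multiple of $r!$ and generic non-identifiability is equivalent to $\deg\phi > r!$. The plan is to combine (i) a Terracini-type tangential-projection construction that manufactures a second decomposition from a generic one whenever a residual linear system fails to be empty, (ii) the numerical monodromy/homotopy-continuation techniques of \cite{HOOS2015} to dispose of finitely many explicit base formats, and (iii) an inductive reduction of nondefective unbalanced formats to smaller ones via a hyperplane section on the largest factor. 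The principal obstacle, and the reason the statement remains a conjecture, is producing a symbolic lower bound on $\deg\phi$ that is uniform over the infinite family of formats; the projective symmetries inherited from $\operatorname{GL}(n_1)\times\cdots\times\operatorname{GL}(n_d)$ together with the monodromy action on the fibre of $\phi$ seem the most promising structural input for such a bound.
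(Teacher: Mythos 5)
The statement you have been asked to prove is stated in the paper as a \emph{conjecture} (attributed to \cite{HOOS2015}); the paper offers no proof of it, and none is known, so there is no argument to compare yours against. More importantly, your proposal is a research plan rather than a proof, and you concede the decisive gap yourself: for the negative half---which is the entire conjectural content, since the statement only asserts non-identifiability of every perfect format other than the two listed exceptions---you never produce the required bound $\deg\phi > r!$ for the generic perfect format. The tools you list cannot close this gap: monodromy and homotopy continuation, as used in \cite{HOOS2015}, give (numerical) counts of decompositions only for finitely many explicitly chosen formats, whereas the conjecture concerns an infinite family and needs a uniform symbolic argument. Your inductive step (iii) is also not expected to work as stated: cutting the largest factor by a hyperplane changes $\Pi/(\Sigma+1)$, so it neither preserves perfection nor the generic rank, and non-identifiability does not simply propagate from the smaller format to the larger one.

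Concerning your positive half: your diagnosis that the (reshaped) Kruskal criterion falls short (Kruskal sums $12<14$ for $\operatorname{Seg}(\Pj\C^5\times\Pj\C^4\times\Pj\C^3)$ at $r=6$ and $9<10$ for $\operatorname{Seg}(\Pj\C^3\times\Pj\C^2\times\Pj\C^2\times\Pj\C^2)$ at $r=4$) is correct, but note first that identifiability of these two cases is not what the conjecture asserts---it is already established in \cite{HOOS2015}, and the conjecture merely excludes them. Second, the remedy you propose is not available as written: the Hilbert-function machinery of \refsec{sec_first_effective} (Cayley--Bacharach, $h$-vectors) is developed for finite sets of points in $\Pj\C^4$ re-embedded by Veronese maps and yields a criterion for \emph{specific} symmetric identifiability in $S^4\C^4$; it does not transfer to generic identifiability of these Segre formats, and invoking the Chiantini--Ciliberto weak-nondefectivity criterion would require proving non-weak-defectivity at the perfect rank, which is precisely the hard content and is nowhere established in your plan. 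In short, both halves of your proposal remain open, consistent with the statement being an open conjecture.
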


\section{An effective criterion for specific identifiability}\label{sec_reshaped_kruskal}
We formalize the concept of an effective criterion for specific identifiability.

\begin{definition}
Let $\Var{X} \subset \Pj V$ be a generically $r$-identifiable variety. A criterion for specific $r$-identifiability of $\Var{X}$ is called \emph{effective} if it certifies identifiability on a dense subset of $\Sec{r}{\Var{X}}$ in the Euclidean topology. 
\end{definition}

Thus, if we consider a probability distribution with noncompact support on the affine cone of a generically $r$-identifiable variety $\Var{X}$, then the probability that an effective criterion for specific $r$-identifiability fails to certify identifiability of $p = p_1 + \cdots + p_r$ is zero when the $p_i$'s were randomly sampled from the probability distribution on $\widehat{\Var{X}}$. 

\subsection{The reshaped Kruskal criterion}
We show that Kruskal's criterion \cite{Kruskal1977} is effective when combined with reshaping. 
The key to this criterion is the notion of \emph{general linear position} (GLP) \cite{Landsberg2012}. A set of points $S = \{p_1, p_2, \ldots, p_r\} \subset \Pj V$ is in GLP if for $s = \min\{r, \dim V\}$, the subspace spanned by every subset $R \subset S$ of cardinality $s$ is of the maximal dimension $s-1$.
This means that no 2 points coincide, no $3$ points are on a line, no $4$ points are on a plane, and so forth. 
The \emph{Kruskal rank} of a finite set of points $S \subset \Pj V$ is then the largest value $\kappa$ for which every subset of $\kappa$ points of $S$ is in GLP.

Let $p_i = \sten{a}{i}{1} \otimes \cdots \otimes \sten{a}{i}{d}$, $i=1,\ldots,r$, be a collection of $r$ points in $\widehat{\Var{S}}$. Then we denote the \emph{factor matrices} of the points $p_i$ by 
\[
 A_k 
= \begin{bmatrix} \sten{a}{1}{k} & \sten{a}{2}{k} & \cdots & \sten{a}{r}{k} \end{bmatrix} 
= \begin{bmatrix} \pi_{\{k\}}(p_1) & \pi_{\{k\}}(p_2) & \cdots & \pi_{\{k\}}(p_r) \end{bmatrix} \in \F^{n_k \times r}
\]
for $k=1,2,\ldots,d$.
Letting $\vect{h} \subset \{1,2,\ldots,d\}$ be an ordered set, we define for brevity
\[
A_{\vect{h}} 
= A_{h_1} \odot A_{h_2} \odot \cdots \odot A_{h_{|\vect{h}|}}
= \begin{bmatrix} \pi_{\vect{h}}(p_1) & \pi_{\vect{h}}(p_2) & \cdots & \pi_{\vect{h}}(p_r) \end{bmatrix}.
\]
Kruskal's criterion for specific identifiability may then be formulated as follows.

\begin{proposition}[Kruskal's criterion \cite{Kruskal1977}] \label{prop_kruskal}
 Let $\Var{S} = \operatorname{Seg}( \Pj\F^{n_1} \times \Pj\F^{n_2} \times \Pj\F^{n_3} )$ 
with $n_1 \ge n_2 \ge n_3 \ge 2$. Let $p \in \langle p_1, p_2, \ldots, p_r\rangle$ with $p_i = 
\sten{a}{i}{1} \otimes \cdots \otimes \sten{a}{i}{d} \in \widehat{\Var{S}}$. Let $\kappa_i$ denote the Kruskal rank of 
the factor matrices $A_1$, $A_2$ and $A_3$ respectively.
Then, $p$ is $r$-identifiable if 
\(
 r \le \frac{1}{2} (\kappa_1 + \kappa_2 + \kappa_3) - 1.
\)
 Furthermore, this criterion is effective if
 \(
  r \le \frac{1}{2}( \min\{n_1,r\} + \min\{n_2,r\} + \min\{n_3,r\} ) - 1,
 \)
 or, equivalently, letting $\delta = n_2 + n_3 - n_1 - 2$,
 \[
  r \le n_1 + \min\{ \tfrac{1}{2} \delta, \delta \};
 \]
 this is the maximum range of applicability of Kruskal's criterion.
\end{proposition}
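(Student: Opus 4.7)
The first assertion is Kruskal's 1977 theorem, which I would invoke directly without reproving. The substantive part is the effectiveness claim, and my plan for it proceeds in three stages.

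First, I would show that on a dense Euclidean-open subset of the product $\Var{S}^{\times r}$, the factor matrices $A_1,A_2,A_3$ simultaneously attain their maximal possible Kruskal ranks $\kappa_k=\min\{n_k,r\}$. This is a standard genericity argument: the condition $\kappa_k\ge s$ amounts to the nonvanishing of every $s\times s$ minor of every $s$-column submatrix of $A_k$, so the failure locus is cut out by polynomial equations in the entries of the $r$-tuple. To see that these polynomials do not vanish identically on $\Var{S}^{\times r}$, I would exhibit a single explicit $r$-tuple of rank-one points whose factor matrices contain a nonvanishing minor of every required size (for instance, choosing the $k$-th factors so that the first $\min\{n_k,r\}$ columns of $A_k$ form a generic submatrix, with the remaining columns generic linear combinations thereof). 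Since the failure locus is then a proper Zariski-closed subset of $\Var{S}^{\times r}$, its complement is Euclidean-dense and open.

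Second, I would transfer this genericity statement to $\Sec{r}{\Var{S}}$ via the projection $\pi:\operatorname{Abs}\Sec{r}{\Var{S}}\to\Sec{r}{\Var{S}}$. This map is dominant (in fact, generically finite in the $r$-identifiable regime), and by Tarski--Seidenberg in the real case (respectively by standard properties of dominant morphisms in the complex case) the image of a Euclidean-dense open subset of the source contains a Euclidean-dense open subset of the target. Hence for a generic $p\in\Sec{r}{\Var{S}}$ there exists a decomposition $p=p_1+\cdots+p_r$ whose factor matrices satisfy $\kappa_k=\min\{n_k,r\}$, and substituting these values into the inequality from the first assertion yields the effectiveness range $r\le\tfrac{1}{2}(\min\{n_1,r\}+\min\{n_2,r\}+\min\{n_3,r\})-1$.

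Third, I would verify the equivalence with $r\le n_1+\min\{\delta/2,\delta\}$ by a short case split on the sign of $\delta=n_2+n_3-n_1-2$. When $\delta\ge 0$ the binding regime is $r>n_1$, where the bound becomes $r\le(n_1+n_2+n_3)/2-1=n_1+\delta/2$; when $\delta<0$ the feasible range is confined to $r\le n_1$, and with $\kappa_1=r$, $\kappa_2=n_2$, $\kappa_3=n_3$ the bound reduces to $r\le n_2+n_3-2=n_1+\delta$. Maximality of the range is then immediate, since $\kappa_k\le\min\{n_k,r\}$ in all cases, so no larger value of $r$ can ever be certified by Kruskal's criterion. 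The only real subtlety I anticipate is the transfer step in the $\F=\R$ setting, where one must be careful that ``dense'' is read in the Euclidean topology throughout; this is handled cleanly by working within the Nash stratification of $\Sec{r}{\Var{S}_\R}$, much as in \refrem{uno}.
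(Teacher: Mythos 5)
Your proposal is correct and follows essentially the same route as the paper: the paper's proof simply invokes \reflem{lem_max_glp}, whose singleton case ($\vect{h}=\{k\}$) is exactly your stage-one statement that the factor matrices generically attain Kruskal ranks $\min\{n_k,r\}$, and the transfer to a dense subset of $\Sec{r}{\Var{S}}$ together with the case split on $\delta$ is left implicit there. Your explicit handling of the arithmetic equivalence and of the real (Euclidean/semialgebraic) transfer is a harmless elaboration of the same argument.
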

\begin{proof}
Effectiveness was not considered in \cite{Kruskal1977}, but its proof is a consequence of 
\reflem{lem_max_glp} that will be presented shortly.
\end{proof}

\begin{remark}
While effectiveness of Kruskal's criterion is known to the experts, it is not obvious why this should have been expected. The reason is that Kruskal's criterion is not merely certifying the uniqueness of \emph{one} decomposition
\begin{align}\label{eqn_p_def}
 p = p_1 + \cdots + p_r = \sum_{i=1}^r \sten{a}{i}{1} \otimes \cdots \otimes \sten{a}{i}{d}, 
\end{align}
with $p_i \in \Var{S}_0$ and $\sten{a}{i}{k} \in \F^{n_k}$, but rather it is testing whether \emph{all} tensors 
\(p = \alpha_1 p_1 + \alpha_2 p_2 + \cdots + \alpha_r p_r,\) $\alpha_i \in \F_0$
are $r$-identifiable. 
Indeed, the Kruskal rank of a set of points is a \emph{projective} property: the Kruskal ranks of $\{[p_1],\ldots,[p_r]\}$ and $\{p_1,\ldots,p_r\}$ with $[p_i]\in\Var{S}$ are the same. This also means that Kruskal's test fails as soon as there exists one point $q = \alpha_1 p_1 + \alpha_2 p_2 + \cdots + \alpha_r p_r$, $\alpha_i \in \F_0$, that is not identifiable. Since all points $q = \alpha_1 p_1 + \alpha_2 p_2 + \cdots + \alpha_r p_r$ with some $\alpha_i=0$ are of rank at most $r-1$ and thus not $r$-identifiable, one could say that the $r$-secant plane $\langle p_1, p_2, \ldots, p_r \rangle$, $p_i \in \Var{S}_0$, is $r$-identifiable if and only if all elements of $\{ \alpha_1 p_1 + \alpha_2 p_2 + \cdots + \alpha_r p_r \;|\; \alpha_i \in \F_0 \}$ are $r$-identifiable. Kruskal's criterion is thus a criterion for checking that the $r$-secant plane $\langle p_1, p_2, \ldots, p_r \rangle$ is $r$-identifiable, when a particular tensor rank decomposition $p = p_1 + p_2 + \cdots + p_r$, $[p_i] \in \Var{S}_0$, is provided as input.

We are not aware of criteria for specific $r$-identifiability that take into account the coefficients of the given decomposition. It is not inconceivable that for some high rank $r$, the secant space $\langle p_1, \ldots, p_r \rangle$ contains both $r$-identifiable and $r$-nonidentifiable points. Perhaps taking the coefficients into account could lead to criteria for specific identifiability that apply for higher ranks.
\end{remark}

Consider a $d$-factor Segre product
\(
 \Var{S} = \operatorname{Seg}( \F^{n_1}  \times \cdots \times \F^{n_d} )
\)
and let $\vect{h} \party \vect{k} \party \vect{l} = \{1,2,\ldots,d\}$. Then,
\(
 \Var{S} = \operatorname{Seg}( \Var{S}_\vect{h} \times \Var{S}_\vect{k} \times \Var{S}_\vect{l} ) 
\hookrightarrow \Var{S}_{\vect{h},\vect{k},\vect{l}}, 
\)
so an order-$d$ rank-$1$ tensor of $\Var{S}$ can be viewed as an order-$3$ rank-$1$ tensor
 in $\Var{S}_{\vect{h},\vect{k},\vect{l}}$. 
We could try to apply Kruskal's criterion by interpreting $p\in\SecZ{r}{\Var{S}}$ as a third-order tensor 
$p\in\SecZ{r}{\Var{S}_{\vect{h},\vect{k},\vect{l}}}$.
Note that $\Sec{r}{\Var{S}}$ is a Zariski-closed subset\footnote{We are assuming here that 
$\Var{S}_{\vect{h},\vect{k},\vect{l}}$ is nondefective \cite{AOP2009}.} of $\Sec{r}{\Var{S}_{\vect{h},\vect{k},\vect{l}}}$ 
so that \emph{we cannot immediately conclude from \refprop{prop_kruskal} that Kruskal's criterion applied to reshaped 
tensors is effective}. The range of effectiveness follows from the following result.

\begin{lemma}\label{lem_max_glp}
Let $\Var{S} = \operatorname{Seg}( \Pj\F^{n_1} \times \cdots \times \Pj\F^{n_d} )$ with $\F = \R$ or $\C$. Then, there exists a Euclidean-dense, Zariski-open subset $G \subset \Var{S}^{\times r}$ such that for every nonempty $\vect{h} \subset \{1,2,\ldots,d\}$ and every $(p_1,p_2,\ldots,p_r) \in G$, the points $(\pi_{\vect{h}}(p_1),\pi_{\vect{h}}(p_2),\ldots,\pi_{\vect{h}}(p_r)) \in \Var{S}_\vect{h}$ are in GLP.
\end{lemma}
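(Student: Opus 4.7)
The plan is to fix a nonempty $\vect{h} \subseteq \{1,\ldots,d\}$, show that the GLP condition on the $\vect{h}$-projections cuts out a nonempty Zariski-open subset $U_\vect{h} \subset \Var{S}^{\times r}$, and then take $G := \bigcap_{\vect{h}} U_\vect{h}$ over the $2^d - 1$ nonempty subsets. Since $\Var{S}^{\times r}$ is irreducible, a finite intersection of nonempty Zariski-opens remains nonempty and Zariski-dense; Euclidean density then follows by standard arguments. To see that $U_\vect{h}$ is Zariski-open, write $N_\vect{h} = \prod_{i \in \vect{h}} n_i$ and $s_\vect{h} = \min\{r, N_\vect{h}\}$. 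By definition of GLP, an $r$-tuple projects to a GLP configuration in $\Pj\F^{N_\vect{h}}$ if and only if for every index set $I \subset \{1,\ldots,r\}$ with $|I| = s_\vect{h}$ the vectors $\{\pi_\vect{h}(p_i)\}_{i \in I}$ are linearly independent in $\F^{N_\vect{h}}$, which is the non-vanishing of a Pl\"ucker coordinate of the associated $N_\vect{h} \times s_\vect{h}$ matrix. Hence $U_\vect{h}$ is the intersection of the $\binom{r}{s_\vect{h}}$ Zariski-opens indexed by $I$.

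The key step is to show that each of these $\binom{r}{s_\vect{h}}$ opens is nonempty. Observe that $\Var{S}_\vect{h}$ linearly spans the whole of $\F^{N_\vect{h}}$, because it contains the $N_\vect{h}$ standard basis rank-$1$ tensors. In particular there exist $s_\vect{h}$ linearly independent points on $\Var{S}_\vect{h}$, and since $\pi_\vect{h}:\Var{S} \to \Var{S}_\vect{h}$ is surjective (each rank-$1$ tensor in $\Var{S}_\vect{h}$ lifts by tensoring with arbitrary vectors in the factors indexed by the complement of $\vect{h}$) these points are realised as projections of rank-$1$ tensors on $\Var{S}$. Placing such a lift at each position indexed by $I$, and any point of $\Var{S}$ at each remaining position, exhibits an explicit $r$-tuple lying in the $I$-indexed open. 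Thus $U_\vect{h}$ is a nonempty Zariski-open subset of $\Var{S}^{\times r}$, and therefore Zariski-dense by irreducibility.

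Finally, $G := \bigcap_\vect{h} U_\vect{h}$ is a nonempty Zariski-open subset of $\Var{S}^{\times r}$. For $\F = \C$, nonempty Zariski-opens in irreducible complex varieties are automatically Euclidean-dense. For $\F = \R$, the real Segre $\Var{S}_\R^{\times r}$ is a smooth irreducible real algebraic variety, so the complement of $G$ — a proper real algebraic subvariety of strictly lower dimension — has empty Euclidean interior. I expect the only real obstacle to lie in the nonemptiness step: although $\Var{S}$ has much smaller dimension than the reshaped ambient of $\Var{S}_\vect{h}$, one still needs $r$-tuples on $\Var{S}$ to yield GLP configurations after projection. This is where the spanning property of $\Var{S}_\vect{h}$, witnessed by the coordinate rank-$1$ tensors, is doing the essential work; the remainder of the argument is combinatorial bookkeeping across the finitely many subsets $\vect{h}$.
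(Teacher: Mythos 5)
Your proof is correct, but it takes a different route from the paper. The paper proceeds by induction on the cardinality of $\vect{h}$: it shows intrinsically, for each sub-Segre $\Var{S}_\vect{h}$, that the configurations of $r$ points of $\Var{S}_\vect{h}$ failing GLP lie in a proper Zariski-closed subset of $\Var{S}_\vect{h}^{\times r}$ (a dimension count for a single factor, then a product/ideal construction to pass from $\vect{j}$ and $\vect{h}\setminus\vect{j}$ to $\vect{h}$), and only at the end assembles the set $G$. You instead work directly in $\Var{S}^{\times r}$: for each fixed $\vect{h}$ the failure of GLP of the projections is a determinantal, hence Zariski-closed, condition, and the essential point---properness of that closed locus---is settled by an explicit witness: the coordinate rank-$1$ tensors span $\F^{\Pi_\vect{h}}$, every point of $\Var{S}_\vect{h}$ lifts under $\pi_\vect{h}$ to a point of $\Var{S}$, and irreducibility of $\Var{S}^{\times r}$ lets you intersect the finitely many nonempty opens (over the subsets $I$ and over the $2^d-1$ choices of $\vect{h}$). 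This is arguably more elementary and makes transparent exactly where the danger flagged in the introduction (that $\Var{S}$ is a thin subvariety of the reshaped Segre) is defused: not by genericity in the reshaped space, but by surjectivity of $\pi_\vect{h}:\Var{S}\to\Var{S}_\vect{h}$ together with nondegeneracy of $\Var{S}_\vect{h}$. What the paper's inductive formulation buys is an intrinsic statement about generic point configurations on each $\Var{S}_\vect{h}$ itself, which is what feeds directly into the corollary on Khatri--Rao products of generic factor matrices; your argument yields the same corollary with a sentence of extra justification. Two small wording repairs: linear independence of the $s_\vect{h}$ projected vectors is the non-vanishing of \emph{some} maximal minor (equivalently, not all Pl\"ucker coordinates vanish), not of a single prescribed one; and over $\R$ you should note (as the paper implicitly does) that the determinantal equations have real coefficients and that the real Segre is irreducible with smooth real point set of full dimension, so a proper real closed subset indeed has empty Euclidean interior.
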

\begin{proof}
For $r=1$ the statement is obvious. So assume that $r \ge 2$.

We prove the existence of $G = G_{\{1,2,\ldots,d\}}$ by induction on the cardinality of $\vect{h}$. Specifically, we show that for every $\vect{h} \subset \{1,2,\ldots,d\}$ the configurations $(p_1,\ldots,p_r) \in \Var{S}_\vect{h}$ that are not in GLP form a Zariski-closed subset $G_\vect{h} \subset \Var{S}_\vect{h}^{\times r}$.
Let $\vect{h} = \{i\}$. Then, $\Var{S}_{\vect{h}} = \Pj \F^{n_i}$. Let $s = \min\{n_i, r\}$. By definition, the configurations in $\Var{S}_\vect{h}^{\times r}$ wherein the first set of $s$ points are not in GLP can be described as
\begin{align}\label{eqn_closed_set_1}
\bigcup_{[q_2], \ldots, [q_r] \in \Var{S}_\vect{h}} \bigcup_{\alpha_2,\ldots,\alpha_s \in \F} ( [\alpha_2 q_2 + \cdots + \alpha_s q_s], [q_2], \ldots, [q_r] ) \subset \Var{S}_\vect{h}^{\times r}, 
\end{align}
which can be obtained from a projection of $\Pj\F^{s-1} \times \Var{S}_\vect{h}^{\times r-1}$, so that its dimension is strictly less than $\dim \Var{S}_\vect{h}^{\times r}$ because $\min\{r,n_i\}-2 = \dim \Pj\F^{s-1} < \dim \Var{S}_\vect{h} = n_i - 1.$
Hence \refeqn{eqn_closed_set_1} is a Zariski-closed set in $\Var{S}_\vect{h}^{\times r}$. 
The configurations in $\Var{S}_\vect{h}^{\times r}$ where $q_i \in \Var{S}_\vect{h}$ is a linear combination of $s-1$ other points in $\Var{S}_\vect{h}$ can all be obtained from permuting the factors in the Cartesian product in \refeqn{eqn_closed_set_1}. It follows that the union of all these Zariski-closed sets is precisely the Zariski-closed subset $G_\vect{h} \subset \Var{S}_\vect{h}^{\times r}$ of configurations $(q_1,\ldots,q_r) \in \Var{S}_\vect{h}^{\times r}$ that are not in GLP. Note that the sets $G_\vect{h}$ are $\F$-varieties because linear dependence of vectors can be formulated as a collection of determinantal equations with coefficients in $\mathbb{Z} \subset \F$.

Assume now that the statement is true for all $\vect{j} \subset \{1,2,\ldots,d\}$ whose cardinality is less than or equal to $k-1$. Then, we prove that it is true for every $\vect{h}\subset \{1,2,\ldots,d\}$ of cardinality $k$. Let $s = \min\{ \prod_{i\in\vect{h} } n_i, r\}$. By induction, the sets $G_\vect{j}$ with $\vect{j} \subsetneq \vect{h}$ are Zariski-closed. Consider the surjective map
\begin{align*}
 (\Var{S}_\vect{j}^{\times r} \setminus G_\vect{j}) \times (\Var{S}_{\vect{h}\setminus\vect{j}}^{\times r} \setminus G_{\vect{h}\setminus\vect{j}}) &\to \Var{S}_\vect{h}^{\times r} \setminus H_{\vect{h},\vect{j}} \\
 ([x_1],[x_2],\ldots,[x_r]) \times ([y_1],[y_2],\ldots,[y_r]) &\mapsto ([x_1 \otimes y_1],[x_2 \otimes y_2],\ldots,[x_r \otimes y_r]),
\end{align*}
where $H_{\vect{h},\vect{j}}$ can be defined as
\[
 H_{\vect{h},\vect{j}} =  \{ ([x_1 \otimes y_1],\ldots,[x_r \otimes y_r]) \;|\; ([x_1],\ldots,[x_r]) \in G_\vect{j} \text{ or } ([y_1],\ldots,[y_r]) \in G_{\vect{h}\setminus\vect{j}} \}.
\]
Let $\Pi_\vect{l} = \prod_{i\in\vect{l}} n_i$ for any $\vect{l} \subset \{1,2,\ldots,d\}$. Let a Gr\"obner basis of the ideal of $G_\vect{j}$ consist of the $\F$-polynomials 
$f_i(x_{1,1},x_{2,1},\ldots,x_{\Pi_\vect{j},1},\ldots,x_{1,r},x_{2,r},\ldots,x_{\Pi_\vect{j},r}),$ 
and similarly let 
$g_i(y_{1,1},y_{2,1},\ldots,y_{\Pi_{\vect{h}\setminus\vect{j}},1},\ldots,y_{1,r},y_{2,r},\ldots,y_{\Pi_{\vect{h}\setminus\vect{j}},r})$
be the polynomials in a Gr\"obner basis of the ideal of $G_{\vect{h}\setminus\vect{j}}$.
Let $Z_{i,j,k} = x_{i,k}y_{j,k}$ with $i=1,\ldots,\Pi_{\vect{j}}$, $j=1,\ldots,\Pi_{\vect{h}\setminus\vect{j}}$, and $k=1,\ldots,r$ be variables for $\Var{S}_\vect{h}^{\times r}$. 
Then, $H_{\vect{h},\vect{j}} \subset \Var{S}_\vect{h}^{\times r}$ is contained in the variety whose ideal is spanned by the following set of $\F$-polynomials:
\begin{multline*}
f_i( Z_{1,\mu,1},\ldots,Z_{\Pi_\vect{j},\mu,1}, \ldots, Z_{1,\mu,r},\ldots,Z_{\Pi_\vect{j},\mu,r} ) 
\cdot \phantom{0} \\ g_j( Z_{\nu,1,1}, \ldots, Z_{\nu,\Pi_{\vect{h}\setminus\vect{j}},1},\ldots, Z_{\nu,1,r}, \ldots, Z_{\nu,\Pi_{\vect{h}\setminus\vect{j}},r})
\end{multline*}
for every $(i,j)$, $\mu = 1, 2, \ldots, \Pi_{\vect{h}\setminus\vect{j}}$, and $\nu = 1,2,\ldots,\Pi_\vect{j}$. As $G_\vect{j}$ is Zariski-closed by induction, $H_{\vect{h},\vect{j}}$ is Zariski-closed. Thus the finite union 
\(
H_\vect{h} = \bigcup_{\vect{j}\subsetneq\vect{h}} H_{\vect{h},\vect{j}}
\)
is a Zariski-closed set. Now, $\Var{S}_\vect{h}^{\times r} \setminus H_\vect{h}$ contains all configurations $(p_1,p_2,\ldots,p_r)$ for which for every $\vect{j}\subsetneq\vect{h}$ we have that $(\pi_\vect{j}(p_1), \pi_\vect{j}(p_2),\ldots,\pi_\vect{j}(p_r))$ is in GLP. As in the proof of the base case, it is straightforward to show that there exists a Zariski-closed set $G_\vect{h}' \subset \Var{S}_\vect{h}^{\times r}$ that contains all configurations that are not in GLP. The proof is then concluded by setting $G_\vect{h} = G_\vect{h}' \cup H_\vect{h}$. 
\end{proof}

The foregoing result has some implications for the Khatri--Rao product that could be of independent interest, generalizing \cite[Corollary 1]{JStB2001} to the real case.
\begin{corollary}
 Let $(A_1,A_2,\ldots,A_d) \in \F^{n_1 \times r} \times \F^{n_2 \times r} \times \cdots \times \F^{n_d \times r}$ be generic. Then, for every $\vect{h} \subset \{1,2,\ldots,d\}$ of cardinality $k>0$ the matrix
 \(
  A_{h_1} \odot A_{h_2} \odot \cdots \odot A_{h_{k}}
 \)
 has the maximal rank, i.e., $\min\{ r, \prod_{i\in\vect{h}} n_i \}$.
\end{corollary}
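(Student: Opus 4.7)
The plan is to directly leverage \reflem{lem_max_glp}. The key observation is that the $i$th column of the Khatri--Rao product $A_{h_1} \odot \cdots \odot A_{h_k}$ is by construction the vector $\sten{a}{i}{h_1} \otimes \cdots \otimes \sten{a}{i}{h_k}$, which is an affine representative of the projected rank-$1$ point $\pi_{\vect{h}}(p_i) \in \Var{S}_\vect{h}$, where $p_i = [\sten{a}{i}{1} \otimes \cdots \otimes \sten{a}{i}{d}]$.

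First I would set up the reduction from generic matrices to generic points on the Segre variety. The map sending a tuple $(A_1,\ldots,A_d)$ with nonvanishing columns to the $r$-tuple $([p_1],\ldots,[p_r]) \in \Var{S}^{\times r}$ is polynomial and dominant; hence the preimage of the Zariski-open Euclidean-dense set $G \subset \Var{S}^{\times r}$ supplied by \reflem{lem_max_glp}, intersected with the Zariski-open condition that every column of every $A_k$ is nonzero, yields a Zariski-open Euclidean-dense subset $G' \subset \F^{n_1 \times r} \times \cdots \times \F^{n_d \times r}$ of generic matrix tuples.

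Next I would translate the GLP conclusion into a matrix-rank statement. Fix a nonempty $\vect{h}$ of cardinality $k$, set $\Pi_\vect{h} = \prod_{i \in \vect{h}} n_i$ and $s = \min\{r, \Pi_\vect{h}\}$. By \reflem{lem_max_glp}, for any $(A_1,\ldots,A_d) \in G'$ the projected points $\pi_\vect{h}(p_1),\ldots,\pi_\vect{h}(p_r) \in \Pj\F^{\Pi_\vect{h}}$ are in GLP, so every subset of $s$ of them spans a projective subspace of the maximal dimension $s-1$; equivalently, any $s$ of the underlying vectors $\sten{a}{i}{h_1} \otimes \cdots \otimes \sten{a}{i}{h_k}$ are linearly independent. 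These vectors are exactly the columns of $A_{h_1} \odot \cdots \odot A_{h_k}$, and so this matrix attains its maximal possible rank $\min\{r,\Pi_\vect{h}\}$.

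Because \reflem{lem_max_glp} guarantees the GLP property simultaneously for every nonempty $\vect{h}$ with a single open set $G$, one generic tuple in $G'$ witnesses the conclusion for all $\vect{h}$ at once. I do not anticipate a serious obstacle: \reflem{lem_max_glp} has already done the combinatorial work, and this corollary is essentially a dictionary translating the projective GLP of Segre-projected points into linear independence of the corresponding Khatri--Rao columns.
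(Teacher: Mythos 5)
Your proposal is correct and is exactly the argument the paper intends: the corollary is stated as an immediate consequence of \reflem{lem_max_glp} (with no separate proof given), obtained precisely by identifying the columns of $A_{h_1}\odot\cdots\odot A_{h_k}$ with representatives of the projected points $\pi_{\vect{h}}(p_i)$ and translating GLP into linear independence of any $\min\{r,\Pi_{\vect{h}}\}$ columns. Your added detail that generic matrix tuples (with nonzero columns) pull back the Zariski-open dense set $G$ is the right way to make the word ``generic'' precise, and it introduces no new ideas beyond the paper's.
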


It follows immediately from \refprop{prop_kruskal} and \reflem{lem_max_glp} that Kruskal's theorem with reshaping is effective in the broadest range that one could have expected.
\begin{theorem}[Reshaped Kruskal criterion]\label{thm_reshaped_kruskal}
Let $d \ge 3$, and 
let 
\(
\Var{S} = \operatorname{Seg}( \Pj\F^{n_1} \times \Pj\F^{n_2} \times \cdots \times \Pj\F^{n_d} ),
\)
and let $p \in \langle p_1, p_2, \ldots, p_r\rangle$ with $p_i = \sten{a}{i}{1}\otimes\cdots\otimes\sten{a}{i}{d} \in \widehat{\Var{S}}$.
Let $\Pi_\vect{m} = \prod_{\ell \in \vect{m}} n_\ell$ for any $\vect{m} \subset \{1,2,\ldots,d\}$.
Let $\vect{h} \party \vect{k} \party \vect{l} = \{1,2,\ldots,d\}$ be such that 
$\Pi_{\vect{h}} \ge \Pi_{\vect{k}} \ge \Pi_\vect{l}$. 
Let the Kruskal ranks of the factor matrices $A_{\vect{h}}$, $A_{\vect{k}}$ and $A_{\vect{l}}$ 
be denoted by $\kappa_1$, $\kappa_2$ and $\kappa_3$ respectively. Then, $p$ is $r$-identifiable if 
 \(
  r \le \frac{1}{2} (\kappa_1 + \kappa_2 + \kappa_3) - 1.
 \)
 Furthermore, letting $\delta = \Pi_\vect{k} + \Pi_\vect{l} - \Pi_\vect{h} - 2$, this criterion is effective if
 \begin{align}\label{eqn_r_bound}
  r \le \Pi_\vect{h} + \min\{ \tfrac{1}{2} \delta, \delta \}.
 \end{align}
\end{theorem}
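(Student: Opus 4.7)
The plan is to split the claim into the Kruskal-rank inequality part, which follows immediately by reshaping, and the effectiveness part, which is where Lemma \ref{lem_max_glp} does the real work.

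For the first part, I would simply reinterpret $p$ as a third-order tensor living in the reshaped space $\Var{S}_{\vect{h},\vect{k},\vect{l}}$. Under the inclusion $\Var{S}\hookrightarrow\Var{S}_{\vect{h},\vect{k},\vect{l}}$, each summand $p_i = \sten{a}{i}{1}\otimes\cdots\otimes\sten{a}{i}{d}$ becomes the rank-$1$ three-factor tensor $\pi_\vect{h}(p_i)\otimes\pi_\vect{k}(p_i)\otimes\pi_\vect{l}(p_i)$, so the three factor matrices of this reshaped decomposition are precisely $A_\vect{h}$, $A_\vect{k}$, $A_\vect{l}$. Proposition \ref{prop_kruskal} then guarantees $r$-identifiability in $\Var{S}_{\vect{h},\vect{k},\vect{l}}$ as soon as $r\le\tfrac12(\kappa_1+\kappa_2+\kappa_3)-1$. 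Since any other decomposition of $p$ with summands in $\widehat{\Var{S}}$ is also, via the same inclusion, a length-$r$ decomposition in $\widehat{\Var{S}}_{\vect{h},\vect{k},\vect{l}}$, uniqueness in the reshaped ambient space transfers back to $\Var{S}$ for free.

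For effectiveness, the obstruction flagged in the introduction is that $\Sec{r}{\Var{S}}$ is Zariski-closed in $\Sec{r}{\Var{S}_{\vect{h},\vect{k},\vect{l}}}$, so one cannot invoke effectiveness of Kruskal's criterion in the three-factor case directly. Instead I would apply \reflem{lem_max_glp} to $\Var{S}^{\times r}$ itself: it produces a Euclidean-dense, Zariski-open subset $G\subset\Var{S}^{\times r}$ on which, simultaneously for every nonempty $\vect{m}\subset\{1,\ldots,d\}$, the columns of $A_\vect{m}$ lie in general linear position inside $\F^{\Pi_\vect{m}}$. In particular, $\kappa_\vect{m}=\min\{r,\Pi_\vect{m}\}$ for $\vect{m}\in\{\vect{h},\vect{k},\vect{l}\}$ on $G$, and the Kruskal inequality reduces to
\[
 r \;\le\; \tfrac12\bigl(\min\{r,\Pi_\vect{h}\}+\min\{r,\Pi_\vect{k}\}+\min\{r,\Pi_\vect{l}\}\bigr)-1.
\]
Because the summation map $(p_1,\ldots,p_r)\mapsto p_1+\cdots+p_r$ sends $G$ to a Euclidean-dense subset of the affine cone over $\Sec{r}{\Var{S}}$, showing that the displayed inequality holds for a given $r$ is equivalent to showing effectiveness at that $r$.

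What remains is pure bookkeeping: solve the displayed inequality for $r$ by a four-case analysis keyed to the position of $r$ relative to $\Pi_\vect{l}\le\Pi_\vect{k}\le\Pi_\vect{h}$. The cases $r\le\Pi_\vect{l}$ and $\Pi_\vect{l}<r\le\Pi_\vect{k}$ give constraints equivalent to $r\ge 2$ and $\Pi_\vect{l}\ge 2$ and hence impose nothing. The case $\Pi_\vect{k}<r\le\Pi_\vect{h}$ yields the binding bound $r\le\Pi_\vect{k}+\Pi_\vect{l}-2 = \Pi_\vect{h}+\delta$, while $r>\Pi_\vect{h}$ yields $r\le\Pi_\vect{h}+\tfrac12\delta$. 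When $\delta\ge 0$ the latter is accessible and dominates, giving $r\le\Pi_\vect{h}+\tfrac12\delta$; when $\delta<0$ the former caps $r$ strictly below $\Pi_\vect{h}$, giving $r\le\Pi_\vect{h}+\delta$. Merging both regimes gives exactly $r\le\Pi_\vect{h}+\min\{\tfrac12\delta,\delta\}$, which is \refeqn{eqn_r_bound}. I expect no real obstacle beyond the case split; the only conceptual subtlety is the insistence on applying \reflem{lem_max_glp} with $\Var{S}^{\times r}$ as the parameter space rather than $\Var{S}_{\vect{h},\vect{k},\vect{l}}^{\times r}$, since that is precisely what bridges the dimension gap between the rank-$r$ loci in the two ambient spaces.
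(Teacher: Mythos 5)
Your proposal is correct and follows essentially the same route as the paper, which derives \refthm{thm_reshaped_kruskal} directly from \refprop{prop_kruskal} (identifiability of the reshaped third-order decomposition with factor matrices $A_\vect{h}$, $A_\vect{k}$, $A_\vect{l}$) together with \reflem{lem_max_glp} applied to $\Var{S}^{\times r}$ to guarantee maximal Kruskal ranks generically. Your explicit case analysis yielding $r \le \Pi_\vect{h} + \min\{\tfrac12\delta,\delta\}$ is exactly the bookkeeping the paper leaves implicit, and your emphasis on using GLP on $\Var{S}^{\times r}$ rather than on the reshaped variety is precisely the point the paper flags as the reason effectiveness does not follow trivially from the third-order case.
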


\begin{romanex}
Let us consider a rank-$18$ tensor in $\R^{6\times5\times4\times3\times2}$ whose factor matrices $A_k$ were generated 
in Macaulay2 \cite{M2} as follows:
\begin{verbatim}
n = {6,5,4,3,2};
for i from 1 to length(n) do (
  A_i = matrix apply(n_i, jj->apply(r, kk->random(-99,99)));
);
\end{verbatim}
Let $\sten{a}{i}{k}$ denote the $k$th column of $A_k$, $k=1,\ldots,5$. Then these factor matrices naturally represent 
the tensor 
\(
\tensor{A} = \sum_{i=1}^{18} \sten{a}{i}{1} \otimes \cdots \otimes \sten{a}{i}{5}.
\)
One could try applying the higher-order version of Kruskal's theorem due to Sidiropoulos and Bro \cite{SB2000}, which 
states that $\tensor{A}$'s decomposition is unique if $2r \le \kappa_1 + \cdots + \kappa_5 - 4$, where $\kappa_i$ is the Kruskal rank of $A_k$. We have
\begin{verbatim}
apply(length(n),i->kruskalRank(A_(i+1)))
o1 = {6, 5, 4, 3, 2}
\end{verbatim}
Herein, the function \texttt{kruskalRank} in the ancillary file \texttt{reshapedKruskal.m2} computes the Kruskal rank of the input matrix. Since $36 \not\le 6+5+4+3+2-4 = 16$, an application of the higher-order Kruskal criterion is inconclusive. Let us instead consider $\tensor{A}$ as an element of $(\R^{5}\otimes\R^{4}) \otimes (\R^{6} \otimes \R^{3}) \otimes \R^2$ by permuting and grouping the factors in the tensor product. The factor matrices of $\tensor{A}$ in this interpretation are $A_2 \odot A_3 \in \R^{20 \times 18}$, $A_1 \odot A_4 \in \R^{18 \times 18}$ and $A_5 \in \R^{2 \times 18}$. The Kruskal ranks of these matrices can be computed by employing \texttt{reshapedKruskal.m2} as follows:
\begin{verbatim}
{kruskalRank(kr(A,{2,3})), kruskalRank(kr(A,{1,4})), kruskalRank(A_5)}
o2 = {18, 18, 2}
\end{verbatim}
The \texttt{kr(A,L)} function computes the Khatri--Rao product of the $A_{L_i}$, which are all matrices, and where $L$ is a list of indices; for example, \texttt{kr(A,\{i,j\})} computes $A_i \odot A_j$.
Applying Kruskal's criterion to this tensor, we find $36 \le 18 + 18 + 2 - 2 = 36$, so that $\tensor{A}$ is $18$-identifiable as an element of $\R^{20}\times\R^{18}\times\R^2$. It follows that $\tensor{A}$ is also $18$-identifiable in the original space. 

As is stated in \refthm{thm_reshaped_kruskal}, the reshaping of the tensor influences the effective range in which the reshaped Kruskal criterion applies. For instance, if we had considered $\tensor{A}$ as an element of $(\R^{6}\otimes\R^{5}) \otimes (\R^{4} \otimes \R^{3}) \otimes \R^2$, then the Kruskal ranks of $A_1 \odot A_2$, $A_3 \odot A_4$ and $A_5$ are determined by Macaulay2 to be 
\begin{verbatim}
{kruskalRank(kr(A,{1,2})), kruskalRank(kr(A,{3,4})), kruskalRank(A_5)}
o3 = {18, 12, 2}
\end{verbatim}
With this reshaping $36 \not\le 18+12+2-2 = 30$, so that the test is inconclusive.
\end{romanex}

\subsection{A heuristic for reshaping}\label{sec_heurisic_reshaping}
Choosing the partition $\vect{h} \party \vect{k} \party \vect{l}$ in \refthm{thm_reshaped_kruskal} influences the range in which the criterion is effective. Note that if $\Pi_\vect{h} \ge r \ge \Pi_\vect{k} \ge \Pi_\vect{l}$, then the criterion in \refthm{thm_reshaped_kruskal} is effective for $r \le \Pi_\vect{k} + \Pi_\vect{l} - 2$. After our discussions with I.~Domanov, we realized that a good heuristic yielding a large effective range of identifiability consists of first choosing
\[
\vect{k} \in \underset{\substack{\vect{y}\subset\{1,\ldots,d\},\\\vect{x}\party\vect{y}\party\vect{z} = \{1,\ldots,d\},\\\Pi_\vect{x} \ge \Pi_\vect{y} \ge \Pi_\vect{z}}}{\arg\max} \Pi_\vect{y},
\quad\text{and then}\quad
\vect{h} \in \underset{\substack{\vect{x}\subset\{1,\ldots,d\},\\\vect{x}\party\vect{k}\party\vect{z} = \{1,\ldots,d\},\\\Pi_\vect{x} \ge \Pi_\vect{k} \ge \Pi_\vect{z}}}{\arg\min} \Pi_\vect{x},
\]
and finally $\vect{l} = \{1,2,\ldots,d\}\setminus(\vect{h} \cup \vect{k})$. One should thus first try to maximize the second-largest reshaped dimension $\Pi_\vect{k}$, and then minimize the largest reshaped dimension.

\begin{example}
Let $d = 4$. Then there are $6$ distinct partitions of $\{1,2,3,4\}$, namely
\(\sigma_{1,2} = \{1,2\} \party \{3\} \party \{4\},\)
\(\sigma_{1,3} = \{1,3\} \party \{2\} \party \{4\},\)
\(\sigma_{1,4} = \{1,4\} \party \{2\} \party \{3\},\)
\(\sigma_{2,3} = \{2,3\} \party \{1\} \party \{4\},\)
\(\sigma_{2,4} = \{2,4\} \party \{1\} \party \{3\},\) and
\(\sigma_{3,4} = \{3,4\} \party \{1\} \party \{2\}.\)
The effective range of the reshaped Kruskal criterion in \refthm{thm_reshaped_kruskal} corresponding to these partitions is given below for a few arbitrarily chosen shapes:
\begin{align*}\small
\begin{array}{ccccccc}
\toprule
(n_1,n_2,n_3,n_4)  & \sigma_{1,2} & \sigma_{1,3} & \sigma_{2,3} & \sigma_{1,4} &  \sigma_{2,4} & \sigma_{3,4} \\
\midrule
(17, 13, 13, 2)  & 13 & 13 & 17 & 24 & \mathbf{27} & \mathbf{27} \\
(17, 8, 3, 2)    &  3 &  8 & \mathbf{17} &  9 & 17 & 12 \\
(15, 15, 11, 10) & 19 & 23 & 23 & 24 & 24 & \mathbf{28} \\
(15, 13, 9, 4)   & 11 & 15 & 17 & 20 & 22 & \mathbf{26} \\
(12, 10, 7, 7)   & 12 & 15 & 17 & 15 & 17 & \mathbf{20} \\
\bottomrule
\end{array}
\end{align*}
The values highlighted in bold correspond to the choice of the heuristic. In all of these examples, the heuristic choice resulted in the largest range for which the reshaped Kruskal criterion could be applied. 
\end{example}

The heuristic is asymptotically optimal in two extreme cases, namely when $\Var{S}$ is unbalanced and when $n_1 = n_2 = \cdots = n_d = n$. We expect that the proposed partitioning should perform reasonably well in other instances as well.

\begin{proposition}\label{prop_best_range}
Let $\Var{S} = \operatorname{Seg}(\Pj\F^n \times \cdots \times \Pj\F^n)$ be a $d$-factor Segre product. Then the reshaped Kruskal criterion is effective for
\[
r \le 
\begin{cases}
\tfrac{3}{2} n - 1 &\text{if } d = 3, \\
2n - 2 &\text{if } d = 4, \\
n^{\lfloor(d-1)/2\rfloor} + \frac{1}{2}n^{d - 2\lfloor(d-1)/2\rfloor} - 1 &\text{if } d \ge 5.
\end{cases}
\]
Furthermore, for large $n$ this is the largest range in which \refthm{thm_reshaped_kruskal} applies.
\end{proposition}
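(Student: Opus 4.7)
The plan is to parameterize each three-block partition $\vect{h}\party\vect{k}\party\vect{l}$ of $\{1,\ldots,d\}$ by the triple of block sizes $(a,b,c)$ with $a\ge b\ge c\ge 1$ and $a+b+c=d$, so that $\Pi_\vect{h}=n^a$, $\Pi_\vect{k}=n^b$, $\Pi_\vect{l}=n^c$, and the quantity $\delta$ of \refthm{thm_reshaped_kruskal} becomes $n^b+n^c-n^a-2$. Assuming $n\ge 2$, a direct check gives $\delta\ge 0$ iff $a=b$, so the effective bound \refeqn{eqn_r_bound} splits cleanly into two regimes: $F(a,c):=n^a+\tfrac12 n^c-1$ in Case 1 ($a=b$, so $2a+c=d$), and $G(b,c):=n^b+n^c-2$ in Case 2 ($a>b$). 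Proving the proposition reduces to maximizing these two functions over admissible integer triples and comparing their maxima.

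The small-$d$ branches are immediate: for $d=3$ the only triple is $(1,1,1)$, giving $F(1,1)=\tfrac32 n-1$, and for $d=4$ the only triple is $(2,1,1)$, which must lie in Case 2 because no Case 1 triple with $a+b+c=4$ has $a=b\ge c$, giving $G(1,1)=2n-2$. For $d\ge 5$ I would propose the balanced Case 1 triple $a^\star=b^\star=\lfloor(d-1)/2\rfloor$ and $c^\star=d-2a^\star\in\{1,2\}$, verify the admissibility condition $a^\star\ge c^\star$ (equivalently $3a^\star\ge d$, which holds for $d\ge 5$), and compute $F(a^\star,c^\star)$ to recover the value $M_d$ stated in the proposition.

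Maximality then rests on two monotonicity estimates. \emph{Within Case 1}, the consecutive difference $F(a+1,d-2a-2)-F(a,d-2a)$ factors as $(n-1)\bigl[n^a-\tfrac{n+1}{2}n^{d-2a-2}\bigr]$, and the admissibility inequality $3a\ge d$ forces $2n^{3a-d+2}\ge 2n^2\ge n+1$ for $n\ge 2$, so $F$ is non-decreasing in $a$ and attains its maximum at $a=a^\star$. \emph{For Case 2 against Case 1}, the constraints $a\ge b+1$ and $c\ge 1$ force $b\le\lfloor(d-2)/2\rfloor$, and a brief parity split finishes the argument: for $d$ odd, $b\le(d-3)/2$ gives $G\le 2n^{(d-3)/2}-2$, which is strictly less than $M_d$ because $M_d$ contains $n^{(d-1)/2}=n\cdot n^{(d-3)/2}\ge 2n^{(d-3)/2}$; for $d$ even, the extremal triple $(d/2,(d-2)/2,1)$ yields $G=n^{(d-2)/2}+n-2$, beaten by $M_d=n^{(d-2)/2}+\tfrac12 n^2-1$ by $\tfrac12((n-1)^2+1)$, and the remaining Case 2 triples have $b\le(d-4)/2$, so their $G$ is dwarfed by the leading $n^{(d-2)/2}$ of $M_d$.

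The only non-routine step is spotting the factorization of the Case 1 consecutive difference; everything else is bookkeeping and elementary estimates with powers of $n$. The argument works uniformly for $n\ge 2$, which is in fact stronger than the ``large $n$'' proviso of the statement.
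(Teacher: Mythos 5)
Your proof is correct and takes essentially the same route as the paper: parameterize partitions by block sizes, note that the sign of $\delta$ separates the balanced case $a=b$ from the unbalanced one, identify $a=b=\lfloor(d-1)/2\rfloor$ as optimal, and dismiss the unbalanced competitors (in particular the even-$d$ triple $(d/2,(d-2)/2,1)$, which the paper rules out via $n\le\tfrac12 n^2$). The only difference is that your explicit consecutive-difference factorization replaces the paper's ``sufficiently large $n$'' reasoning with inequalities valid for all $n\ge2$, a modest sharpening of the optimality claim rather than a different method.
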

\begin{proof}
The case $d=3$ is \refprop{prop_kruskal}.
 
In the case $d=4$, the only admissible reshaping, up to a permutation of the factors, is to a $n^2 \times n \times n$ tensor. An application of \refthm{thm_reshaped_kruskal} yields the result. Since it is the only admissible reshaping, it is optimal.

Let $d \ge 5$.
Let the cardinality of $\vect{h}$, $\vect{k}$, and $\vect{l}$ be respectively $\alpha$, $\beta$, and $\gamma$, where $\alpha+\beta+\gamma=d$ and $\alpha\ge\beta\ge\gamma\ge1$. Suppose first that $r \ge n^\alpha \ge n^\beta \ge n^\gamma$, so that the criterion is effective if $n^\alpha \le r \le \tfrac{1}{2}(n^\alpha + n^\beta + n^\gamma) - 1$. For sufficiently large $n$, these inequalities are consistent only if $\alpha = \beta \ge \gamma$. In this case, the criterion would be effective up to $r \le n^\alpha + \tfrac{1}{2}n^\gamma - 1$. If $n$ is sufficiently large, the optimal case is obtained when $\alpha = \beta = \lfloor (d-1)/2 \rfloor$ and $\gamma = d - 2\alpha$. This is precisely what one obtains by applying the proposed heuristic. Indeed, in the first step we would choose $\alpha \ge \beta = \lfloor (d-1)/2 \rfloor$. Then, $\alpha$ could either be $\lfloor (d-1)/2 \rfloor$ or $\lceil (d-1)/2 \rceil$ with the heuristic suggesting to pick $\alpha = \beta$. Finally, the value of $\gamma$ is set to $d - 2 \alpha$ so that $\gamma \le 2 \le \beta \le \alpha$. 
The remaining configurations do not result in a larger range of effective identifiability. 
If $n^\alpha \ge r \ge n^\beta \ge n^\gamma$, then the reshaped Kruskal criterion is effective for $r \le n^\beta + n^\gamma -2$. There is but one choice of $\beta$ that might result in a larger range than the proposed heuristic, namely $\beta = \lfloor (d-1)/2 \rfloor$, $\alpha = \lceil (d-1)/2 \rceil$ and $\gamma=1$, and this can only occur when $d$ is even. However, the resulting range is not optimal because $n \le \tfrac{1}{2}n^{d-2\lfloor(d-1)/2\rfloor} = \tfrac{1}{2} n^2$ (whenever $n \ge 2$) for even $d$, so that the proposed heuristic always covers a wider range. If $n^\alpha \ge n^\beta \ge r$, then the criterion is effective for $r \le n^\beta$, but it is immediately clear that this range is not optimal.
\end{proof}

\begin{proposition}
 Let $\Var{S} = \operatorname{Seg}(\Pj\F^{n_1} \times \cdots \times \Pj\F^{n_d})$ with $n_1 \ge \cdots \ge n_d \ge 2$ be an unbalanced Segre variety, i.e.,
 \(
  n_1 > 1 + \prod_{i=2}^d n_d - \sum_{i=2}^d (n_i - 1).
 \)
Then the reshaped Kruskal criterion in \refthm{thm_reshaped_kruskal} is effective for 
\[
 r \le \prod_{i=2}^{d-1} n_i + n_d - 2.
\]
Furthermore, this is the largest range in which \refthm{thm_reshaped_kruskal} applies.
\end{proposition}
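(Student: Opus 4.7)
The plan is to apply \refthm{thm_reshaped_kruskal} with the partition prescribed by the heuristic of \refsec{sec_heurisic_reshaping}, namely $\vect{h} = \{1\}$, $\vect{k} = \{2,\ldots,d-1\}$, and $\vect{l} = \{d\}$, which gives $\Pi_\vect{h} = n_1$, $\Pi_\vect{k} = \prod_{i=2}^{d-1} n_i$, and $\Pi_\vect{l} = n_d$. Verifying the ordering hypothesis $\Pi_\vect{h} \ge \Pi_\vect{k} \ge \Pi_\vect{l}$ is immediate for the second inequality, while for the first I would combine the unbalancedness hypothesis with the elementary bound $(n_d - 1)\prod_{i=2}^{d-1} n_i \ge \sum_{i=2}^d(n_i - 1) - 1$, valid whenever $n_i \ge 2$ and $d \ge 3$.

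Setting $\delta = \prod_{i=2}^{d-1} n_i + n_d - n_1 - 2$, a parallel short manipulation of the unbalancedness hypothesis produces $n_1 > \prod_{i=2}^{d-1} n_i + n_d - 2$, so $\delta < 0$ and $\min\{\delta/2, \delta\} = \delta$. Substituting into \refthm{thm_reshaped_kruskal} gives effectiveness for $r \le n_1 + \delta = \prod_{i=2}^{d-1} n_i + n_d - 2$, which proves the first claim.

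For optimality I would show that no admissible partition $\vect{x}\party\vect{y}\party\vect{z} = \{1,\ldots,d\}$ into non-empty parts with $\Pi_\vect{x} \ge \Pi_\vect{y} \ge \Pi_\vect{z}$ yields a larger bound. The first step is to prove $1 \in \vect{x}$: otherwise the part containing $1$ would satisfy $\Pi \ge n_1$, forcing $\Pi_\vect{x}\Pi_\vect{y} \ge n_1^2$, and together with $\Pi_\vect{z} \ge 2$ this would force $2n_1 \le \prod_{i=2}^d n_i$, which contradicts unbalancedness via the auxiliary inequality $\prod_{i=2}^d n_i \ge 2\sum_{i=2}^d(n_i - 1) - 2$ (a short induction on $d$). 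Writing $\vect{x} = \{1\}\cup X$, the same type of estimate yields $\delta < 0$ for every such partition, so the bound reduces to $\Pi_\vect{y}+\Pi_\vect{z} - 2$. A classical $a+b$ versus $ab$ optimisation then shows this sum is maximised by taking $X = \emptyset$ and $\vect{z}$ equal to the singleton $\{d\}$ with smallest available $n$-value $n_d$, exactly recovering the heuristic partition.

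The main obstacle is the book-keeping in the optimality step, where one must verify $\Pi_\vect{y}+\Pi_\vect{z} < \prod_{i=2}^{d-1} n_i + n_d$ strictly for every $X \ne \emptyset$. This splits into two sub-cases depending on whether $d \in X$; the latter is the more delicate, requiring a comparison of $\prod_{i=2}^{d-2} n_i + n_{d-1}$ (or similar, with $\vect{z}$ constrained to a singleton $\{j^*\}$ with $j^*<d$) against $\prod_{i=2}^{d-1} n_i + n_d$, which ultimately reduces to $n_{d-1}-n_d \le (n_{d-1}-1)\prod_{i=2}^{d-2} n_i$ and its variants, all consequences of $n_i \ge 2$ and the monotonicity $n_2 \ge \cdots \ge n_d$.
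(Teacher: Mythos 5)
Your proposal is correct and follows essentially the same route as the paper: apply \refthm{thm_reshaped_kruskal} to the heuristic partition $\{1\}\party\{2,\ldots,d-1\}\party\{d\}$, check that $\delta<0$ so the effectiveness bound collapses to $\Pi_{\vect{k}}+\Pi_{\vect{l}}-2$, and for optimality force the index $1$ into the largest part and maximise $\Pi_{\vect{y}}+\Pi_{\vect{z}}$ by an exchange argument on the factors. The only notable difference is that you establish $\delta<0$ by direct manipulation of the unbalancedness inequality, whereas the paper obtains it indirectly from the fact that unbalanced formats are never generically $r$-identifiable for $r\ge n_1$ (citing \cite{CGG2002,BCO2013}), so that $\delta\ge 0$ would make the criterion effective beyond $n_1$, which is impossible.
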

\begin{proof}
For $d=3$, we may apply \refprop{prop_kruskal}. Since {the case} $r > n_1$ is not generically $r$-identifiable because of \cite[Theorem 3.1]{CGG2002} and \cite[Proposition 8.2]{BCO2013}, it follows that $r \le \tfrac{1}{2}(r + n_2 + n_3) - 1$ is the widest range in which Kruskal's criterion applies, concluding the proof in this case.

Let $d \ge 4$ in the remainder. Then, we observe that 
 \begin{align*}
  \prod_{i=2}^{d-1} n_i > n_1 > 1 + \prod_{i=2}^d n_i - \sum_{i=2}^{d} (n_i - 1)
 \end{align*}
 is inconsistent, as we should have that
\begin{align*}
1 
&> n_d \Bigl( 1 - \prod_{i=2}^{d-1} n_i^{-1} \Bigr) - \frac{ \sum_{i=2}^{d-1} (n_i - 1) }{ \prod_{i=2}^{d-1} n_i } + 2 \prod_{i=2}^{d-1} n_i^{-1} \\
&= n_d \Bigl( 1 - \prod_{i=2}^{d-1} n_i^{-1} \Bigr) - \frac{ \sum_{i=2}^{d-1} n_i }{ \prod_{i=2}^{d-1} n_i } + d \prod_{i=2}^{d-1} n_i^{-1}, \\
&\ge 2(1-2^{-d+2}) - (d-2)2^{-d+3} + d 2^{-d+2} \\
&= 2 - (d-1)2^{-d+3} + \tfrac{d}{2} 2^{-d+3} = 2 - (\tfrac{d}{2}-1) 2^{-d+3},
\end{align*}
where the second inequality is because of $n_i \ge 2$.
The right hand side is never less than $1$ if $d \ge 4$. Hence, $n_1 \ge \prod_{i=2}^{d-1} n_i$. It follows that the heuristic chooses $\vect{h} = \{1\}$, $\vect{k} = \{2,\ldots,d-1\}$, and $\vect{l} = \{d\}$. The situation $r \ge n_1$ is never generically identifiable in the unbalanced case because of \cite[Theorem 3.1]{CGG2002} and \cite[Proposition 8.2]{BCO2013}. Considering the case $n_1 \ge r \ge \Pi_\vect{k} \ge \Pi_\vect{l}$ leads precisely to the bound on $r$ as in the formulation of the proposition.

It follows from $n_1 \ge \prod_{i=2}^{d-1} n_i$ that $n_1$ is larger than every $\Pi_\vect{k}$ with $\{1\} \party \vect{k} \party \vect{l} = \{1,\ldots,d\}$ with both $\vect{k}$ and $\vect{l}$ nonempty. So, the conditions in \refthm{thm_reshaped_kruskal} can be satisfied only if $\vect{h} \subset \{1,\ldots,d\}$ contains at least ``$1$.'' Whatever the partition $\vect{h}\party\vect{k}\party\vect{l}=\{1,\ldots,d\}$ with $1\in\vect{h}$, {we have} $\delta < 0$ because otherwise the criterion is effective for $r$ larger than $n_1$, which is impossible. 
Therefore, the effective range of identifiability of \refthm{thm_reshaped_kruskal} is $r \le \Pi_\vect{k} + \Pi_\vect{l} - 2$ with $\Pi_\vect{k} \ge \Pi_\vect{l}$ and where $\vect{k} \party \vect{l} = \{1,\ldots,d\}\setminus\vect{h}$. It follows from $n_1 \ge \cdots \ge n_d \ge 2$ and the observation that $n_i a + \tfrac{1}{n_i}b > a + b$ when $a \ge b$ that the maximum is reached for $\vect{k} = \{2,\ldots,d-1\}$, concluding the proof.
\end{proof}

\subsection{Computational complexity}
In practice, we should also account for the substantial computational complexity of computing the Kruskal ranks. 
The following result should be well-known to the experts.

\begin{romanthm}\label{lem_complexity}
Let $\Var{X} \subset \Pj\F^{N}$. The computational complexity of checking that the Kruskal rank of $r$ points 
$p_1,p_2,\ldots,p_r \in \Var{X}$ is \emph{at least} $\kappa \le \min\{r, N\}$ by computing the ranks of $\binom{r}{\kappa}$ 
matrices of size $N \times \kappa$ is
\(
 \mathcal{O}\bigl( \binom{r}{\kappa} \kappa^2 N \bigr).
\)
It follows that the computational complexity of checking that the points $p_i$, $i=1,\ldots,r$, are in GLP using this method is
\[
\mathcal{O}\biggl( \binom{r}{N} N^3 \biggr) \text{ if } r > N, \text{ and } \mathcal{O}( r^2 N ) \text{ if } r \le N.
\]
\end{romanthm}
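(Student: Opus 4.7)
The proof reduces to two standard ingredients: a combinatorial enumeration of subsets, and the well-known cost of a single matrix rank computation. First I would recall that by the definition of Kruskal rank, the condition ``the Kruskal rank of $\{p_1,\ldots,p_r\}$ is at least $\kappa$'' is equivalent to asking that every $\kappa$-element subset $\{p_{i_1},\ldots,p_{i_\kappa}\}$ is in GLP, which in affine coordinates means that the $N \times \kappa$ matrix whose columns are representatives of $p_{i_1},\ldots,p_{i_\kappa} \in \F^N$ has rank exactly $\kappa$. There are exactly $\binom{r}{\kappa}$ such subsets to test.

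Next I would invoke the standard fact from numerical linear algebra that the rank of an $N \times \kappa$ matrix with $\kappa \le N$ can be determined in $\mathcal{O}(\kappa^2 N)$ arithmetic operations, for instance via Householder QR with column pivoting, or via LU decomposition with complete pivoting (see, e.g., Golub and Van Loan). Multiplying this per-subset cost by the number of subsets immediately yields the stated bound $\mathcal{O}\bigl(\binom{r}{\kappa}\kappa^2 N\bigr)$ for certifying Kruskal rank at least $\kappa$.

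For the consequences about GLP, I would use that a configuration of $r$ points in $\Pj\F^N$ is in GLP exactly when its Kruskal rank attains the maximum possible value $\min\{r,N\}$. Instantiating the previous bound with $\kappa = \min\{r,N\}$ splits into two cases: if $r \le N$, then $\kappa = r$ and there is $\binom{r}{r} = 1$ single rank computation on an $N \times r$ matrix, costing $\mathcal{O}(r^2 N)$; if $r > N$, then $\kappa = N$ and we perform $\binom{r}{N}$ rank computations on $N \times N$ matrices, each costing $\mathcal{O}(N^3)$, for a total of $\mathcal{O}\bigl(\binom{r}{N} N^3\bigr)$.

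There is essentially no genuine obstacle: the argument is mostly bookkeeping. The only minor points warranting care are that (i) the bound counts arithmetic operations over $\F$ rather than bit complexity, and (ii) the per-matrix cost $\mathcal{O}(\kappa^2 N)$ presumes $\kappa \le N$, which is harmless here since we always restrict to $\kappa \le \min\{r,N\}$, and a Kruskal rank exceeding the ambient dimension $N$ is impossible.
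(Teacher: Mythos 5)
Your argument is correct and is exactly the computation the paper intends: the paper states this lemma as ``well-known to the experts'' and gives no proof, the method being already encoded in the statement (one $\mathcal{O}(\kappa^2 N)$ rank computation, e.g.\ via pivoted QR or Gaussian elimination, for each of the $\binom{r}{\kappa}$ subsets, then specializing $\kappa=\min\{r,N\}$ for GLP). Your two caveats (arithmetic vs.\ bit complexity, and $\kappa\le\min\{r,N\}$) are sensible and do not affect the stated bounds.
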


\begin{romanrem}
Verifying that the Kruskal rank is at least $2 \le \kappa \le r \le N$ is \emph{more} expensive than verifying that the same points are in GLP, because $\kappa^2 \binom{r}{\kappa} > r^2$ whenever $r \ge 3$.
\end{romanrem}

With the proposed heuristic the computational cost of verifying \refthm{thm_reshaped_kruskal}, in particular the cost 
of checking that the points on the third factor $\Var{S}_\vect{l}$, i.e., $\pi_\vect{l}(p_1),\ldots,\pi_\vect{l}(p_r)$, are 
in GLP, may be prohibitive. The reason is that the cost is at least $\binom{r}{n_{l_1}}n_{l_1}^3$, which is almost 
invariably too expensive if $r$ is large relative to $n_{l_1}$. For instance, if $n_1=100$, $n_2=90$, and $n_3=10$ 
with $r = 90$, then checking GLP on the third factor requires $1000\binom{90}{10}$ operations, which would take 
roughly $6$ days on a computer that completes $10$Gflop/s.
Therefore, we recommend verifying only that the Kruskal rank of aforementioned projected points on the third 
factor $\Var{S}_\vect{l}$ is greater than $1$ by testing for all $\binom{r}{2}$ pairs of points that the points are 
distinct in projective space. This can be accomplished with $\mathcal{O}( \binom{r}{2} n_{l_1} )$ operations, 
which increases only polynomially in $r$. In the previous example, this would reduce the computational cost to only
 $10\binom{90}{2}$ operations, which can be completed in only $4$ microseconds on the same hypothetical computer as before. 

In summary, the following corollary\footnote{The three-factor version of this criterion is sometimes attributed to Harshman \cite{Harshman1972}, however his proof only covers the case $n_1 \ge n_2 \ge n_3 = 2$.} is usually more appealing because of its reduced computational complexity. Its effectiveness is a consequence of \refthm{thm_reshaped_kruskal} and \reflem{lem_complexity}.

\begin{romancor}
Let 
\(
\Var{S} = \operatorname{Seg}( \Pj\F^{n_1} \times \Pj\F^{n_2} \times \cdots \times \Pj\F^{n_d} ).
\)
Let $\Pi_\vect{m} = \prod_{\ell \in \vect{m}} n_\ell$ for any $\vect{m} \subset \{1,2,\ldots,d\}$.
Let $\vect{h} \party \vect{k} \party \vect{l} = \{1,2,\ldots,d\}$, such that $\Pi_{\vect{h}} \ge \Pi_{\vect{k}} \ge \Pi_\vect{l}$. 
Let $p \in \langle p_1, p_2, \ldots, p_r\rangle$ with $p_i = \sten{a}{i}{1}\otimes\cdots\otimes\sten{a}{i}{d} \in \widehat{\Var{S}}_0$.
If both matrices
$A_{\vect{h}}$ and $A_{\vect{k}}$ are of rank $r$ and the Kruskal rank of $A_{\vect{l}}$ is at least $2$,
then $p=\alpha_1 p_1 + \cdots + \alpha_r p_r$ is $r$-identifiable for every $\alpha_i \in \F_0$. 
This criterion is effective in the entire range, i.e., for all $r \le \Pi_\vect{k}$. 

The computational complexity of verifying this criterion is
\[
\mathcal{O}\biggl( r^2 (\Pi_\vect{h} + \Pi_\vect{k}) + \binom{r}{2} \Pi_\vect{l} \biggr);
\]
for fixed $d$, it thus has polynomial complexity in the size of the input $r(n_1 + n_2 + \cdots + n_d)$.
\end{romancor}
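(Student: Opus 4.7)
The plan is to obtain each of the three claims—identifiability, effectiveness throughout the full range $r \le \Pi_\vect{k}$, and the complexity estimate—as a direct specialization of results already established in this section, chiefly \refthm{thm_reshaped_kruskal} and \reflem{lem_max_glp}.

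For identifiability, I would view $p$ as an element of the reshaped three-factor space $\Var{S}_{\vect{h},\vect{k},\vect{l}}$ and apply \refthm{thm_reshaped_kruskal}. The rank hypotheses translate into $\kappa_1 = \kappa_2 = r$ for the Kruskal ranks of $A_\vect{h}$ and $A_\vect{k}$, because an $N \times r$ matrix of rank $r$ must have every subset of its $r$ columns linearly independent. Combined with $\kappa_3 \ge 2$ for $A_\vect{l}$, the Kruskal bound reads $\tfrac{1}{2}(r + r + 2) - 1 = r$, which certifies $r$-identifiability of $p$. Since Kruskal ranks are a projective invariant of the set of columns, the same conclusion holds for every $p = \alpha_1 p_1 + \cdots + \alpha_r p_r$ with $\alpha_i \in \F_0$; this is the same observation that underlies the remark following \refprop{prop_kruskal}.

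For effectiveness, I would invoke \reflem{lem_max_glp} to produce a Euclidean-dense, Zariski-open subset $G \subset \Var{S}^{\times r}$ on which $(\pi_\vect{m}(p_1), \ldots, \pi_\vect{m}(p_r))$ is in general linear position for every nonempty $\vect{m} \subset \{1,\ldots,d\}$. Taking $\vect{m} = \vect{h}, \vect{k}, \vect{l}$ in turn, GLP forces $A_\vect{h}$ and $A_\vect{k}$ to have rank $\min\{r,\Pi_\vect{h}\} = \min\{r,\Pi_\vect{k}\} = r$ thanks to the chain $r \le \Pi_\vect{k} \le \Pi_\vect{h}$, and it forces the Kruskal rank of $A_\vect{l}$ to be $\min\{r,\Pi_\vect{l}\}$, which is at least $2$ since every $n_i \ge 2$ ensures $\Pi_\vect{l} \ge 2$ (the case $r = 1$ being trivially identifiable). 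Hence the criterion holds on the entire dense open $G$, for every $r$ in the advertised range.

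For the complexity bound, the ranks of $A_\vect{h}$ and $A_\vect{k}$ can be determined in $\mathcal{O}(r^2 \Pi_\vect{h})$ and $\mathcal{O}(r^2 \Pi_\vect{k})$ floating-point operations, respectively, by a standard rank-revealing factorization such as QR with column pivoting, accounting for the first summand. Certifying that the Kruskal rank of $A_\vect{l}$ is at least $2$ reduces to checking that each of the $\binom{r}{2}$ pairs of columns in $\F^{\Pi_\vect{l}}$ is linearly independent—equivalently, not a scalar multiple—at cost $\mathcal{O}(\Pi_\vect{l})$ per pair, giving the second summand. Polynomiality in the input size $r(n_1 + \cdots + n_d)$ for fixed $d$ then follows from the crude bound $\Pi_\vect{m} \le (n_1 + \cdots + n_d)^d$. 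I do not anticipate any real obstacle; the entire argument is bookkeeping, and the only minor points requiring care are the degenerate case $r = 1$ and the boundary case $\Pi_\vect{l} = 2$, both of which are dispatched immediately.
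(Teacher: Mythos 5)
Your proposal is correct and follows essentially the same route as the paper, which simply derives the corollary from \refthm{thm_reshaped_kruskal} (equivalently, \reflem{lem_max_glp} for effectiveness, with $\kappa_1=\kappa_2=r$ and $\kappa_3\ge 2$ giving the Kruskal bound $r\le r$) together with the operation counts of \reflem{lem_complexity}. Your explicit handling of the edge cases $r=1$ and $\Pi_{\vect{l}}=2$ and the projective-invariance remark for arbitrary $\alpha_i\in\F_0$ are consistent with the paper's treatment.
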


Employing the heuristic from \refsec{sec_heurisic_reshaping} is advised for obtaining a large range of effectiveness with the above criterion.

\section{Symmetric identifiability}\label{sec_symmetric}
The main purpose of this section is introducing a technique for 
investigating specific identifiability in the symmetric setting based on the Hilbert function. 

\subsection{Basic results}

A well-known result on effective symmetric identifiability is the catalecticant method of \cite[5.4]{IK1999}. It is stated below only for the even degree case as the reshaped Kruskal criterion applies in a wider range for odd degree.
\begin{proposition}[Iarrobino and Kanev \cite{IK1999}]
Let $d = 2m$, and let $\Var{V} = \Pj v_{d}(\F^{n+1})$ be the Veronese variety. Let $p = p_1 + \cdots + p_r$ with $p_i = \sten{a}{i}{\circ d} \in \widehat{\Var{V}}$ be a given decomposition. Let the most square symmetric flattening of $p$ be denoted by
\[
 C = \sum_{i=1}^r (\sten{a}{i}{\circ m}) (\sten{a}{i}{\circ m})^T.
\]
If $\rank{C} = r$ and $r \le \binom{n+m}{m} - (n+1)$, then the kernel of $C$ is the ideal $I_{Z,m}$ of polynomials of degree $m$ simultaneously vanishing on $Z=\{\vect{a}_1, \ldots, \vect{a}_r\}$. If additionally the degree of the closure of the zero set of $I_{Z,m}$ is $r$, then $p$ is $r$-identifiable. This criterion is effective for all 
\[
 r \le \binom{n+m}{m} - (n+1).
\]
\end{proposition}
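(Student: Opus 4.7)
My approach interprets $C$ as a linear map whose kernel, under the apolarity pairing, is the degree-$m$ piece of the homogeneous ideal of $Z$, and then recovers $Z$ from this kernel.

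\textbf{Kernel identification and identifiability.} The matrix $C$ represents the linear map
\[
\Phi : S^m(\F^{n+1})^* \to S^m\F^{n+1}, \quad f \mapsto \sum_{i=1}^r f(\vect{a}_i)\, \sten{a}{i}{\circ m},
\]
under the apolarity pairing $\langle \vect{a}^{\circ m}, f\rangle = f(\vect{a})$. Because the image of $\Phi$ is spanned by the $r$ vectors $\sten{a}{i}{\circ m}$ and has dimension $\rank C = r$, these vectors must be linearly independent. Consequently $\Phi(f) = 0$ iff $f(\vect{a}_i) = 0$ for all $i$, yielding $\ker C = I_{Z,m}$. To deduce identifiability, observe that $C$ depends only on $p$ and not on the chosen decomposition. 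For any other decomposition $p = \sum_i \sten{b}{i}{\circ d}$ with underlying set $Z'$, the same argument yields $I_{Z,m} = I_{Z',m}$; both $Z$ and $Z'$ are contained in the common vanishing locus, which by hypothesis has degree $r$, so each reduced $r$-point set equals that locus, giving $Z = Z'$.

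\textbf{Effectiveness.} I must show that both hypotheses hold on a Euclidean-dense open subset of $\Sec{r}{\Var{V}}$. Linear independence of the $\sten{a}{i}{\circ m}$ is Zariski-open and holds whenever $r \le \binom{n+m}{m}$, so generically $\rank C = r$. For the second condition, note that $\vect{x} \in \Pj\F^{n+1}$ lies in the vanishing locus of $(I_Z)_m$ iff the evaluation functional at $\vect{x}$ lies in the span of the evaluation functionals at the $\vect{a}_i$; under the apolarity pairing this is equivalent to $\sten{x}{ }{\circ m} \in W := \langle \sten{a}{1}{\circ m}, \ldots, \sten{a}{r}{\circ m}\rangle$. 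Hence the base locus of $(I_Z)_m$ is identified with $\Pj W \cap v_m(\Pj\F^{n+1})$. Since $\dim v_m(\Pj\F^{n+1}) = n$ and $\dim \Pj W = r-1$ inside $\Pj S^m\F^{n+1}$ of projective dimension $\binom{n+m}{m}-1$, the expected dimension of this intersection is $n + r - \binom{n+m}{m}$, which is strictly negative precisely when $r \le \binom{n+m}{m} - (n+1)$. A Bertini-type genericity argument then shows that for generic $Z$ the intersection consists of exactly the $r$ forced Veronese points, so the base locus equals $Z$ as a length-$r$ scheme.

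The main obstacle is this last step in the effectiveness proof: one must show that the span $W$ of $r$ generic Veronese points does not meet $v_m(\Pj\F^{n+1})$ outside those $r$ points. A raw dimension count only applies to a \emph{free} linear section, while our $W$ is constrained to contain $r$ Veronese points; resolving this requires showing that the locus of configurations $(Z, \vect{x}) \in (\Pj\F^{n+1})^r \times \Pj\F^{n+1}$ with $\vect{x} \notin Z$ and $\sten{x}{ }{\circ m} \in W_Z$ does not project dominantly onto the first factor, a classical Bertini-type argument.
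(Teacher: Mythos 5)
Your first half is fine: the identification $\ker C = I_{Z,m}$ (which, as you note, really only uses $\rank{C}=r$, forcing the $\sten{a}{i}{\circ m}$ to be independent) and the deduction of $r$-identifiability from the degree-$r$ hypothesis are exactly the classical catalecticant argument of Iarrobino--Kanev, which the paper itself does not reprove but cites from \cite{IK1999}. One small caveat: your step ``each reduced $r$-point set equals that locus'' tacitly assumes the zero locus of $I_{Z,m}$ is zero-dimensional; this is the intended reading of ``degree $\ldots$ is $r$,'' but it is worth saying, since a positive-dimensional locus of degree $r$ containing $Z$ would break the inference.

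The genuine gap is in the effectiveness half, and you flag it yourself without closing it. The paper's entire proof of effectiveness is the citation of \cite[Theorem 2.4]{OO2013}; your proposal replaces that theorem by a dimension count plus an appeal to ``a Bertini-type genericity argument,'' which is never carried out. This is not a routine omission: the statement that $\langle v_m(Z)\rangle$ meets $v_m(\Pj\F^{n+1})$ only in $v_m(Z)$ for \emph{generic} $Z$ genuinely fails for special configurations inside the admissible range, so no argument that ignores genericity can work. For instance, take $n+1=3$, $m=3$ (so $d=6$) and $r=7=\binom{5}{3}-3$: seven distinct real points on a smooth conic impose independent conditions on cubics (restriction to the conic is surjective and seven points on $\Pj\R^2$ impose independent conditions on sextics), so $\rank{C}=7$, yet by B\'ezout every cubic through them contains the conic, so the base locus of $I_{Z,3}$ is the whole conic rather than $Z$. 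Hence the incidence correspondence $\{(Z,x): x\notin Z,\ v_m(x)\in\langle v_m(Z)\rangle\}$ is nonempty over the admissible range, and proving it does not dominate the space of configurations requires actual work --- e.g.\ bounding, for fixed $x$, the dimension of $\{Z : v_m(x)\in\langle v_m(Z)\rangle\}$, or a specialization/induction argument --- none of which is supplied. As written, your proposal proves the criterion itself but not the effectiveness claim, which is precisely the content the paper outsources to \cite[Theorem 2.4]{OO2013}.
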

\begin{proof}
 Effectiveness was proved in \cite[Theorem 2.4]{OO2013}. 
\end{proof}
An implementation of the catalecticant method---which is easily adapted to a criterion for effective specific identifiability as outlined above---is also described in \cite{OO2013}.

The reshaped Kruskal criterion for general tensors is also effective when applied to 
reshaped symmetric tensors. If $d_1 + d_2 + d_3 = d$ is a partition of $d$, then reshaping a rank-$1$ 
symmetric tensor can be thought of as
\begin{align*}
 \Pj v_d( \F^{n+1} ) &\to \operatorname{Seg}\bigl(\Pj v_{d_1}(\F^{n+1}) \times \Pj v_{d_2}(\F^{n+1}) 
\times \Pj v_{d_3}(\F^{n+1}) \bigr) \\
 [\sten{a}{i}{\otimes d}] &\mapsto
 [\sten{a}{i}{\otimes d_1} \otimes \sten{a}{i}{\otimes d_2} \otimes \sten{a}{i}{\otimes d_3}]
\end{align*}
The map can be extended linearly to define reshaping for an arbitrary $d$-form. 
The image of this map is contained in the projectivization of
\(
S^{d_1} \F^{n+1} \otimes S^{d_2}\F^{n+1} \otimes S^{d_3}\F^{n+1} \cong \F^{\binom{n+d_1}{d_1}} \otimes
 \F^{\binom{n+d_2}{d_2}} \otimes \F^{\binom{n+d_3}{d_3}}.
\)

\begin{lemma}\label{lab_symm_glp}
 Let $\Var{S} = \operatorname{Seg}(\Pj\F^{n+1} \times \cdots \times \Pj\F^{n+1})$ be a $d$-factor Segre variety. 
Let $\Var{V} = \Pj S^d\F^{n+1} \cap \Var{S}$ be the variety of symmetric rank-$1$ tensors in $\Pj(\F^{n+1} 
\otimes \cdots \otimes \F^{n+1})$. Then, there exists a dense Zariski-open subset $G \subset \Var{V}^{\times r}$ 
with the property that for every $\vect{h} \subset \{1,2,\ldots,d\}$ and every $(p_1,p_2,\ldots,p_r)\in G$, the points 
$(\pi_\vect{h}(p_1), \pi_\vect{h}(p_2), \ldots, \pi_\vect{h}(p_r)) \in \Var{S}_\vect{h} \cap \Pj 
S^{|\vect{h}|} \F^{n+1}$ are in GLP.
\end{lemma}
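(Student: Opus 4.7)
The key simplification in the symmetric setting is that when $p_i = [\sten{a}{i}{\otimes d}] \in \Var{V}$, the projection $\pi_\vect{h}(p_i)$ depends only on the cardinality $k = |\vect{h}|$ and equals $[\sten{a}{i}{\otimes k}] = v_k([\sten{a}{i}{}])$ lying in the Veronese variety $\Var{V}_k := v_k(\Pj\F^{n+1}) \subset \Pj S^k\F^{n+1}$. Hence the statement reduces to the following: for generic $([\sten{a}{1}{}], \ldots, [\sten{a}{r}{}]) \in (\Pj\F^{n+1})^{\times r}$, and for every $k \in \{1,\ldots,d\}$, the points $v_k([\sten{a}{1}{}]), \ldots, v_k([\sten{a}{r}{}])$ are in GLP in $\Pj S^k \F^{n+1}$. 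I plan to prove this for each fixed $k$ separately and then intersect the finitely many resulting dense Zariski-open sets to obtain $G$.

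For fixed $k$, let $s_k = \min\{r, \binom{n+k}{k}\}$. For each subset $R \subset \{1,\ldots,r\}$ of cardinality $s_k$, the configurations for which $\{v_k([\sten{a}{i}{}])\}_{i\in R}$ fails to be linearly independent are cut out by the simultaneous vanishing of all $s_k \times s_k$ minors of the matrix with columns $\sten{a}{i}{\circ k}$ for $i \in R$; these minors are polynomials with integer coefficients in the entries of the $\sten{a}{i}{}$'s, so the bad locus $B_{k,R} \subset (\Pj\F^{n+1})^{\times r}$ is a Zariski-closed $\F$-subvariety. To show it is \emph{proper}, I will invoke the standard fact that an irreducible nondegenerate projective variety $\Var{X} \subset \Pj W$ has the property that $s$ generic points on it are linearly independent whenever $s \le \dim W$: by induction on $s$, the span of generic $s-1$ points is a proper linear subspace, which by nondegeneracy cannot contain $\Var{X}$, so generic choice of an $s$-th point is linearly independent from the others. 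Applying this with $\Var{X} = \Var{V}_k$ and $W = S^k \F^{n+1}$ (whose projectivization the Veronese spans since $\{\vect{a}^{\circ k}\}$ spans $S^k\F^{n+1}$ as $\vect{a}$ varies) shows $B_{k,R}$ is proper over $\C$; since the defining minors have rational coefficients, a nonzero real minor is nonvanishing on a Euclidean-dense subset of real configurations, so $B_{k,R}$ is also proper over $\R$.

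Taking the finite union $B = \bigcup_{k=1}^{d} \bigcup_{|R|=s_k} B_{k,R}$ produces a Zariski-closed proper subvariety of $(\Pj\F^{n+1})^{\times r}$. Setting $G$ to be the image of its complement under the isomorphism $v_d : \Pj\F^{n+1} \xrightarrow{\sim} \Var{V}$ applied factorwise yields the desired dense Zariski-open subset of $\Var{V}^{\times r}$. The main subtlety I anticipate is only bookkeeping: confirming that the image of the projection $\pi_\vect{h}$ on a symmetric tensor lies in the symmetric subspace $\Pj S^{|\vect{h}|}\F^{n+1} \cap \Var{S}_\vect{h} = \Var{V}_{|\vect{h}|}$ so that the reduction to checking GLP inside Veronese ambient spaces is legitimate; once this identification is set up, the argument parallels \reflem{lem_max_glp} but is considerably simpler because the projections onto different factor subsets all reduce to Veronese re-embeddings of a single vector, removing the inductive construction over subsets of $\{1,\ldots,d\}$ carried out there.
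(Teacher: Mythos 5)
Your proof is correct, and it takes a genuinely different (and in this setting cleaner) route than the paper. The paper simply reuses the machinery of \reflem{lem_max_glp}: an induction on the subsets $\vect{h}$, with the non-GLP configurations on each partial Segre factor shown to be proper Zariski-closed loci via the product construction. You instead exploit the symmetry at the outset: since every $p_i = [\sten{a}{i}{\otimes d}]$, the projection $\pi_\vect{h}(p_i)$ depends only on $k=|\vect{h}|$ and is the Veronese re-embedding $v_k([\sten{a}{i}{}])$, so the whole statement collapses to showing that a generic point configuration in $\Pj\F^{n+1}$ has all its Veronese images $v_k$, $1\le k\le d$, in GLP. Your determinantal description makes the bad loci Zariski-closed, and properness follows from the classical fact that generic points on an irreducible nondegenerate variety (here $v_k(\Pj\F^{n+1})$, nondegenerate because the powers $\vect{a}^{\circ k}$ span $S^k\F^{n+1}$) are linearly independent up to the ambient dimension; the rational-coefficients argument correctly transfers properness to the real case, giving Euclidean density there. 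This removes the induction over subsets and the Gr\"obner-basis bookkeeping entirely, and it also makes explicit the point that matters for \refcor{cor_symm_reshaped_Kruskal}: the relevant ambient space is the symmetric subspace, so the GLP threshold is $s_k=\min\{r,\binom{n+k}{k}\}$ rather than $\min\{r,(n+1)^k\}$, which is exactly what the effectiveness bounds there require. The paper's approach buys uniformity with the general Segre case at the cost of being stated only by reference; yours buys a self-contained and shorter argument specific to the Veronese situation. No gaps of substance; at most you should state explicitly that irreducibility of $(\Pj\F^{n+1})^{\times r}$ is what lets you intersect the finitely many complements and keep density.
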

\begin{proof}
The proof follows along the same lines as the proof of \reflem{lem_max_glp}.
\end{proof}

The foregoing lemma in combination with \refprop{prop_kruskal} yields a symmetric
 version of the reshaped Kruskal condition in \refthm{thm_reshaped_kruskal}.
\begin{corollary}\label{cor_symm_reshaped_Kruskal}
Let 
\(
\Var{S} = \operatorname{Seg}(\Pj\F^{n+1} \times \cdots \times \Pj\F^{n+1})
\)
and
\(
\Var{V} = 
\Pj S^d \F^{n+1} \cap \Var{S}.
 \)
Let $p \in \langle p_1, \ldots, p_r \rangle$ with $p_i = \sten{a}{i}{\otimes d} \in \widehat{\Var{V}}$. Let $\Gamma_k =
 \binom{k+n}{n}$ for any $k = 1,2,\ldots,d$. Let $d_1+d_2+d_3=d$ be a partition of $d$, such that 
$d_1 \ge d_2 \ge d_3$. 
Let $\kappa_1$, $\kappa_2$, and $\kappa_3$ denote the Kruskal ranks of 
 \(\{ \sten{a}{1}{\otimes d_1}, \ldots, \sten{a}{r}{\otimes d_1} \}\), 
 \(\{ \sten{a}{1}{\otimes d_2}, \ldots, \sten{a}{r}{\otimes d_2} \}\), and
 \(\{ \sten{a}{1}{\otimes d_3}, \ldots, \sten{a}{r}{\otimes d_3} \}\)
 respectively.  Then, $p$ is $r$-identifiable if 
 \(
  r \le \frac{1}{2} (\kappa_1 + \kappa_2 + \kappa_3) - 1.
 \)
 Furthermore, letting $\delta = \Gamma_{d_2} + \Gamma_{d_3} - \Gamma_{d_1} - 2$, this criterion is effective if
 \begin{align*}
  r \le \Gamma_{d_1} + \min\{ \tfrac{1}{2} \delta, \delta \}.
 \end{align*}
For large $n$, the maximum range of effective $r$-identifiability is attained for $d_1 = d_2 = 
\lfloor \tfrac{1}{2} (d-1) \rfloor$ and $d_3 = d - 2d_1$:
\[
 r \le 
 \begin{cases}
 \tfrac{3}{2}n + \tfrac{1}{2} &\text{if } d = 3,\\
 2n &\text{if } d = 4,\\
 \binom{d_1 + n}{d_1} + \frac{1}{2}\binom{d_3+n}{d_3} - 1 &\text{if } d \ge 5.
 \end{cases}
\]
\end{corollary}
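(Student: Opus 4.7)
The plan is to mirror the proof of \refthm{thm_reshaped_kruskal}, specialized to the symmetric setting, by reshaping a symmetric tensor as a rank-$1$ element of a three-factor Segre product of Veroneses. For the sufficient condition $r\le\tfrac{1}{2}(\kappa_1+\kappa_2+\kappa_3)-1$, view $p$ as a third-order tensor in $\operatorname{Seg}(\Pj S^{d_1}\F^{n+1}\times\Pj S^{d_2}\F^{n+1}\times\Pj S^{d_3}\F^{n+1})$, whose three ambient factors have dimensions $\Gamma_{d_j}$ and whose factor matrices are precisely the $[\sten{a}{i}{\otimes d_j}]$ for $j=1,2,3$. Apply \refprop{prop_kruskal} in this reshaped space---noting that $d_1\ge d_2\ge d_3$ forces $\Gamma_{d_1}\ge\Gamma_{d_2}\ge\Gamma_{d_3}$ since $\Gamma_k$ is strictly increasing in $k$---and then pull identifiability back to $\widehat{\Var{V}}$ via the inclusion $\Var{V}\hookrightarrow\operatorname{Seg}(\Pj v_{d_1}(\F^{n+1})\times\Pj v_{d_2}(\F^{n+1})\times\Pj v_{d_3}(\F^{n+1}))$, exactly as in the Segre-to-reshaped-Segre argument.

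For effectiveness, invoke \reflem{lab_symm_glp}: for generic $(p_1,\ldots,p_r)\in\Var{V}^{\times r}$ the projected points $(\sten{a}{i}{\otimes d_j})_{i=1}^r$ are in GLP in $\Pj S^{d_j}\F^{n+1}$ for each $j$, which forces $\kappa_j=\min\{r,\Gamma_{d_j}\}$. Substituting into the identifiability bound reduces effectiveness to the purely arithmetic inequality $r\le\tfrac{1}{2}(\min\{r,\Gamma_{d_1}\}+\min\{r,\Gamma_{d_2}\}+\min\{r,\Gamma_{d_3}\})-1$, and the same case analysis used to derive \refeqn{eqn_r_bound} from its nonsymmetric counterpart in \refprop{prop_kruskal} then yields $r\le\Gamma_{d_1}+\min\{\tfrac{1}{2}\delta,\delta\}$ verbatim.

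For the asymptotically optimal partition, I would follow the template of \refprop{prop_best_range}, with $n^k$ replaced by $\Gamma_k$; since for fixed $k$ and $n\to\infty$ the function $\Gamma_k$ is a polynomial of degree $k$ in $n$ with leading term $n^k/k!$, the asymptotic comparisons transfer directly. The low-dimensional cases are degenerate: for $d=3$ the only admissible partition is $(1,1,1)$, for which substitution of $\Gamma_1=n+1$ into the balanced branch ($\delta\ge 0$) gives $\tfrac{3}{2}n+\tfrac{1}{2}$; for $d=4$ the only admissible partition is $(2,1,1)$, for which $\delta<0$ and the bound becomes $\Gamma_{d_2}+\Gamma_{d_3}-2=2n$. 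For $d\ge 5$, to maximize $\Gamma_{d_1}+\tfrac{1}{2}\Gamma_{d_3}-1$ (the value attained when $d_1=d_2$, so that $\delta=\Gamma_{d_3}-2\ge 0$), one balances the two largest parts, forcing $d_1=d_2=\lfloor(d-1)/2\rfloor$ and $d_3=d-2d_1\in\{1,2\}$.

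The main obstacle is bookkeeping rather than substance, concentrated in this last step: one must verify that the competing regimes---those with $r\ge\Gamma_{d_1}$ (generically infeasible because $\Gamma_{d_1}$ dominates the other summands for large $n$ whenever $d_1>d_2$) and with $r\le\Gamma_{d_2}$ (where the effective bound collapses to $\Gamma_{d_2}\le \Gamma_{\lfloor(d-1)/2\rfloor}$)---do not yield a larger effective range than the claimed balanced choice. This reduces to finitely many polynomial-in-$n$ inequalities that mirror the case analysis of \refprop{prop_best_range} line by line, the only change being the replacement of monomials $n^k$ by the polynomials $\binom{n+k}{k}$, which does not affect the asymptotic ordering.
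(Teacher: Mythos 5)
Your proposal is correct and follows essentially the same route as the paper: the paper obtains the criterion by combining \reflem{lab_symm_glp} with \refprop{prop_kruskal} via the three-factor Veronese reshaping, and proves the optimal-partition bound ``in exactly the same way as \refprop{prop_best_range}'', which is precisely your plan (including the degenerate $d=3,4$ cases and the asymptotic transfer from $n^k$ to $\binom{n+k}{k}$).
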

\begin{proof}
 The upper bound on the range of effective identifiability can be proved in exactly the same way as \refprop{prop_best_range}.
\end{proof}

This criterion is effective in the entire range where generic $r$-identifiability holds for a small number of spaces. We call $S^d \F^{n+1}$ effectively identifiable if there exist effective criteria for specific $r$-identifiability for every $\Pj v_d(\F^{n+1})$ that is generically $r$-identifiable.

\begin{proof}[Proof of \refthm{thm_effective_crit}, part I]
$S^3 \F^{3}$ is the only ``normal'' case in the theorem. It is generically $r$-identifiable for $r \le 3$, the generic rank is $4$ and the space is not perfect (or equiabundant). \refcor{cor_symm_reshaped_Kruskal} applies up to $3$, hence concluding this case.

$S^3 \F^4$ is a perfect space and one of the exceptionally identifiable cases in \refthm{thm_perfect}. Generic $r$-identifiability holds up to $r=5$ and \refcor{cor_symm_reshaped_Kruskal} establishes effective specific identifiability up to $r=5$ as well.

$S^3 \F^5$ is a perfect space with generic rank $7$ that is not generically $7$-identifiable because of \refthm{thm_perfect}. It is generically $r$-identifiable for $r\le6$ and \refcor{cor_symm_reshaped_Kruskal} is an effective criterion in this range.

Both $S^3 \F^6$ and $S^6 \F^3$ are effectively identifiable because \refcor{cor_symm_reshaped_Kruskal} applies up to $r=8$, both $\Pj v_3(\F^6)$ and $\Pj v_6(\F^3)$ are generically $r$-identifiable for $r \le 8$, and they are $9$-tangentially weakly defective by \refthm{thm_cov2016}.

$S^4 \F^3$ is a perfect space with generic rank $5$. \refcor{cor_symm_reshaped_Kruskal} yields an effective specific identifiability criterion up to $r = 4$. Since $S^4 \F^3$ is not $5$-identifiable by \refthm{thm_perfect}, the proof of this case is concluded.

Binary forms of even degree $\Pj v_{2k}(\F^2)$ are generically $k$-identifiable, but not identifiable for the generic rank $k+1$. For $k=2$, \refcor{cor_symm_reshaped_Kruskal} yields effective specific identifiability up to $2$. For $k\ge3$, taking $d_1 = d_2 = k-1$ and $d_3 = 2$, \refcor{cor_symm_reshaped_Kruskal} implies effective specific identifiability up to $r \le (1+k-1) + \tfrac{1}{2}(1+2) - 1 = k + \tfrac{1}{2}$. For $\Pj v_{2k+1}(\F^2)$ generic $r$-identifiability exceptionally holds up to $r = k+1$ by \refthm{thm_perfect}. \refcor{cor_symm_reshaped_Kruskal} then implies effective specific $r$-identifiability up to $r \le (k+1)+\tfrac{1}{2} 2 - 1 = k+1$ by choosing $d_1 = d_2 = k$ and $d_3=1$ if $d \ge 5$, and the case $d=3$ yields $r\le 2$. This proves effective identifiability of $S^d \F^2$ for all $d\ge3$.
\end{proof}

An interesting case for which we lack an effective criterion of specific identifiability is $S^5 \F^3$ which is a perfect space that is exceptionally generically $7$-identifiable by \refthm{thm_perfect}, while \refcor{cor_symm_reshaped_Kruskal} only applies up to $r \le 6$.

\begin{romanex}
Let $d = 6$ and $n=3$. According to the corollary, applying the reshaped Kruskal criterion to a generic symmetric tensor of rank $r \le \tfrac{3}{2}\binom{3+2}{2}-1 = 14$ will certify uniqueness with probability $1$. Let us generate a random real symmetric tensor in $S^6\R^{4}$ by executing the following Macaulay2 code:
\begin{verbatim}
n = 3; r = 14;
A_0 = matrix apply(n+1, j->apply(r, k->random(-99,99)));
\end{verbatim}
This matrix \texttt{A\_0} naturally corresponds with the symmetric tensor $\tensor{A} = \sum_{i=1}^{14} \sten{a}{i}{\otimes 6}$, where $\sten{a}{i}{}$ is the $i$th column of \texttt{A\_0}. Its $r$-identifiability can be verified with the reshaped Kruskal criterion by applying Kruskal's criterion to $\sum_{i=1}^{14} \sten{a}{i}{\otimes2} \otimes \sten{a}{i}{\otimes 2} \otimes \sten{a}{i}{\otimes2}$. To this end, we should simply compute the Kruskal rank of $A \odot A \in \R^{4^2 \times 14}$. Note that the columns of this matrix live in $S^2 \R^4$, i.e., they can be considered as vectorizations of symmetric $4 \times 4$ matrices. The Kruskal rank can be computed with the functions in \texttt{reshapedKruskal.m2} as follows:
\begin{verbatim}
kruskalRank(kr(A,{0,0}))
o1 = 10
\end{verbatim}
Note that this is the maximum values because $\dim S^2 \R^4 = 10$.
Since $r = 14 \le \tfrac{3}{2}10 - 1 = 14$, Kruskal's criterion holds, and hence the chosen tensor is $14$-identifiable.
\end{romanex}

\subsection{The Hilbert function}
In this section we introduce some algebraic methods for the detection of the identifiability
of symmetric tensors, namely the Hilbert function of a set of points in a
projective space and their $h$-vector.
Both of these methods are widely used in algebraic geometry, and their
application to the identifiability problem has been considered before in the
literature; see, e.g., \cite{BB2012, BC2012, BC2013, BGL2013}. Yet, we
believe that the interactions between the Hilbert function and tensor
analysis have not yet been fully explored (see also \cite{CM2015}). {We
will employ these techniques in the next section for proving the last remaining 
case of \refthm{thm_effective_crit}.}

Consider a polynomial ring $R=\C[x_0,\dots,x_n]$ and 
the linear space $R_d$ of forms of degree $d$. Let $Z$ be a finite set in $\Pj\C^{n+1}$.
Call $I_Z$ the homogeneous ideal of the set $Z$. Then there is an
exact sequence of \emph{graded} modules:
\(
0\to I_Z \to R\to R/I_Z\to 0.
\)

\begin{definition} The \emph{Hilbert function} $H_Z$ of the set $Z$ associates to each integer
$d$ the dimension $H_Z(d)$ of the linear space $(R/I_Z)_d$.
\end{definition} 

\begin{remark}\label{coho}
There is an interpretation of the Hilbert function in terms of the residue of forms at points.
For a form $f\in R_d$ and a point $P\in Z$, the evaluation $f(P)$ is not well defined,
as it depends on the choice of coordinates for $P$, which is fixed only
up to scalar multiplication.
However, if we consider the residues of \emph{all} forms in a linear space
at \emph{all} possible homogeneous coordinates of the points of $Z$, then
we get a well defined subspace of $\C^\ell$, where $\ell$ is the cardinality of $Z$.
In this sense, if we take the residue of all forms of degree $d$, the dimension
of the subspace of $\C^\ell$ that we obtain is equal to $H_Z(d)$.

A precise algebraic formulation of this principle is easy in the theory of sheaves.
Call $\oo$ the structure sheaf of $\Pj\C^{n+1}$ and $\oo_Z$ the structure sheaf
of $Z$, which is a skyscraper sheaf supported at the $\ell$ points of $Z$.
Then for any degree $d$ we have a well-defined surjective map of sheaves $\oo(d)\to \oo_Z$
whose kernel is the ideal sheaf $\ii_Z(d)$ of $Z$.
Taking global sections, we get an exact sequence of vector spaces
$0\to H^0(\ii)\to H^0(\oo(d))\to H^0(\oo_Z).$
Since $\oo_Z$ is a skyscraper sheaf, then $H^0(\oo_Z)$ can be non-canonically
identified with $\C^\ell$, while $H^0(\oo(d))$ is $R_d$. The 
 left-hand map $\rho_d: H^0(\oo(d))\to H^0(\oo_Z)$ corresponds to taking
residues, as specified above. Thus the rank of $\rho_d$ is
the value of the Hilbert function $H_Z(d)$.
 \end{remark}

Some well-known properties of the Hilbert function are recalled next.

\begin{proposition}\label{easy}\par\noindent
\begin{enumerate}
\item[(i)] $0=H_Z(-1)=H_Z(-2)=\dots$;
\item[(ii)] $H_Z(0)=1$;
\item[(iii)] $H_Z(1)<n+1$ exactly when $Z$ is contained in a hyperplane;
\item[(iv)] $H_Z(d)<\binom{n+d}d$ if and only if $Z$ is contained in a
hypersurface of degree $d$;
\item[(v)] $H_Z(d)\leq H_Z(d+1)$;
\item[(vi)] $H_Z(d)$ cannot be bigger than the cardinality $\ell$ of $Z$;
\item[(vii)] for all $d\gg 0$ then $H_Z(d)=\ell$, the cardinality of $Z$; and
\item[(viii)] if $Z'\subset Z$ then $H_Z(d)\geq H_{Z'}(d)$ for all $d$.
\end{enumerate}
\end{proposition}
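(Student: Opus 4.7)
The plan is to work uniformly through the residue map $\rho_d: R_d \to \mathbb{C}^\ell$ from the preceding remark, whose kernel is $(I_Z)_d$ and whose rank equals $H_Z(d)$, so that $H_Z(d) = \binom{n+d}{d} - \dim (I_Z)_d$. Parts (i), (ii), (iii), (iv) are then immediate: in negative degrees $R_d=0$; in degree $0$ no nonzero constant vanishes on $Z$, so $(I_Z)_0=0$ and $H_Z(0)=1$; in degree $1$, $\dim R_1 = n+1$ and $(I_Z)_1\neq 0$ exactly when $Z$ lies on a hyperplane; in degree $d$, $(I_Z)_d\neq 0$ exactly when $Z$ lies on a degree-$d$ hypersurface. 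Part (vi) is the trivial observation that $\operatorname{rank} \rho_d \le \dim \mathbb{C}^\ell = \ell$. For (viii), $Z'\subset Z$ yields $I_Z\subset I_{Z'}$, so $R/I_{Z'}$ is a quotient of $R/I_Z$, giving $H_{Z'}(d)\le H_Z(d)$ degree by degree.

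The substantive step is monotonicity (v). The plan is to choose a linear form $\lambda\in R_1$ that does not vanish at any point of $Z$; such a $\lambda$ exists because $Z$ is a finite set and $\mathbb{C}$ is infinite, so the union of the finitely many hyperplanes $\{\lambda(P_i)=0\}$, $P_i\in Z$, is a proper subset of $R_1$. Multiplication by $\lambda$ gives a graded map $\mu_\lambda: R_d\to R_{d+1}$, and I claim the induced map $(R/I_Z)_d\to (R/I_Z)_{d+1}$ is injective: if $\lambda f\in I_Z$ then for every $P\in Z$ we have $\lambda(P)f(P)=0$, and since $\lambda(P)\neq 0$ this forces $f(P)=0$, i.e.\ $f\in I_Z$. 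Hence $H_Z(d)\le H_Z(d+1)$.

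For (vii), combining (v) and (vi) shows that $H_Z(d)$ is nondecreasing and bounded by $\ell$, hence it stabilizes at some value $\le \ell$. To see that the stable value equals $\ell$, it is enough to produce, for each $i$, a form of some common degree whose residue is the $i$-th standard basis vector of $\mathbb{C}^\ell$. Writing $Z=\{P_1,\dots,P_\ell\}$, for each $j\neq i$ pick a linear form $\lambda_{i,j}$ vanishing at $P_j$ but not at $P_i$; the product $\prod_{j\neq i}\lambda_{i,j}\in R_{\ell-1}$ vanishes on $Z\setminus\{P_i\}$ but not at $P_i$, so $\rho_{\ell-1}$ is surjective and $H_Z(d)=\ell$ for all $d\ge \ell-1$.

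The only mild obstacle is (v), which requires the existence of a non-vanishing linear form and the injectivity argument above; all other parts follow tautologically from the definition of the Hilbert function and the exact-sequence/residue interpretation of $H_Z(d)$ already established in the preceding remark.
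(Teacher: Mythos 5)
Your proof is correct. The paper states Proposition~\ref{easy} without proof, presenting the items as well-known facts, so there is no argument of the paper to compare against; your derivation via the residue map of Remark~\ref{coho} (kernel $(I_Z)_d$, rank $H_Z(d)$), with multiplication by a linear form avoiding the points of $Z$ for (v) and separating products of $\ell-1$ linear forms for (vii), is the standard argument and sits squarely in the framework the paper has already set up. In fact your proof of (vii) yields the sharper statement $H_Z(d)=\ell$ for all $d\ge \ell-1$, which is more than the ``$d\gg 0$'' asserted.
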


From now on, we write $\ell_Z$ for the cardinality of a finite set $Z$.
A bit more difficult, but still straightforward, is the proof of the next property.

\begin{proposition}\label{no0}
If $H_Z(d_0)=H_Z(d_0+1)$ for some $d_0\geq 0$, then $H_Z(d)=\ell_Z$ for all $d\geq d_0$.
\end{proposition}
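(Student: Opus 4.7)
The plan is to exploit a general linear form $L$ to reduce the problem to the vanishing of a graded Artinian algebra in high degree. Concretely, first I would pick a linear form $L\in R_1$ that does not vanish at any point of $Z$; since $Z$ is finite, such $L$ exists (and is in fact generic). The basic observation is that multiplication by $L$ induces an injective map $(R/I_Z)_{d-1}\to (R/I_Z)_d$ for every $d$: if $fL\in I_Z$ then $f(P)L(P)=0$ for all $P\in Z$, and since $L(P)\neq 0$ by choice of $L$, we conclude $f\in I_Z$.

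Next, I would set $A:=R/(I_Z+(L))$ and consider the short exact sequence of graded vector spaces
\[
0\to (R/I_Z)_{d-1}\xrightarrow{\cdot L} (R/I_Z)_d\to A_d\to 0,
\]
which yields $\dim A_d=H_Z(d)-H_Z(d-1)$, the first difference of the Hilbert function. By hypothesis, this difference vanishes in degree $d_0+1$, so $A_{d_0+1}=0$.

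The key step is then the following observation about $A$: it is a graded $\C$-algebra generated in degree $1$ (as $R$ is), so $A_{k+1}=A_1\cdot A_k$ for every $k\geq 0$. Consequently, once $A_{d_0+1}=0$, an immediate induction gives $A_d=0$ for every $d\geq d_0+1$, i.e., $H_Z(d)=H_Z(d-1)$ for all such $d$. Combined with the hypothesis, this yields $H_Z(d)=H_Z(d_0)$ for all $d\geq d_0$. Finally, invoking part (vii) of \refprop{easy}, which guarantees $H_Z(d)=\ell_Z$ for $d\gg 0$, the constant value of $H_Z$ on the range $d\geq d_0$ must be $\ell_Z$, completing the proof.

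The main technical subtlety will be justifying that $A$ is generated in degree $1$ over $A_0=\C$ and hence that the inductive collapse $A_{k+1}=A_1\cdot A_k$ is legitimate; everything else is a clean bookkeeping argument with the short exact sequence. Choosing $L$ to avoid $Z$ (a nonempty condition because the points of $Z$ impose only finitely many linear conditions on $R_1$) is the only "genericity" used, and it is elementary.
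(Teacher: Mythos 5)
Your proof is correct. The paper states \refprop{no0} without proof (it only remarks that the proof is ``a bit more difficult, but still straightforward''), so there is nothing to compare line by line; your route is the standard one and is exactly the Artinian-reduction machinery the paper itself uses later, in the proof of \refprop{subb}, via \cite[Remark 6.2.8]{Peeva}. The two points that need care are both handled: injectivity of multiplication by $L$ on $R/I_Z$ holds because $I_Z$ is the full ideal of the reduced finite set $Z$ (so homogeneous membership in $I_Z$ is equivalent to vanishing at the points of $Z$, and $L$ can be chosen avoiding the finitely many hyperplanes $L(P)=0$), and $A=R/(I_Z+(L))$ is standard graded as a quotient of the polynomial ring, so $A_{k+1}=A_1\cdot A_k$ and $A_{d_0+1}=0$ propagates to all higher degrees; combining with \refprop{easy}(vii) finishes the argument.
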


The difference
$h_Z(d)= H_Z(d)-H_Z(d-1)$
is always non-negative by \refprop{easy}(v). Furthermore, by (i) and (ii) of \refprop{easy} we get $h_Z(0)=1$,
and from Proposition \ref{no0} it follows that if $h_Z(d)=0$ for some $d>0$, then $h_Z(d')=0$ for 
all $d'\ge d$.

\begin{definition} 
Let $Z$ be a finite set. The \emph{$h$-vector} of $Z$ is the sequence of integers
$(h_Z(0),h_Z(1),\dots ,h_Z(c))$
where $c$ is the maximum such that $H_Z(c-1)<\ell_Z$, i.e., the maximum such that $h_Z(c)>0$.
\end{definition}

The basic properties of the $h$-vector can be summarized as follows.

\begin{proposition}\label{easyh}\par\noindent
\begin{itemize}
\item[(i)] $h_Z(0)=1$;
\item[(ii)] $h_Z(i)>0$ for all $i$;
\item[(iii)] $h_Z(1)$ is the dimension of the projective linear span of $Z$;  
\item[(iv)] If $(h_Z(0),\dots,h_Z(c))$ is the $h$-vector of $Z$, then $H_Z(c)=\ell_Z$
and $H_Z(i)<\ell_Z$ for $i=0,\dots, c-1$; and
\item[(v)] $\sum_{i=0}^c h_Z(i) = H_Z(c)=\ell_Z.$
\end{itemize}
\end{proposition}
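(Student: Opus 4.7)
The plan is to dispatch these five statements in a logical order that isolates the only piece of content, namely item (iv); the rest are bookkeeping consequences of the preceding propositions together with the defining telescoping relation $h_Z(i) = H_Z(i) - H_Z(i-1)$.

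Item (i) is immediate from parts (i) and (ii) of \refprop{easy}: $h_Z(0) = H_Z(0) - H_Z(-1) = 1 - 0 = 1$. I would next address (iv), which pins down where $c$ sits. From $h_Z(c) > 0$ one has $H_Z(c-1) < H_Z(c)$; to upgrade this to $H_Z(c) = \ell_Z$, I would use the maximality clause of the definition of $c$: since $h_Z(c+1) = 0$, one has $H_Z(c) = H_Z(c+1)$, so \refprop{no0} forces $H_Z(d) = \ell_Z$ for every $d \ge c$. In particular $H_Z(c) = \ell_Z$ and $H_Z(c-1) < \ell_Z$. To upgrade the latter to $H_Z(i) < \ell_Z$ for every $i < c$, I would observe that if $H_Z(i) = \ell_Z$ for some $i < c-1$, then monotonicity (\refprop{easy}(v)) and boundedness (\refprop{easy}(vi)) would give $H_Z(c-1) = \ell_Z$, a contradiction.

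Items (ii) and (v) then drop out. For (ii), suppose $h_Z(i) = 0$ for some $1 \le i \le c$; then $H_Z(i-1) = H_Z(i)$ and \refprop{no0} gives $H_Z(d) = \ell_Z$ for all $d \ge i-1$, in particular $H_Z(c-1) = \ell_Z$, contradicting (iv). For (v), telescoping yields $\sum_{i=0}^c h_Z(i) = H_Z(c) - H_Z(-1) = H_Z(c)$, which equals $\ell_Z$ by (iv).

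This leaves (iii). I would identify $(I_Z)_1$ with the space of linear forms vanishing on every point of $Z$, equivalently on the affine linear span $\mathrm{span}_\C(Z) \subset \C^{n+1}$, which has dimension $(n+1) - \dim_\C \mathrm{span}_\C(Z)$. Hence $H_Z(1) = \dim_\C R_1 - \dim_\C (I_Z)_1 = \dim_\C \mathrm{span}_\C(Z)$, and subtracting $h_Z(0) = 1$ converts this affine dimension into the projective dimension of the linear span of $Z$. There is no real obstacle in any of this; the only care needed is organisational, namely to have (iv) available before attacking (ii) and (v).
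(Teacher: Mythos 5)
Your proof is correct, and it follows exactly the route the paper intends: the paper states \refprop{easyh} without proof as a summary of standard facts, and the remarks immediately preceding the definition of the $h$-vector (that $h_Z(0)=1$ follows from \refprop{easy}(i)--(ii) and that \refprop{no0} forces the tail of the $h$-vector to vanish) are precisely the ingredients you combine. Your ordering (proving (iv) first and deducing (ii) and (v) from it, with (iii) handled by identifying $(I_Z)_1$ with the annihilator of the span of $Z$) is sound and fills in the omitted details correctly.
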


\begin{proposition}\label{subb}
If $Z'\subset Z$ then $h_{Z'}(d)\leq h_Z(d)$ for all $d$.
\end{proposition}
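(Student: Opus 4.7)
The plan is to translate the desired inequality on first differences into a monotonicity statement for a certain graded module, and then establish that monotonicity via multiplication by a generic linear form.

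First, observe that $Z'\subset Z$ yields the reverse inclusion of homogeneous ideals $I_Z\subset I_{Z'}$, so the quotient module $M:=I_{Z'}/I_Z$ is a well-defined finitely generated graded $R$-module. Working degree-by-degree in the exact sequence $0\to I_Z\to I_{Z'}\to M\to 0$ and using $\dim (I_X)_d=\binom{n+d}{d}-H_X(d)$ for any finite set $X$, one obtains the key identity
\[
 \dim M_d \;=\; H_Z(d)-H_{Z'}(d).
\]
Subtracting this identity in degrees $d$ and $d-1$ rewrites the claim $h_{Z'}(d)\le h_Z(d)$ as
\[
 \dim M_{d-1}\;\le\;\dim M_d,
\]
that is, the Hilbert function of $M$ itself is non-decreasing.

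The heart of the proof is therefore to exhibit an injective degree-one map $M_{d-1}\hookrightarrow M_d$. The standard tool is multiplication by a generic linear form $\ell\in R_1$. Because $Z$ is finite and reduced, a Zariski-generic choice of $\ell$ gives a hyperplane avoiding every point of $Z$, and such an $\ell$ is a non-zerodivisor on $R/I_Z$: if $f\in R$ satisfies $\ell f\in I_Z$, then for each $P\in Z$ we have $\ell(P)\ne 0$ in the residue field, so the vanishing of $\ell f$ at $P$ forces the vanishing of $f$ at $P$; since $Z$ is reduced this forces $f\in I_Z$. Pulling this back to $M$: if $f\in I_{Z'}$ and $\ell f\in I_Z$, then $f\in I_Z$, so $\cdot\ell:M_{d-1}\to M_d$ is injective.

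The injectivity delivers $\dim M_{d-1}\le \dim M_d$, and combining with the identity above gives $h_{Z'}(d)\le h_Z(d)$ for every $d\ge 1$; the case $d=0$ is trivial since both $h$-vectors start with $1$ by \refprop{easyh}(i), and for $d<0$ both sides vanish. The only nontrivial ingredient is the non-zerodivisor property of a generic $\ell$ on $R/I_Z$; this is the main obstacle in the sense that it is the one place where the set-theoretic (reduced) hypothesis on $Z$ is used, but for finite reduced $Z$ it is a routine consequence of prime avoidance applied to the finitely many associated primes $\mathfrak{p}_P$, $P\in Z$.
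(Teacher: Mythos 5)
Your proof is correct. The reduction is clean: since $I_Z\subset I_{Z'}$, the graded module $M=I_{Z'}/I_Z$ satisfies $\dim M_d=H_Z(d)-H_{Z'}(d)$, the claimed inequality on $h$-vectors is exactly monotonicity of $d\mapsto\dim M_d$, and multiplication by a linear form $\ell$ not vanishing at any point of $Z$ is injective on $M$ because $\ell f\in I_Z$ forces $f$ to vanish on $Z$, hence $f\in I_Z$ (the ideals here being the full, radical vanishing ideals). This is a genuinely different, and more self-contained, route than the paper's: the paper quotes the standard fact that the $h$-vector of $Z$ is the Hilbert function of an Artinian reduction $R/(I_Z+L)$ for generic $L$ (\cite{Peeva}) and then observes that the Artinian reduction of $Z'$ is a graded quotient of that of $Z$, so the inequality is read off from a surjection of Artinian rings; you instead work on the ``kernel side'' with $M=I_{Z'}/I_Z$ and prove directly that $\times\ell$ is injective, which replaces the citation by a two-line argument and in fact needs $\ell$ to miss only the points of $Z\setminus Z'$ rather than full genericity. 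Both arguments ultimately rest on the same commutative-algebra fact (a general linear form is a non-zerodivisor modulo the saturated ideal of a finite reduced set of points); the paper's version buys brevity and places the statement inside the standard Artinian-reduction framework used elsewhere in that section, while yours buys elementarity and independence from the reference. Two cosmetic remarks: your final appeal to prime avoidance is unnecessary, since over the infinite field $\C$ one can simply pick $\ell$ with $\ell(P)\neq 0$ for the finitely many $P\in Z$, which is the argument you already gave; and the case split at $d=0$ is not needed, as the module inequality $\dim M_{-1}\le\dim M_0$ already covers it.
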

\begin{proof}
The $h$-vector $h_Z$ of $Z$ corresponds to the Hilbert function of an Artinian reduction
 $R/(\ii_Z+L)$ with $L$ a generic linear form (see e.g. \cite[Remark 6.2.8]{Peeva}), and 
an Artinian reduction of $Z'$ is a quotient of $R/(\ii_Z+L)$.
\end{proof}

\begin{remark} \label{separ}
Assume that $H_Z(d)=\ell_Z$. Then the map $\rho_d: H^0(\oo(d))\to H^0(\oo_Z)$
introduced in Remark \ref{coho} surjects. {Thus all the elements
of $H^0(\oo_Z)\simeq\C^{\ell_Z}$ sit in the image of the evaluation map.} In particular,
the vector $\left[\begin{smallmatrix} 1 & 0 & \cdots & 0\end{smallmatrix}\right]$
is in the image. This implies that there is a form $f$
of degree $d$ vanishing at all the points of $Z$ except for the first one.
Geometrically this means that there exists a hypersurface of degree $d$ in $\Pj\C^{n+1}$ 
that contains all but one points of $Z$.
As the same phenomenon occurs for all elements of the natural basis of
$H^0(\oo_Z)=\C^{\ell_Z}$, we can find for every $P\in Z$ a hypersurface of degree $d$
that contains $Z\setminus \{P\}$ and excludes $P$.
Thus, if $H_Z(d)=\ell_Z$, then we will say that \emph{hypersurfaces of degree $d$
separate the points of $Z$}.
\end{remark}

The Hilbert function is closely tied with the linear properties of the images of $Z$ under Veronese maps of increasing degrees.

\begin{proposition}\label{HilbVer}
$H_Z(d)$ is equal to the (projective) dimension of the linear span of the image 
of $Z$ in $v_d$ plus $1$:
\(H_Z(d)=\dim \langle v_d(Z) \rangle+1.\)
Consequently, $H_Z(d)=\ell_Z$ if and only if the points of 
$v_d(Z)$ are linearly independent.
\end{proposition}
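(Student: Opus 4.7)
The plan is to interpret the Hilbert function through the dual pairing between degree-$d$ forms and the symmetric power $S^d \F^{n+1}$, and to identify $(I_Z)_d$ with the subspace of linear forms on $\Pj(S^d \F^{n+1})$ that vanish on $\langle v_d(Z)\rangle$. Concretely, I would first use the short exact sequence $0 \to (I_Z)_d \to R_d \to (R/I_Z)_d \to 0$ to rewrite $H_Z(d) = \dim R_d - \dim (I_Z)_d$, where $\dim R_d = \binom{n+d}{d}$.

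Next, I would identify $R_d$ with the dual of $S^d \F^{n+1}$, i.e., with the space of affine linear forms on the ambient vector space of the Veronese embedding $v_d : \Pj \F^{n+1} \to \Pj S^d \F^{n+1}$. Under the tautological duality $\langle f, \vect{a}^{\circ d}\rangle = f(\vect{a})$ (up to the standard multinomial rescaling of the monomial basis), a form $f \in R_d$ vanishes at a point $P = [\vect{a}] \in Z$ if and only if the associated hyperplane in $\Pj S^d \F^{n+1}$ contains $v_d(P)$. Hence $(I_Z)_d$ corresponds bijectively to the vector space of linear forms on $\Pj S^d \F^{n+1}$ vanishing on every $v_d(P)$ with $P \in Z$, which, by linearity, is the same as the space of linear forms vanishing on the linear span $\langle v_d(Z)\rangle$.

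Then I would invoke the standard linear-algebra fact that for any projective subspace $W \subseteq \Pj^N$ of (projective) dimension $k$, the vector space of linear forms on $\Pj^N$ vanishing identically on $W$ has dimension $N - k$. Applied with $N+1 = \binom{n+d}{d}$ and $k = \dim \langle v_d(Z)\rangle$, this gives $\dim (I_Z)_d = \binom{n+d}{d} - 1 - \dim\langle v_d(Z)\rangle$, and subtracting from $\dim R_d$ yields
\[
 H_Z(d) = \dim\langle v_d(Z)\rangle + 1,
\]
which is the first claim.

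For the consequence, note that $v_d(Z)$ consists of $\ell_Z$ points, so its linear span has projective dimension at most $\ell_Z - 1$, with equality exactly when the points are linearly independent. By the formula just proved, this maximality is equivalent to $H_Z(d) = \ell_Z$, giving the stated equivalence. The only nontrivial point is bookkeeping the $\pm 1$'s between affine and projective dimensions correctly in the dual pairing; there is no deeper obstacle, since both sides of the desired identity are computing the same codimension in $R_d$ in two different guises.
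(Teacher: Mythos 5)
Your proof is correct and follows essentially the same route as the paper's: both identify $(I_Z)_d$ with the space of linear forms on the ambient space of the Veronese embedding vanishing on $v_d(Z)$ (equivalently on its span), and then conclude by the standard codimension count, with the final equivalence following since $\ell_Z$ points span a space of projective dimension $\ell_Z-1$ exactly when they are linearly independent. The only difference is presentational: you make the duality pairing between $R_d$ and $S^d\F^{n+1}$ explicit, whereas the paper states the correspondence between $J_1$ and degree-$d$ forms vanishing on $Z$ directly.
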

\begin{proof} The projective dimension $\delta$ of the linear span $\langle v_d(Z) \rangle$ 
is equal to $N$ minus the affine dimension of the space of linear forms
whose corresponding hyperplanes in $\Pj\C^{N+1}$ contain $v_d(Z)$. Thus $\delta$ is
equal to $N-\dim(J_1)$, where $J$ is the homogeneous ideal of $v_d(Z)$ in $\Pj\C^{N+1}$.
Now notice that $N+1=\binom{n+d}d=\dim R_d$. Moreover,
$J_1$ corresponds to the space of forms in $R_d$ which contain $Z$.
Since, by definition, $H_d(Z)=\dim R_d-\dim I_d$, where $I\subset R$ is
the homogeneous ideal of $Z$ in $\Pj\C^{n+1}$, the claim follows.
\end{proof} 

If $Z$ is the union of two disjoint sets $A$ and $B$, then the Hilbert function provides
a way to compute the dimension of the intersection 
$\langle v_d(A) \rangle \cap \langle v_d(B) \rangle$.

\begin{proposition}\label{Grass}If $A$ and $B$ are subsets of $\Pj\C^{n+1}$
and both $v_d(A)$ and $v_d(B)$ are linearly independent sets, then
\(
\dim(\langle v_d(A) \rangle \cap \langle v_d(B) \rangle)= \ell_A+\ell_B-H_Z(d)-1,
\)
where $Z=A\cup B$.
\end{proposition}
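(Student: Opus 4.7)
The plan is to obtain this as a direct consequence of the projective Grassmann formula combined with Proposition \ref{HilbVer}. The assumption that $v_d(A)$ and $v_d(B)$ are linearly independent sets does all the essential bookkeeping, and the formula $H_Z(d) = \dim \langle v_d(Z)\rangle + 1$ converts the statement about linear spans into one about the Hilbert function.

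First, I would record that linear independence of $v_d(A)$ implies $\dim \langle v_d(A)\rangle = \ell_A - 1$ and likewise $\dim \langle v_d(B)\rangle = \ell_B - 1$. Second, I would invoke the Grassmann formula for projective linear subspaces $U,V$ of a common ambient projective space,
\[
\dim(U \cap V) = \dim U + \dim V - \dim \langle U, V\rangle,
\]
applied to $U = \langle v_d(A)\rangle$ and $V = \langle v_d(B)\rangle$. Third, I would observe that $\langle U, V\rangle = \langle v_d(A) \cup v_d(B)\rangle = \langle v_d(A \cup B)\rangle = \langle v_d(Z)\rangle$, since the Veronese map is defined pointwise and $Z = A \cup B$ (with $A$ and $B$ disjoint as set up in the paragraph introducing the statement). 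Finally, Proposition \ref{HilbVer} gives $\dim \langle v_d(Z)\rangle = H_Z(d) - 1$. Substituting all three expressions into Grassmann yields
\[
\dim(\langle v_d(A)\rangle \cap \langle v_d(B)\rangle) = (\ell_A - 1) + (\ell_B - 1) - (H_Z(d) - 1) = \ell_A + \ell_B - H_Z(d) - 1,
\]
which is exactly the claim.

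There is essentially no hard step: the only thing worth double-checking is that the Grassmann formula is being applied in projective, not affine, dimensions, so the $-1$ shifts match up correctly, and that taking Veronese images commutes with set-theoretic union, which is immediate since $v_d$ is defined pointwise. Disjointness of $A$ and $B$ is used implicitly only to ensure $v_d(A)$ and $v_d(B)$ are disjoint inside $\Pj S^d \C^{n+1}$, which together with the hypothesis on linear independence is what guarantees the dimension counts $\ell_A - 1$ and $\ell_B - 1$ on each factor.
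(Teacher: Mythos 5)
Your proof is correct and is essentially identical to the paper's: both apply the Grassmann formula to $\langle v_d(A)\rangle$ and $\langle v_d(B)\rangle$, use linear independence to get dimensions $\ell_A-1$ and $\ell_B-1$, and invoke Proposition \ref{HilbVer} to identify $\dim\langle v_d(Z)\rangle$ with $H_Z(d)-1$. (Your closing remark about disjointness is not actually needed---the statement and the computation hold verbatim whether or not $A$ and $B$ overlap.)
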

\begin{proof}
We use the Grassmann formula:
\begin{align*}
\dim\bigl(\langle v_d(A)\rangle \cap \langle v_d(B)\rangle\bigr)
= \dim(\langle v_d(A)\rangle) +\dim(\langle v_d(B)\rangle) - \dim \left(\langle v_d(A)\rangle+\langle v_d(B)\rangle\right).
\end{align*}
Since $v_d(A)$ and $v_d(B)$ are linearly independent, it follows that $\dim(\langle v_d(A)\rangle)=\ell_A-1$ and 
$\dim(\langle v_d(B)\rangle)=\ell_B-1$. Moreover by Proposition \ref{HilbVer}, 
\(
\dim (\langle v_d(A)\rangle+\langle v_d(B)\rangle) = \dim (\langle v_d(A)\cup v_d(B)\rangle) = H_Z(d)-1.
\)
The claim follows.
\end{proof}

We introduce a fundamental property of finite sets of points in a projective space. 

\begin{definition} We say that a finite set of points $Z\subset \Pj\C^{n+1}$ 
satisfies the \emph{Cayley-Bacharach property in degree $d$}---abbreviated as $\mathit{CB}(d)$---if 
\emph{for every} $P\in Z$ every form of degree $d$ vanishing at $Z\setminus\{ P\}$
also vanishes at $P$.
\end{definition}

If $Z$ satisfies $\mathit{CB}(d)$, then hypersurfaces of degree $d$
cannot separate the points of $Z$; in some sense $\mathit{CB}(P)$ is the exact opposite of separation.
Thus, if $Z$ satisfies $\mathit{CB}(d)$, then $H_Z(d)<\ell_Z$ and $h_Z(d+1)>0$.
However, the converse is false. For instance, the set $Z$ consisting of four points in $\Pj\C^3$, 
three of them aligned,
does not satisfy $\mathit{CB}(1)$, while $H_Z(1)<4$.

The main reason for introducing the $\mathit{CB}(d)$ property lies in the following result, which strongly
bounds the Hilbert functions of set with a Cayley-Bacharach property.

\begin{theorem}[Geramita, Kreuzer, and Robbiano \cite{GKR1993}]\label{CBprop}
The $h$-vector of a set of points $Z$ which satisfies $\mathit{CB}(d)$ has the following property:
for all $k\geq 0$,
$$ h_Z(0)+h_Z(1)+\cdots + h_Z(k)\leq h_Z(d+1-k)+\cdots +h_Z(d)+h_Z(d+1).$$
\end{theorem}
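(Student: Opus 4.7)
The plan is to reduce the statement to a Hilbert-function inequality for an Artinian algebra, in which the Cayley-Bacharach hypothesis becomes a condition on the socle, and then to extract that inequality from the structure of the canonical module.

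First, I would pass to an Artinian reduction: choose a general linear form $L \in R_1$ and set $A = R/(I_Z + L)$. Standard theory, as already invoked in the proof of \refprop{subb}, gives that $A$ is a graded Artinian $\C$-algebra whose Hilbert function coincides with the $h$-vector, $\dim_\C A_i = h_Z(i)$. Writing $\sigma_k = \sum_{i=0}^k h_Z(i)$ and $\tau_k = \sum_{i=d+1-k}^{d+1} h_Z(i)$, the claim reduces to the Hilbert-function inequality $\sigma_k \leq \tau_k$ for $A$.

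Second, I would translate the $\mathit{CB}(d)$ hypothesis into the socle condition $\mathrm{Soc}(A)_i = 0$ for all $i \leq d$. The equivalence comes from the fact that, modulo $L$, a nonzero socle element of $A$ in degree $i$ lifts to a form of degree $i$ in $R$ whose residue on $Z$ is a separator singling out a single point of $Z$, in the sense of \refrem{coho}; this is precisely what $\mathit{CB}(d)$ forbids when $i \leq d$. Generality of $L$ ensures that the $\ell_Z$ points of $Z$ remain individually resolved after the reduction.

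Third, I would analyze the graded Matlis dual $\omega_A = \mathrm{Hom}_\C(A,\C)$ with $(\omega_A)_{-i} = (A_i)^*$, as a finitely generated $A$-module. A standard Nakayama-duality argument, using the identification $(\omega_A)_{-i}/A_1 \cdot (\omega_A)_{-i-1} \cong \mathrm{Soc}(A)_i^*$, shows that the degrees of a minimal system of generators of $\omega_A$ are exactly the negatives of the socle degrees of $A$, so by Step~2 they all lie in degrees $\leq -(d+1)$. For each minimal generator $\theta$ in degree $-j$ with $j \geq d+1$, the cyclic submodule $A \cdot \theta \subset \omega_A$ is isomorphic to $(A/\mathrm{Ann}(\theta))(-j)$, where $A/\mathrm{Ann}(\theta)$ is an Artinian Gorenstein $\C$-algebra of socle degree $j$ with symmetric Hilbert function. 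Combining all generators, the minimal presentation
\[
\bigoplus_{j \geq d+1} A(-j)^{\mu_j} \twoheadrightarrow \omega_A, \qquad \mu_j = \dim_\C \mathrm{Soc}(A)_j,
\]
together with the Gorenstein reflections of the individual cyclic quotients, delivers the inequality $\sigma_k \leq \tau_k$ after summing dimensions degree by degree.

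The main obstacle lies in this last step. In the clean subcase where all socle degrees $j$ are exactly $d+1$, the Gorenstein symmetries $\dim(A/\mathrm{Ann}(\theta))_i = \dim(A/\mathrm{Ann}(\theta))_{d+1-i}$ line up the windows $[0,k]$ and $[d+1-k, d+1]$ summand by summand, yielding the inequality directly. Handling higher socle degrees $j > d+1$ requires showing that the extra mass they contribute in intermediate degrees never spoils the inequality; the standard device is to filter $A$ by the ideals generated by socle elements of each degree and induct on the socle-degree structure, or equivalently to invoke the liaison / canonical-module framework for Cayley-Bacharach schemes, which directly encodes this reflection property between the low and high parts of $h_Z$.
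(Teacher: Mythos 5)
The paper itself gives no argument for this statement: it is quoted from Geramita--Kreuzer--Robbiano \cite{GKR1993}, so your proposal has to stand on its own, and its key reduction fails. In your second step you assert that $\mathit{CB}(d)$ is equivalent to $\operatorname{Soc}(A)_i=0$ for all $i\le d$, justified by the claim that a socle element of degree $i$ lifts to a separator of a single point. The implication actually runs the other way: a separator of $P$ of minimal degree $t$ does produce a socle element of the Artinian reduction in degree $t$ (so \emph{no socle in degrees $\le d$} implies $\mathit{CB}(d)$), but the converse, which is exactly what your third step needs (``all minimal generators of $\omega_A$ lie in degrees $\le-(d+1)$''), is false. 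Take $Z$ to be seven general points in the projective plane. No conic passes through six general points, so $Z$ satisfies $\mathit{CB}(2)$ vacuously; its $h$-vector is $(1,2,3,1)$, and since $\dim A_2=3>2=\dim\operatorname{Hom}(A_1,A_3)$, the map $A_2\to\operatorname{Hom}(A_1,A_3)$ has a kernel, i.e.\ $\operatorname{Soc}(A)_2\neq 0$ (equivalently, the minimal resolution of seven general points ends in $R(-4)\oplus R(-5)$, giving socle in degrees $2$ and $3$). Here there is no separator of degree $2$ at all, yet there is socle in degree $2\le d=2$, so your claimed equivalence, and with it the hypothesis feeding the canonical-module analysis, is wrong. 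Note that the GKR inequality still holds for this $Z$, which shows that $\mathit{CB}(d)$ carries information that is not captured by socle degrees alone; a correct proof must use the Cayley--Bacharach hypothesis more directly than through its ``socle shadow.''

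Even granting the socle condition, the final step is not a proof. The surjection $\bigoplus_j A(-j)^{\mu_j}\twoheadrightarrow\omega_A$ only bounds $\dim(\omega_A)_{-i}=h_Z(i)$ from \emph{above} by the Hilbert function of the free cover, whereas the desired inequality requires a \emph{lower} bound on $\sum_{i=d+1-k}^{d+1}h_Z(i)$. The Gorenstein symmetry of a cyclic quotient $A/\operatorname{Ann}(\theta)$ closes this gap only if one can choose a generator $\theta$ of degree $-(d+1)$ with $\operatorname{Ann}(\theta)_i=0$ for $i\le k$, a nondegeneracy statement you neither formulate nor prove, and which does not follow from a naive genericity argument (the degenerate $\theta$ form a closed locus that is not obviously proper). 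Your own conclusion concedes the point by proposing to ``invoke the liaison / canonical-module framework for Cayley--Bacharach schemes,'' which is essentially to appeal to the theorem of \cite{GKR1993} being proved. As it stands, the proposal therefore has a genuine gap at its central translation step and an unfinished endgame.
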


We proceed by showing the link between Hilbert functions of finite sets and the identifiability
problem for symmetric tensors.
Let $\tensor{A}\in S^d(\C^{n+1})$ be a symmetric tensor with two different
\emph{``minimal''}  decompositions
$
\tensor{A} = \vect{v}_1^{\circ d}+\cdots +\vect{v}_r^{\circ d} = 
\vect{w}_1^{\circ d}+\cdots \vect{w}_s^{\circ d}.
$
In the present context, minimality of the decompositions means that $\tensor{A}$ does not 
lie in the span of a proper subset of the $\vect{v}_i^{\circ d}$'s or of the $\vect{w}_j^{\circ d}$'s.
Let $P_i=[\vect{v}_i]$ and $Q_j=[\vect{w}_j]$ be the points of $\Pj\C^{n+1}$
corresponding to the elements of the decompositions. Define
$$
A=\{P_1,\dots,P_r\},\quad  
B=\{Q_1,\dots,Q_s\}, \text{ and}\quad 
Z=A\cup B.
$$
Then, the projective point $[\tensor{A}]\in \Pj(S^d \C^{n+1})$ belongs
to both spans $\langle v_d(A) \rangle$ and $\langle v_d(B) \rangle$. 
The minimality assumption means that $[\tensor{A}]$ does not
belong to the linear span of any proper subset of either $v_d(A)$ or $v_d(B)$.
So, the intersection $\langle v_d(A) \rangle \cap \langle v_d(B\setminus A) \rangle$ is necessarily non-empty
and $[\tensor{A}]$ belongs to the span of $\langle v_d(A) \rangle \cap \langle v_d(B\setminus A) \rangle$
and $v_d(A)\cap v_d(B)$.
In particular, it follows that the points of $\langle v_d(Z) \rangle$
are not linearly independent. Hence $H_Z(d) < \ell(Z)$, so that $h_Z(d+1)>0$
by Proposition \ref{easyh}(iv). 

In applications, we are mainly confronted with sets $A$ and $B$ that are in GLP, essentially because of 
\reflem{lem_max_glp}. 
In terms of the Hilbert function, $Z$ is in GLP if
and only if for every subset $Z'$ of $Z$ of $s\leq n+1$ points we have that
$H_{Z'}(1)=s$ and  $h_{Z'}(1)=s-1.$
In other words, if we consider an $(n+1) \times \ell_Z$ matrix $M$ whose columns consist of the
projective coordinates for the points of $Z$, then $Z$ is in
GLP if and only if every set of $\min\{\ell_Z,n+1\}$ columns of $M$ is linearly independent.

\section{An effective criterion for $S^4\C^4$}\label{sec_first_effective}
We show how an analysis of the Hilbert function yields an effective criterion for
symmetric tensors of type $4\times 4\times 4\times 4$.
The goal consists of affirming the $r$-identifiability of a tensor
\begin{align}\label{eqn_rank_decomposition_d4}
 \tensor{A} = \vect{v}_1^{\circ 4} + \cdots + \vect{v}_r^{\circ 4}
\end{align}
for any value of $r$.
The results of \cite{C2001, Mella2006} entail that generic tensors of rank $r=8$
in $\Pj(S^4 \C^4)$ are (exceptionally) not $8$-identifiable; they admit two distinct complex decompositions; see, e.g., 
\cite[Section 2]{COV2016}.
Consequently, decompositions with $r \ge 9$ are also not generically $r$-identifiable. On the other hand, it was proved in \cite{BL} that generic tensors of rank $r \le 7$ in $\Pj(S^4 \C^4)$ are identifiable.\footnote{Combining 
the Alexander--Hirschowitz theorem \cite{AH1995} with \cite[Corollary 4.5]{Mella2006} also yields this result.}
An effective criterion for $4 \times 4 \times 4 \times 4$ symmetric tensors should thus certify
generic $r$-identifiability for all $r \le 7$. The reshaped Kruskal criterion (\refcor{cor_symm_reshaped_Kruskal}) is effective in the symmetric setting if
\(
 r \le \binom{3+2}{2} + \min\{ \tfrac{1}{2} \delta, \delta \} = 10-4 = 6
\)
because $\delta = 4+4-\binom{3+2}{2}-2 = -4$. 
As far as we are aware, no effective criterion is known for $r=7$ in the literature. 
In this section, we derive an effective criterion for specific $7$-identifiability of 
tensors in $\Pj(S^4 \C^4)$, hereby concluding the proof of \refthm{thm_effective_crit}.

\subsection{Theory}
Assume that we are given the decomposition \refeqn{eqn_rank_decomposition_d4} with length $r=7$
and that we should determine if $\tensor{A}$ is $7$-identifiable. We start by making two 
assumptions. First, we can assume that the given decomposition is minimal. It is trivial to 
ascertain minimality by checking that $H_A(4) = 7$, which is an easy rank computation. 
If the decomposition is not minimal, then it is not of rank $7$, and so not $7$-identifiable. 
The second assumption is
that the set $A=\{[\vect{v}_1], \dots ,[\vect{v}_7]\}$ is in GLP, a condition which is also easy 
to verify. By \reflem{lem_max_glp}, the subset of points not in GLP on $\Sec{7}{v_4(\C^4)}$ forms
a Zariski-closed set. Hence, this assumption will not alter the effectiveness of our criterion.

We need to show that a different decomposition
$
\tensor{A}= \vect{w}_1^{\circ 4} + \cdots + \vect{w}_s^{\circ 4}
$
with $s\leq 7$ does not exist. Arguing by contradiction, we may assume that a second decomposition
exists and investigate which consequences it has on the geometry of the set $A$. We can assume 
without loss of generality that this alternative decomposition is minimal.
In the remainder, let $B=\{[\vect{w}_1], \dots ,[\vect{w}_s]\}$ and $Z=A\cup B$. 

\begin{proposition} \label{prop_disjointAB}
If alternative decompositions exist, then we can choose an alternative decomposition with $A$ and $B$ disjoint.
\end{proposition}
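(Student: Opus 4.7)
The plan is to argue by contradiction: assume that every alternative minimal Waring decomposition of $\tensor{A}$ of length $s \leq 7$ has $A \cap B \neq \emptyset$, and among all such alternatives choose one $B$ minimizing $|A \cap B|$. Fix a common projective point $[\vect{v}_1] = [\vect{w}_1] \in A \cap B$. Since these represent the same projective point, $\vect{w}_1 = \mu \vect{v}_1$ for some $\mu \in \C\setminus\{0\}$, whence $\vect{w}_1^{\circ 4} = \mu^4 \vect{v}_1^{\circ 4}$. Equating the two expressions for $\tensor{A}$ and isolating the common direction gives the key identity
\[
(1-\mu^4)\,\vect{v}_1^{\circ 4} \;=\; \sum_{j=2}^{s} \vect{w}_j^{\circ 4} \;-\; \sum_{i=2}^{7} \vect{v}_i^{\circ 4}.
\]
I would use this identity to build a new alternative with strictly smaller intersection with $A$, producing the desired contradiction.

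I would then split into two cases according to $\mu^4$. If $\mu^4 \neq 1$, one can solve the identity for $\vect{v}_1^{\circ 4}$ and substitute the resulting expression back into $\tensor{A} = \sum_{i=1}^{7} \vect{v}_i^{\circ 4}$; rescaling each summand by a fourth root of its (nonzero) complex coefficient---which exists because we work over $\C$---converts the output into a Waring expression of $\tensor{A}$ whose projective support is contained in $(A\cup B)\setminus\{[\vect{v}_1]\}$, and in particular avoids $[\vect{v}_1]$. If $\mu^4 = 1$, then $\vect{w}_1^{\circ 4} = \vect{v}_1^{\circ 4}$ as tensors and the identity simplifies to $\sum_{i=2}^{7} \vect{v}_i^{\circ 4} = \sum_{j=2}^{s} \vect{w}_j^{\circ 4}$, exhibiting two distinct Waring decompositions of the shorter tensor $\tensor{A}' := \tensor{A} - \vect{v}_1^{\circ 4}$ of lengths $6$ and $s-1$, to which I would apply the same reduction recursively (on the total length $r+s$) and then add $\vect{v}_1^{\circ 4}$ back to obtain an alternative for $\tensor{A}$.

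The main obstacle is the length inflation in the $\mu^4 \neq 1$ case: the naive substitution yields an expression with up to $s+5 \le 12$ summands, exceeding the required bound $s\leq 7$. To finish, one must compress this expression back to a minimal decomposition of length at most $7$; this is the step where minimality of $B$ and the general linear position of $A$ (which is guaranteed by \reflem{lab_symm_glp} and is preserved by passing to subsets of $A$) must be genuinely used. Since $\tensor{A}$ has rank at most $7$ by hypothesis and the support of the constructed expression lies in $(A\cup B)\setminus\{[\vect{v}_1]\}$, the tensor $\tensor{A}$ admits some minimal decomposition supported inside this strictly smaller set, and such a decomposition has length at most $7$. This new alternative $B'$ satisfies $|A \cap B'| < |A \cap B|$, contradicting the minimality choice at the start and proving the proposition.
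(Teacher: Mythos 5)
Your opening move (cancel a common point and split on whether the two fourth powers coincide) parallels the paper's first step, but the two places you defer ``to finish'' are exactly where the mathematical content of this proposition lives, and neither goes through as written. The compression step is a non sequitur: from the fact that $\tensor{A}$ has rank at most $7$ and that you have exhibited $\tensor{A}$ as a linear combination of at most $12$ fourth powers supported on $(A\cup B)\setminus\{[\vect{v}_1]\}$, it does not follow that some decomposition of length at most $7$ is supported \emph{inside} that set: a minimal decomposition exists, but nothing constrains its support to your $12$ points (it could reuse $[\vect{v}_1]$ or involve entirely new points), and GLP of $A$ plus minimality of $B$ do not supply such a constraint by themselves. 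The paper closes precisely this hole with heavy machinery: when none of the rescaled coefficients on the common points equals $1$, it observes that $\tensor{A}'$ has a length-$7$ decomposition supported on $A$ (in GLP) together with a \emph{disjoint} alternative $B'=B\setminus A$ of length $s-k\le 6$, and then applies \refprop{prop_twisted_cubic} (whose proof rests on the Cayley--Bacharach property and the Hilbert-function estimates) to the disjoint pair to force $|B'|=7$, hence $s\ge 8$, a contradiction; when some coefficient equals $1$, it uses the reshaped Kruskal criterion (identifiability of a decomposition of length at most $6$ supported on a GLP set) to force $B=A$. Your outline invokes neither ingredient, so its central step is unsupported.

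There is a second structural problem: even granting a length-$\le 7$ alternative $B'$ supported in $(A\cup B)\setminus\{[\vect{v}_1]\}$, your descent on $|A\cap B|$ fails, because that support still contains $[\vect{v}_2],\ldots,[\vect{v}_7]$; the new decomposition may therefore meet $A$ in points that were not in $B$, and $|A\cap B'|$ can exceed $|A\cap B|$ (already when $|A\cap B|=1$), so the extremal choice of $B$ yields no contradiction. Finally, in the case $\mu^4=1$ the proposed recursion ``on total length'' leaves the fixed setting of the proposition (a length-$7$ set $A$ in GLP); what actually closes that case is that $\sum_{i=2}^{7}\vect{v}_i^{\circ 4}$ is a length-$6$ decomposition with support in GLP, hence identifiable by Kruskal's criterion (\refcor{cor_symm_reshaped_Kruskal}), which forces $\{[\vect{w}_2],\ldots,[\vect{w}_s]\}=\{[\vect{v}_2],\ldots,[\vect{v}_7]\}$ and thus $B=A$, a direct contradiction---this is how the paper argues. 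As it stands, the proposal has genuine gaps and does not prove the statement.
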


The proof of this result is delayed until after \refprop{prop_twisted_cubic}.

\begin{proposition} \label{prop_CB}
Alternative decompositions exist only if $Z$ satisfies $\mathit{CB}(4)$.
\end{proposition}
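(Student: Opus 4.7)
The plan is to invoke the apolarity pairing $\langle\,\cdot\,,\,\cdot\,\rangle\colon R_4\times S^4\C^4\to\C$, characterized on pure symmetric powers by $\langle f,\vect{u}^{\circ 4}\rangle=f(\vect{u})$ and extended bilinearly, in order to extract a contradiction from a hypothetical failure of $\mathit{CB}(4)$. First I would reduce to the disjoint case: by \refprop{prop_disjointAB}, among all possible alternative decompositions one can be chosen for which $A\cap B=\emptyset$. This disjointness is important because it makes $Z\setminus\{P\}$ cleanly describable as either $(A\setminus\{P\})\cup B$ or $A\cup(B\setminus\{P\})$, depending on whether $P\in A$ or $P\in B$.

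Next I would argue by contradiction. Suppose $Z$ fails $\mathit{CB}(4)$: there exist a point $P\in Z$ and a quartic form $f\in R_4$ vanishing on $Z\setminus\{P\}$ but not at $P$. By the manifest symmetry between the two decompositions, I may assume $P=[\vect{v}_1]\in A$. Then I would evaluate $\langle f,\tensor{A}\rangle$ in two ways. Using $\tensor{A}=\vect{v}_1^{\circ 4}+\cdots+\vect{v}_7^{\circ 4}$, bilinearity and the assumption that $f$ annihilates $[\vect{v}_i]$ for $i\geq 2$ give
\[
\langle f,\tensor{A}\rangle \;=\; \sum_{i=1}^{7} f(\vect{v}_i) \;=\; f(\vect{v}_1) \;\neq\; 0.
\]
Using the alternative decomposition $\tensor{A}=\vect{w}_1^{\circ 4}+\cdots+\vect{w}_s^{\circ 4}$ and the fact that $B\subseteq Z\setminus\{P\}$ lies in the zero locus of $f$, the same pairing yields
\[
\langle f,\tensor{A}\rangle \;=\; \sum_{j=1}^{s} f(\vect{w}_j) \;=\; 0.
\]
The contradiction forces $\mathit{CB}(4)$ to hold; if instead $P\in B$, swap the roles of the two decompositions and repeat the argument verbatim.

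I do not expect any serious obstacle: once disjointness is secured via \refprop{prop_disjointAB}, the argument collapses to a one-line apolarity computation. The only subtlety worth spelling out is that $f(\vect{v}_i)$ and $f(\vect{w}_j)$ are honest complex numbers (rather than merely classes modulo scalar), which holds because the Waring decompositions fix actual vectors in $\C^4$, not only their projective classes; the hypothesis that $f$ vanishes at a projective point $[\vect{v}]$ is then equivalent to $f(\vect{v})=0$ because $f$ is homogeneous. In effect, the real content of the proposition was already packed into the preceding disjointness result.
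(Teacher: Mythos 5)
Your proof is correct, but it takes a genuinely different route from the paper. The paper stays inside the Hilbert-function framework of \refsec{sec_symmetric}: if $\mathit{CB}(4)$ failed at some $P$, then $H_Z(4)=H_{Z\setminus\{P\}}(4)+1$, and the Grassmann-formula computation of \refprop{Grass} forces $\langle v_4(A)\rangle\cap\langle v_4(B)\rangle=\langle v_4(A\setminus\{P\})\rangle\cap\langle v_4(B)\rangle$ (when $P\in A$), so that $\tensor{A}$ lies in the span of $v_4(A\setminus\{P\})$, contradicting minimality of the decomposition; the case $P\in B$ is symmetric. You instead pair the separating quartic directly with $\tensor{A}$ and evaluate along both decompositions. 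This is shorter and more elementary: it bypasses \refprop{Grass}, the Hilbert-function bookkeeping, the linear independence of $v_4(A)$ and $v_4(B)$ needed to invoke \refprop{Grass}, and even the minimality hypothesis --- all you use is the exact tensor equality and the disjointness $A\cap B=\emptyset$ supplied by \refprop{prop_disjointAB}, exactly as the paper does. What the paper's route buys is coherence with the rest of the section (the same $h$-vector machinery is immediately reused in \refprop{prop_twisted_cubic}), whereas your route buys a one-line argument of slightly wider applicability (any degree $d$, any lengths, no minimality). One small point of rigor: ``characterized on pure powers and extended bilinearly'' should be phrased so that well-definedness is not assumed tacitly --- the clean statement is that the contraction (apolarity) pairing $R_4\times S^4\C^4\to\C$ is intrinsically defined and restricts to $\langle f,\vect{u}^{\circ 4}\rangle=f(\vect{u})$ on the Veronese cone, which is standard; with that phrasing there is no circularity in concluding $\sum_i f(\vect{v}_i)=\sum_j f(\vect{w}_j)$.
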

\begin{proof} Assume it does not. Then, there exist a $P\in Z$ and a form of degree $4$
that contains $Z'=Z\setminus\{P\}$ but excludes $P$. Thus, the homogeneous
ideals satisfy $\dim (I_Z)_4 < \dim (I_{Z'})_4$, so that $H_Z(4)>H_{Z'}(4)$.
It follows that $h_Z(q)>h_{Z'}(q)$ for some value $q\leq 4$. Since $h_Z(i)\geq h_{Z'}(i)$
for all $i$ by \refprop{subb}, and $\sum h_Z(i)=\ell_Z=1+\ell_{Z'}=1+\sum h_{Z'}(i)$,
it follows that  $h_Z(q)=1+h_{Z'}(q)$ and $h_Z(i)=h_{Z'}(i)$ for $i\neq q$. Thus,
$ H_Z(4)= H_{Z'}(4)+1.$

Now assume that $P\in A$ and recall that we may  assume $A\cap B=\emptyset$ by Proposition
\ref{prop_disjointAB}.
Setting $A'=A\setminus\{P\}$, we get from \refprop{Grass} that
\begin{align*}
\dim(\langle{}v_4(A)\rangle{}\cap \langle{}v_4(B)\rangle{})= \ell_A+\ell_B-H_Z(4)-1 
&= \ell_{A'}+\ell_B-H_{Z'}(4)-1 \\
&=\dim(\langle{}v_4(A')\rangle{}\cap \langle{}v_4(B)\rangle{}),
\end{align*}
so that $\langle{}v_4(A)\rangle{}\cap \langle{}v_4(B)\rangle{}=\langle{}v_4(A')\rangle{}\cap \langle{}v_4(B)\rangle{}$. 
Consequently, $\tensor{A}$ belongs to $v_4(A')$, contradicting the assumption of minimality.
If $P\in B$ we similarly obtain that $\tensor{A}$ belongs to the span of
$v_4(B\setminus\{P\})$, contradicting the minimality of $B$.
\end{proof}

\begin{proposition}\label{prop_twisted_cubic}
Alternative decompositions exist only if $s=|B|=7$. The $h$-vector of $Z$ is
$(1,3,3,3,3,1)$ and $Z$ is contained in an irreducible twisted cubic curve.
\end{proposition}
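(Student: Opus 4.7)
The plan is to first fix the length and initial values of the Hilbert function of $Z$ using the Cayley--Bacharach inequality, next to exhibit a degree-$3$ curve containing $Z$ from the three quadrics in $I_Z$, and finally to pin down its geometry via the GLP hypothesis on $A$. By \refprop{prop_disjointAB} and \refprop{prop_CB} we may assume $A$ and $B$ are disjoint and that $Z = A \cup B$ satisfies $\mathit{CB}(4)$. Since $A$ is $7$ points in GLP in $\mathbb{P}^{3}$, \reflem{lab_symm_glp} applied to the degree-$2$ Veronese embedding forces $H_A(2) = 7$, giving $h_A = (1, 3, 3)$. By monotonicity (\refprop{subb}) we have $h_Z(1) = 3$ and $h_Z(2) \ge 3$. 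Applying \refthm{CBprop} with $d = 4$, $k = 2$ yields
\[
h_Z(3) + h_Z(4) + h_Z(5) \ge 4 + h_Z(2) \ge 7,
\]
and combining with $\sum_i h_Z(i) = \ell_Z = 7 + s \le 14$ forces $s = 7$, $h_Z(2) = 3$, $\ell_Z = 14$, and $h_Z(i) = 0$ for $i \ge 6$. Applying \refthm{CBprop} with $k = 1$ gives $h_Z(4) + h_Z(5) \ge 4$, hence $h_Z(3) \le 3$ and $H_Z(3) \le 10$.

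From $h_Z(2) = 3$, three linearly independent quadrics $Q_1, Q_2, Q_3$ vanish on $Z$. Their common zero locus $V$ contains $Z$, and since $\ell_Z = 14 > 2^3$, $V$ has a $1$-dimensional part $C'$; write $V = C' \cup W$ with $W$ a $0$-dimensional residual. A standard argument (three linearly independent quadrics cannot contain a degree-$4$ curve, as quadrics through an elliptic quartic form only a pencil) shows $\deg C' \le 3$, and residual intersection gives $|W| \le 8 - 2 \deg C'$. Using the classical fact that the base locus of the cubics through any non-degenerate curve of degree $\le 3$ in $\mathbb{P}^{3}$ equals the curve itself, each point of $Z \cap W$ imposes an independent condition on cubics through $C'$, so
\[
H_Z(3) = H_{Z \cap C'}(3) + |Z \cap W|.
\]
For $\deg C' = 3$ this becomes $H_Z(3) = 10 + |Z \cap W|$, which combined with $H_Z(3) \le 10$ forces $Z \subset C'$. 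For $\deg C' \in \{1, 2\}$ the analogous analysis at degree $2$ yields a contradiction; for instance, when $C'$ is a line, $H_Z(2) = 3 + |Z \cap W| = 7$ gives $|Z \cap W| = 4$, but GLP of $A$ (at most two points on any line) demands $|A \cap W| \ge 5 > 4$, and the conic and pair-of-lines subcases are similar.

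With $Z \subset C'$ of degree $3$ established, the GLP of $A \subset C'$ excludes every reducible configuration: a plane cubic, a line plus a conic meeting on the conic, three concurrent lines, or any other non-degenerate reducible degree-$3$ curve fits at most six points of $A$ (using no three collinear and no four coplanar), contradicting $|A| = 7$. Hence $C'$ is an irreducible twisted cubic, and since $\ell_Z = 14$ points lie on this rational normal cubic the Hilbert function equals $\min(3d+1, 14)$, yielding the $h$-vector $(1, 3, 3, 3, 3, 1)$. The main obstacle is the coordinated case analysis in the second paragraph: simultaneously managing the GLP hypothesis on $A$, the residual length bound $|W| \le 8 - 2\deg C'$, the Hilbert function constraints derived from $\mathit{CB}(4)$, and the base-locus fact for cubics through low-degree curves, across the various degenerate configurations of $C'$.
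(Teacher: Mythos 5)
Your first paragraph follows the paper's argument (Cayley--Bacharach plus the Geramita--Kreuzer--Robbiano inequality forcing $s=7$, $\ell_Z=14$, $h_Z(2)=3$), but the justification of $H_A(2)=7$ is wrong as stated: \reflem{lab_symm_glp} is a statement about \emph{generic} configurations, whereas here $A$ is a specific set only assumed to be in GLP in $\Pj\C^4$, and GLP in $\Pj\C^4$ does not formally transfer to linear independence of $v_2(A)$ via that lemma. The correct argument is direct: for each $P\in A$ split the remaining six points into two triples and take the union of the two planes they span; GLP guarantees this quadric contains $A\setminus\{P\}$ and misses $P$, so quadrics separate $A$ and $H_A(2)=7$.

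Your second paragraph replaces the paper's key tool --- the maximal-growth theorem of Bigatti, Geramita, and Migliore, which from $h_Z(3)=h_Z(4)=3$ (respectively $h_Z(4)=h_Z(5)=2$) directly produces a cubic (respectively quadric) curve whose ideal agrees with that of a subset of $Z$ up to the relevant degree --- by a hands-on analysis of the net of quadrics through $Z$. This is a legitimate alternative route, but the load-bearing steps are asserted rather than proved, and at least one fails as written. The equalities $H_Z(e)=H_{Z\cap C'}(e)+|Z\cap W|$ require that the residual points impose \emph{independent} conditions on forms of degree $e$ through $C'$; a priori one only has the inequality $H_Z(e)\le H_{Z\cap C'}(e)+|Z\cap W|$. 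For the twisted-cubic case at degree $3$ this is harmless, since to conclude $Z\subset C'$ you only need the single-point statement that some cubic through $C'$ misses a given $P\notin C'$, which is the base-locus fact. But in the line case your contradiction hinges on the exact equality $H_Z(2)=3+|Z\cap W|$: with only the inequality you get $|Z\cap W|\ge 4$, which is perfectly consistent with $|A\cap W|\ge 5$ and $|W|\le 6$, so no contradiction is reached. (That case can be rescued differently: $|Z\cap C'|\ge 8$ forces at least six points of $B$ onto a line, making $v_4(B)$ linearly dependent and contradicting minimality of $B$ --- but that is not the argument you gave.) Further, the conic and pair-of-lines cases are waved off as ``similar,'' the possibility that $C'$ is degenerate, reducible, or non-reduced is not treated before the base-locus fact for ``non-degenerate curves of degree $\le 3$'' is invoked, and the bound $|W|\le 8-2\deg C'$ tacitly assumes the quadrics are irreducible and meet properly (true here, because a quadric splitting into planes holds at most six points of $A$, but this needs saying). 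These are exactly the places where the difficulty of the proposition lives, and they are what \cite[Theorem 3.6]{BGM1994} is imported to handle in the paper's proof.
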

\begin{proof} 
Since $A$ is in GLP, the $h$-vector of $A$ is $(1,3,3)$.
Indeed, $h_A(0)=1$ is obvious, while $h_A(1)=3$ because $A$ spans $\Pj\C^4$.
So by \refprop{easyh} it just remains to prove that $H_A(2)=7$. For any $P\in A$, 
divide the remaining $6$ points in two set of three points each, and then take the two
planes spanned by the two sets. As $A$ is in GLP, no four points of $A$ 
belong to a plane, so that the two planes define a quadric that contains $A\setminus\{P\}$
and misses $P$. Thus, $A$ is separated by quadrics and $H_A(2)=7$ by Remark \ref{separ}.

$Z$ satisfies $\mathit{CB}(4)$ by \refprop{prop_CB}, and hence, by \refthm{CBprop},
\begin{align*}
h_Z(5) &\ge h_Z(0) = 1, \\
h_Z(4)+h_Z(5) &\ge h_Z(0)+h_Z(1) = 4, \text{ and }\\
h_Z(3)+h_Z(4)+h_Z(5) &\ge h_Z(0)+h_Z(1)+h_Z(2) = 4+h_Z(2).
\end{align*}
Since $h_Z(2)\geq h_A(2)=3$ by \refprop{subb},
$\ell_Z\geq \sum_{i=0}^5 h_Z(i) \geq 14$ so that $s\geq 7$. It follows that $s=7$ and 
$\ell_Z=\sum_{i=0}^5 h_Z(i)=H_Z(5)=14$, and, hence, $h_Z(2)=3$.
In particular $H_Z(2)=7$, so $Z$ is contained in three linearly independent {quadric} surfaces.
Clearly these quadric surfaces cannot meet in a finite number of points, since
$\ell_Z>8$. We will prove that $C$ is a twisted cubic curve that contains $Z$.

Notice that $h_Z(3)$ cannot be bigger that $3$, because $h_Z(4)+h_Z(5)\geq 4$.
If $h_Z(3)\leq 2$, then by \cite[Theorem 3.6]{BGM1994} and its proof
one has also $h_Z(4),h_Z(5)\leq 2$, contradicting
$h_Z(3)+h_Z(4)+h_Z(5)\geq h_Z(2)+h_Z(1)+h_Z(0)=7.$ Hence $h_Z(3)=3$.
It also follows that $h_Z(4)+h_Z(5)\leq 4$. Thus, equality holds. 
If $h_Z(4)\leq 1$ then also $h_Z(5)\leq 1$ by \cite[Theorem 3.6]{BGM1994} again,
which is a contradiction. 
Hence, there are only two possibility left for the $h$-vector of $Z$, namely
\(h_Z=(1,3,3,3,2,2)\) or \(h_Z=(1,3,3,3,3,1)\).
Next, we use again \cite[Theorem 3.6]{BGM1994}. In the former case, since $h_Z(4)=h_Z(5)=2$, then
there exists a curve $C$ of degree $2$ containing a subset $Z'\subset Z$,
and the ideal of $C$ coincides with the ideal  of $Z'$ up to degree $5$.
If $C$ is a conic, then it must contain at least $11$ points of $Z'$, hence
at least $4$ points of $A$, which is impossible since a conic
is a plane curve and $A$ is in GLP. If $C$ is a disjoint union of lines
then it must contain at least $12$ points of $Z$, hence at least $5$ points of $A$,
which is excluded since $A$ has no three points on a line.

We can conclude that the $h$-vector of $Z$ is $(1,3,3,3,3,1)$, so $h_Z(3)=h_Z(4)=3$. Then, 
by \cite[Theorem 3.6]{BGM1994} there exists a cubic curve $C$ which contains a subset $Z'$ of $Z$ whose
ideal coincides with the ideal of $C$ up to degree $4$. If $C$ is a plane curve,
then its $h$-vector is $(1,2,3,3,3,\dots)$, so $Z'$ can miss at most $2$ points of $Z$,
which contradicts again the GLP of $A$. If $C$ spans $\Pj\C^4$, then the $h$-vector of $C$ 
is $(1,3,3,3,3,\dots)$ and the homogeneous ideal is generated in degree at most $3$. So,
if $C$ misses some points of $Z$, then $h_Z(3)>h_C(3)=3$, which is a contradiction.
Thus $C$ contains $Z$, hence it contains $A$.

It remains to show that $C$ is irreducible. $C$ cannot split in three lines, for one line
would then contain three points of $A$. If it splits in a line and an irreducible (plane) conic,
then either there exists a line containing three points of $A$, or $5$ points of $A$ lie in the
plane of the conic. Both situations contradict the GLP of $A$.
\end{proof}

\begin{proof}[Proof of \refprop{prop_disjointAB}]
Suppose that in \emph{every} alternative decomposition $B$ of cardinality equal to the rank $s\le 7$ of $\tensor{A}$ some of the points appear in both 
$A$ and $B$, say $A \cap B = \{[\vect{v}_1],\ldots,[\vect{v}_k]\}$ with $k > 0$. Then
\[
 \tensor{A} 
 = \vect{v}_1^{\circ 4} + \vect{v}_2^{\circ 4} + \cdots + \vect{v}_7^{\circ 4}
 = \lambda_1 \vect{v}_1^{\circ 4} + \cdots + \lambda_k \vect{v}_k^{\circ 4} + \vect{w}_{k+1}^{\circ 4} + \cdots + \vect{w}_s^{\circ 4}.
\]
It follows that
\[
 \tensor{A}' = (1-\lambda_1)\vect{v}_{1}^{\circ 4} + \cdots + (1-\lambda_k) \vect{v}_k^{\circ 4} + \vect{v}_{k+1}^{\circ 4} + \cdots + \vect{v}_7^{\circ 4} 
= \vect{w}_{k+1}^{\circ 4} + \cdots + \vect{w}_s^{\circ 4}.
\]

If any of the $\lambda_j$ are equal to $1$, then $\tensor{A}'$ would be an identifiable tensor because 
of Kruskal's theorem and the assumption that $A$ is in GLP. It follows that $s\ge7$, hence, $s=7$.
Comparing the lengths of the decompositions of $\tensor{A}'$, it follows that all $\lambda_j = 1$. But then 
$\{[\vect{v}_{k+1}], \ldots, [\vect{v}_7]\} = \{ [\vect{w}_{k+1}], \ldots, [\vect{w}_7]\}$ because of the identifiability
of $\tensor{A}'$. 
This implies the decompositions $A$ and $B$ of $\tensor{A}$ consist of the same set of points: $A = B$. 
By the assumption on minimality of $A$, it follows that $\tensor{A}$ is identifiable as well,
which contradicts our assumption. 

So, none of the $\lambda_j$ are equal to $1$. Then $\tensor{A}'$ has two decompositions, $A$ is still in 
GLP, and we let $B' = B \setminus A = \{[\vect{w}_{k+1}], \ldots, [\vect{w}_s]\}$ and $Z' = A \cup B'$. 
Applying \refprop{prop_twisted_cubic} to $Z'$ yields that $\tensor{A}'$  has alternative decompositions 
only if $|B'| = 7$, requiring $s \ge 8 \not\le 7$, contradicting the assumption that $B$ was of minimal cardinality. 

This proves that if $\tensor{A}$ is not $7$-identifiable with $A$ in GLP, then there must exist at least
one set of points $B$ such that $A \cap B = \emptyset$ and $\tensor{A} \in \langle v_d(A) \rangle \cap \langle v_d(B)\rangle$.
\end{proof}

\begin{proposition} \label{prop_criterion}
If $A$ is contained in an irreducible  rational twisted cubic curve $C$,
then $\tensor{A}$ is not identifiable, and the given  decomposition of $\tensor{A}$
is contained in a positive dimensional family of decompositions. In other words,
there exists a positive dimensional family of subsets $A_t$ of cardinality $7$
in $v_4(\Pj\C^4)$, with $A_0=A$, such that  $\tensor{A}$ belongs to the span 
of each $v_4(A_t)$.
\end{proposition}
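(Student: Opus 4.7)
The plan is to reduce the claim to Sylvester's classical theory of Waring decompositions of binary forms, exploiting that the restriction of $v_4$ to the twisted cubic $C$ factors through a degree-$12$ Veronese embedding of $\Pj\C^2$. Up to a projective change of coordinates, $C = v_3(\Pj\C^2) \subset \Pj\C^4$ sends $[s:t] \mapsto [s^3:s^2t:st^2:t^3]$. A direct monomial calculation shows that $v_4 \circ v_3$ sends $[s:t]$ to a symmetric tensor all of whose $35$ coordinates are monomials of degree $12$ in $s,t$; moreover every monomial $s^{12-i}t^i$ for $i=0,\ldots,12$ occurs, so these $13$ linearly independent vectors span a subspace $W \subset S^4 \C^4$ linearly isomorphic to $S^{12}\C^2$. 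Consequently $R := v_4(C)$ is a rational normal curve of degree $12$ in its linear span $\Lambda := \Pj W \cong \Pj S^{12}\C^2$.

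Since $A$ is in GLP and contained in $C$, its $7$ preimages in $\Pj\C^2$ are distinct. Under the identification $\Lambda \cong \Pj S^{12}\C^2$ the tensor $\tensor{A}$ corresponds to a binary form $f \in S^{12}\C^2$, and the given decomposition pulls back to a Waring decomposition $f = c_1 \ell_1^{12} + \cdots + c_7 \ell_7^{12}$ with pairwise non-proportional $\ell_i \in S^1\C^2$. Conversely, any $7$-tuple of distinct points of $C$ whose images under $v_4$ have $\tensor{A}$ in their span yields a rank-$7$ Waring decomposition of $f$. I would then invoke Sylvester's apolarity theorem \cite{IK1999}: the apolar ideal $f^\perp$ of a binary form of degree $2k = 12$ is a complete intersection of type $(a,b)$ with $a \le b$ and $a + b = 14$; if $a < b$ then the rank of $f$ equals $a$ when the degree-$a$ generator of $f^\perp$ has distinct roots and equals $b$ otherwise, while if $a = b$ the rank equals $a$. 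Because $f$ has rank exactly $7$, the cases $a < 7$ are excluded, forcing $a = b = 7$ and $\dim (f^\perp)_7 = 2$.

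The projectivized pencil $\Pj((f^\perp)_7) \cong \Pj\C^2$ therefore parametrizes a one-parameter family of degree-$7$ binary forms, and by the apolarity lemma each such form with $7$ distinct roots yields a rank-$7$ Waring decomposition of $f$ supported on those roots. The forms with distinct roots constitute a Zariski-open subset of the pencil containing the element corresponding to the given decomposition of $\tensor{A}$. Pulling back through $\Lambda \cong \Pj S^{12}\C^2$ and the parametrization $\Pj\C^2 \to C$ produces the desired one-parameter family $\{A_t\}$ of $7$-point subsets of $C$, with $A_0 = A$ and $\tensor{A} \in \langle v_4(A_t)\rangle$ for each $t$ in a Zariski-open neighborhood of $0$. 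The main obstacle is the first step: verifying rigorously that $v_4|_C$ is linearly equivalent to the full degree-$12$ Veronese of $\Pj\C^2$, i.e., identifying the plethystic inclusion $S^{12}\C^2 \hookrightarrow S^4(S^3\C^2)$ and transporting Sylvester's pencil back along it so that the resulting $A_t$ genuinely lie on $C$ and specialize to $A$ at $t=0$. Once that identification is in place, Sylvester's theorem delivers the positive-dimensional family immediately.
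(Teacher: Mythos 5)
Your proposal is correct, and its core reduction is the same as the paper's: both arguments identify $v_4(C)$ with a rational normal curve of degree $12$ spanning a $\Pj^{12}$, so that everything reduces to a classical statement about binary forms of degree $12$. Where you differ is in how that statement is established. The paper counts dimensions: the $7$-secant variety of the degree-$12$ rational normal curve fills its span, while the abstract $7$-secant variety has dimension $13$, so every fibre of the projection---in particular the one through the given decomposition---is positive dimensional. You instead run Sylvester's apolarity theory explicitly: the product $g_A$ of the seven linear forms corresponding to $A$ lies in $(f^\perp)_7$, that graded piece contains a pencil, and the squarefree members of a pencil through $g_A$ give the moving sets $A_t$. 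Your route is more constructive (it exhibits the family as the root loci of an explicit pencil, and it makes the passage ``through the given decomposition'' transparent, which the paper leaves implicit), at the price of invoking the structure of the apolar ideal; the paper's route needs only nondefectivity of curves and semicontinuity of fibre dimension. One small point: you assert without proof that $f$ has rank exactly $7$ in order to force the apolar ideal to be a complete intersection of type $(7,7)$. The claim is true---any collection of at most $13$ distinct points on the degree-$12$ rational normal curve is linearly independent, so a $7$-term decomposition with distinct points and nonzero coefficients cannot be shortened---but it is also unnecessary: for every binary form $f$ of degree $12$ the catalecticant map $S^7\C^2 \to (S^5\C^2)^{\ast}$ has kernel of dimension at least $2$, so $(f^\perp)_7$ always contains a pencil through $g_A$, and the rest of your argument goes through verbatim.
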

\begin{proof}
The twisted cubic is itself the image of a Veronese map
$C=v_3(\Pj\C^2)$, thus $v_4(C)=v_{12}(\Pj\C^2)$ is a rational normal curve in $\Pj\C^{13}$.
The secant variety $\Sec{12}{\Pj\C^2}$ covers $\Pj\C^{13}$ and every rank-$7$ point of $\Pj\C^{13}$
is contained in an $1$-dimensional family of $7$ secant spaces. Thus when
$A$ is contained in a twisted cubic, then $\tensor{A}$ lies into the space $\Pj\C^{13}$ 
spanned by $v_4(C)=v_{12}(\Pj\C^2)$ and consequently it has infinitely many 
decompositions as a sum of $7$ tensors of rank $1$, lying in $v_4(C)$. Thus
there exists a $1$-dimensional family of decompositions for $\tensor{A}$ which includes $A$.
\end{proof}

Verifying that there does not exist a positive dimensional family of alternative decompositions over $\F$ may
be accomplished by exploiting the following result, which is essentially implicit in Terracini's paper \cite{Terracini1911}. 

\begin{lemma}\label{lem_terracini_criterion}
Let $\Var{V} \subset \F^N$ be an affine variety that is not $r$-defective. Let the points $p_1, \ldots, p_r \in \Var{V}$, and let 
$\Tang{p_i}{\Var{V}} \subset \F^N$ denote the affine tangent space to $\Var{V}$ at $p_i$. If the $p_i$'s are contained in a
family of decompositions of positive dimension, then $\dim \langle \Tang{p_1}{\Var{V}}, \ldots, \Tang{p_r}{\Var{V}} \rangle < \dim \Sec{r}{\Var{V}}$. 
\end{lemma}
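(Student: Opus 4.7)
The plan is a standard invocation of Terracini's lemma combined with upper-semicontinuity of fiber dimension. Consider the addition morphism
\[
\mu: \Var{V}^{\times r} \longrightarrow \Sec{r}{\Var{V}} \subset \F^N, \qquad (q_1,\ldots,q_r) \longmapsto q_1 + \cdots + q_r,
\]
which is dominant by the definition of $\Sec{r}{\Var{V}}$. A decomposition $\tensor{A} = p_1 + \cdots + p_r$ corresponds to a point of $\mu^{-1}(\tensor{A})$, and a positive-dimensional family of decompositions containing $(p_1,\ldots,p_r)$ corresponds to a positive-dimensional component of this fiber through $(p_1,\ldots,p_r)$.

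First I would compute the differential of $\mu$ at $(p_1,\ldots,p_r)$, assuming each $p_i$ is a smooth point of $\Var{V}$---a harmless assumption since the singular locus is a proper subvariety. A tangent vector $(v_1,\ldots,v_r) \in \Tang{p_1}{\Var{V}} \oplus \cdots \oplus \Tang{p_r}{\Var{V}}$ is sent by $d\mu$ to $v_1 + \cdots + v_r \in \F^N$, so the image of $d\mu_{(p_1,\ldots,p_r)}$ is precisely $\langle \Tang{p_1}{\Var{V}}, \ldots, \Tang{p_r}{\Var{V}}\rangle$, and
\[
\operatorname{rank} d\mu_{(p_1,\ldots,p_r)} = \dim \langle \Tang{p_1}{\Var{V}}, \ldots, \Tang{p_r}{\Var{V}}\rangle.
\]

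Next I would invoke non-$r$-defectiveness of $\Var{V}$. By Terracini's lemma applied at a generic $r$-tuple $(q_1,\ldots,q_r)$, the span $\langle \Tang{q_1}{\Var{V}}, \ldots, \Tang{q_r}{\Var{V}}\rangle$ equals the tangent space of $\Sec{r}{\Var{V}}$ at $q_1 + \cdots + q_r$; by non-defectiveness this span attains the expected dimension $\dim \Sec{r}{\Var{V}} = r \dim \Var{V}$. Hence the generic, and maximum, rank of $d\mu$ is $\dim \Sec{r}{\Var{V}}$, and the fiber-dimension theorem gives that the generic fiber of $\mu$ is zero-dimensional.

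Finally, the hypothesis that $(p_1,\ldots,p_r)$ lies in a positive-dimensional family of decompositions says that the fiber through it has strictly larger dimension than the generic, zero-dimensional, fiber. Upper-semicontinuity of fiber dimension---equivalently the rank-nullity identity at smooth points of the fiber---then forces $\operatorname{rank} d\mu_{(p_1,\ldots,p_r)} < \dim \Sec{r}{\Var{V}}$, which by the first step is exactly the desired inequality. The only real subtlety, rather than an obstacle, is reading ``positive-dimensional family'' correctly when the generic fiber is itself positive-dimensional (i.e.\ when $r$ is not subgeneric): there it must mean dimension strictly exceeding the expected $r \dim \Var{V} - \dim \Sec{r}{\Var{V}}$. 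This case does not arise in the paper's application to $r=7$ in $S^4\F^4$, where $r=7 < \overline{r}_\Var{V} = 8$ lies in the subtypical range.
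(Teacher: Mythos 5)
Your proof is correct, but it is packaged differently from the paper's. The paper argues directly on a curve of decompositions: it takes an analytic curve $p=\sum_i p_i(t)$ with $p_i(0)=p_i$ along which the tensor is constant, Taylor-expands each $p_i(t)$ using \cite[Lemma 2.1]{Landsberg2006} so that the first-order coefficients lie in $\Tang{p_i}{\Var{V}}$, and extracts the relation $\sum_i p_i^{(1)}=0$; this single linear syzygy among the tangent spaces already forces $\dim\langle \Tang{p_1}{\Var{V}},\ldots,\Tang{p_r}{\Var{V}}\rangle$ below the expected dimension, and non-defectiveness converts ``expected'' into $\dim\Sec{r}{\Var{V}}$. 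You instead phrase the same first-order mechanism through the addition map $\mu:\Var{V}^{\times r}\to\F^N$: the image of $d\mu$ at $(p_1,\ldots,p_r)$ is the span of the tangent spaces, tangent directions of the positive-dimensional family sit in $\ker d\mu$, and rank--nullity at a point where all $p_i$ are smooth gives the strict drop, while Terracini plus non-defectiveness identifies the generic rank with $\dim\Sec{r}{\Var{V}}$. The two arguments are the same computation in different clothing (the paper's curve is exactly a kernel vector of $d\mu$), but each phrasing buys something: the paper's expansion applies verbatim to an analytic family without discussing whether it is algebraic, and in fact yields relations $\sum_i p_i^{(k)}=0$ at every order; your version makes explicit where the hypotheses enter---smoothness of the $p_i$ (automatic in the application, since the cone over $v_4(\Pj\C^4)$ is smooth away from the vertex) and the subgeneric reading of the statement when the generic fiber of $\mu$ is itself positive-dimensional, a caveat the paper leaves implicit and which you correctly observe is needed for the lemma to be sharp as stated. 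One small imprecision: upper-semicontinuity of fiber dimension alone does not bound the rank of $d\mu$ at the special point; the operative step is precisely your kernel argument (tangent vectors to the family annihilated by $d\mu$) together with rank--nullity at $(p_1,\ldots,p_r)$ as a smooth point of $\Var{V}^{\times r}$, not smoothness of the fiber---but since you state that argument as well, this does not affect correctness.
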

\begin{proof}
Let $p = \sum_{i=1}^{r} p_i(t)$ with $p_i(0) = p_i$ and $t$ in a neighborhood of zero be a smooth curve passing through the $p_i$'s along which $p$ remains constant. As $\Var{V}$ is a variety, the Taylor series expansion of this analytic curve is well-defined and by \cite[Lemma 2.1]{Landsberg2006} may be written as
\(
 p_i(t) = p_i + t p_i^{(1)} + t^2 p_i^{(2)} + \cdots
\)
with $p_i^{(1)} \in \Tang{p_i}{\Var{V}}$ and $p_i^{(k)} \in \F^N$. After grouping terms by powers of $t$, we have
\[
 p = p + t \sum_{i=1}^r p_i^{(1)} + t^2 \sum_{i=1}^r p_i^{(2)} + \cdots.
\]
Since this holds for all $t$ in a neighborhood of $0$, it follows that $\sum_{i=1}^r p_i^{(k)} = 0$ for all $k$. In particular the case $k=1$ entails that $\dim \langle \Tang{p_1}{\Var{V}},\ldots,\Tang{p_r}{\Var{V}}\rangle$ is strictly less than the expected dimension of $\Sec{r}{\Var{V}}$. By assumption on $\Var{V}$, this concludes the proof.
\end{proof}

By Terracini's Lemma \cite{Terracini1911} we know that if the $(p_1,\ldots,p_r)$ are generic and $\Var{V}$ is 
nondefective, then $\dim \langle \Tang{p_1}{\Var{V}}, \ldots, \Tang{p_r}{\Var{V}} \rangle = \dim \Sec{r}{\Var{V}}$ 
so that the foregoing lemma can effectively exclude the possibility that such a positive dimensional family exists. 
The next sufficient condition for specific $7$-identifiability in $S^4 \C^4$ is then obtained.

\begin{proposition}\label{prop_sufficient}
Let $\F = \R$ or $\C$.
Let $p_i = \lambda_i \sten{a}{i}{\circ 4} \in v_4(\F^4) \subset S^4\F^4$, with $\lambda_i \in \F$ and $\sten{a}{i}{}\in\F^4$ for $i=1,\ldots,7$, be given in the form of a \emph{factor matrix}
\(A=\left[\begin{smallmatrix}\lambda_1^{1/4}\sten{a}{1}{ } & \cdots & \lambda_7^{1/4}\sten{a}{7}{ } \end{smallmatrix}\right].\)
If $A$ is in GLP, $A\odot A\odot A\odot A$ is of rank $7$, and there does not exist a family of alternative complex decompositions
passing through $A$, then $\tensor{A}=\sum_{i=1}^7 \lambda_i \sten{a}{i}{\circ 4} \in S^4\F^4$ is $7$-identifiable over $\C$, and, hence, $7$-identifiable over $\F$.
\end{proposition}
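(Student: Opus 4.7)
The plan is to argue by contradiction, assembling the sequence of propositions proved earlier in this section into a single chain that rules out every alternative complex decomposition.

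First I would verify that the given decomposition is \emph{minimal} in the sense required by the earlier discussion. The condition that $A \odot A \odot A \odot A$ has rank $7$ is equivalent, via \refprop{HilbVer}, to $H_A(4)=7$, i.e.\ that the points $v_4([\sten{a}{i}{}])$ are linearly independent in $\Pj S^4\C^4$. Hence $\tensor{A}$ cannot lie in the span of any proper subset of $\{\sten{a}{1}{\circ 4},\ldots,\sten{a}{7}{\circ 4}\}$, so the decomposition is minimal, and the symmetric rank of $\tensor{A}$ over $\C$ is exactly $7$.

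Next I would assume for contradiction that $\tensor{A}$ admits a second minimal complex decomposition of length $s \le 7$, say $\tensor{A} = \vect{w}_1^{\circ 4} + \cdots + \vect{w}_s^{\circ 4}$, and set $B = \{[\vect{w}_1],\ldots,[\vect{w}_s]\}$ and $Z = A \cup B$. Since $A$ is in GLP by hypothesis and the first decomposition is minimal, the standing assumptions of the preceding analysis are in force. By \refprop{prop_disjointAB} we may choose $B$ so that $A \cap B = \emptyset$. Then \refprop{prop_CB} forces $Z$ to satisfy the Cayley--Bacharach property $\mathit{CB}(4)$, and \refprop{prop_twisted_cubic} forces $s = 7$, the $h$-vector of $Z$ to equal $(1,3,3,3,3,1)$, and $Z$ to be contained in an irreducible twisted cubic curve $C \subset \Pj\C^4$. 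In particular, $A \subset C$.

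Once $A$ is known to lie on an irreducible twisted cubic, \refprop{prop_criterion} produces a positive-dimensional family $\{A_t\}$ of subsets of $v_4(\Pj\C^4)$ of cardinality $7$ with $A_0 = A$ and $\tensor{A} \in \langle v_4(A_t)\rangle$ for all $t$; that is, a positive-dimensional family of alternative complex decompositions of $\tensor{A}$ passing through $A$. This contradicts the third hypothesis of the proposition. Hence no second decomposition exists over $\C$, so $\tensor{A}$ is $7$-identifiable over $\C$.

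Finally, identifiability over $\R$ is an immediate consequence: any real decomposition of $\tensor{A}$ of length $\le 7$ is in particular a complex decomposition, so by complex $7$-identifiability it must coincide (up to permutation and the standard scaling) with the given one, which is real. The main step is really the chain of previous results; the only thing to verify carefully is that the algebraic hypotheses of those propositions (minimality of the given decomposition, GLP of $A$, and disjointness of $A$ and $B$) are all available, and I would expect no further obstacle beyond this bookkeeping.
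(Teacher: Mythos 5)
Your argument is correct and follows essentially the same route as the paper: the paper's own proof is just a terse reduction to \refprop{prop_criterion} (after normalizing the $\lambda_i$, and with the same observation for the real case), and the chain through \refprop{prop_disjointAB}, \refprop{prop_CB} and \refprop{prop_twisted_cubic} that you spell out is exactly the section's intended logic behind that reduction. One small nitpick: your aside that the symmetric rank of $\tensor{A}$ is exactly $7$ does not follow from minimality alone at the point where you assert it (it is only a consequence of the full contradiction argument, since \refprop{prop_twisted_cubic} rules out shorter decompositions), but nothing in your proof actually uses that claim.
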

\begin{proof}
For $\F=\C$, we can assume without loss of generality that all $\lambda_i = 1$. The result then follows from \refprop{prop_criterion}.

For $\F=\R$, it suffices to note that we can apply \refprop{prop_criterion} to every complex decomposition of length $7$, 
in particular we can apply it to the right-hand side of
\[
 \tensor{A} = \sum_{i=1}^7 \lambda_i \sten{a}{i}{\circ 4} = \sum_{i=1}^7 (\lambda_i^{1/4} \sten{a}{i}{})^{\circ 4} \in S^4 \R^4,
\]
which in general is a complex Waring decomposition. If the conditions of the proposition are satisfied for the decomposition on the right-hand side, then it also proves that the corresponding real Waring decomposition is the unique complex decomposition of $\tensor{A}$, and, hence, it is the unique decomposition over both $\R$ and $\C$.
\end{proof}

\subsection{The algorithm}
Assume that we are given a decomposition
\[
\tensor{A} = \sum_{i=1}^r p_i = \sum_{i=1}^r \lambda_i \sten{a}{i}{\circ 4} \in S^4 \F^4
\]
with $\lambda_i \in \F$ and $\sten{a}{i}{ } \in \F^4$, $i=1,\ldots,r$, in the form of a matrix $A = \left[\begin{smallmatrix}\lambda_1^{1/4}\sten{a}{1}{ } & \cdots & \lambda_1^{1/4}\sten{a}{r}{ }\end{smallmatrix}\right] \in \C^{4 \times r}$. Then, the following steps should be taken.

\begin{enumerate}
 \item[S1.] If $r \ge 8$, the algorithm terminates claiming that it can not prove the identifiability of $\tensor{A}$.
 \item[S2.] If $r = 1$, the algorithm terminates and if $\sten{a}{1}{} \ne 0$ it states that $\tensor{A}$ is $1$-identifiable; otherwise if $\sten{a}{1}{} = 0$, it states that $\tensor{A}$ is not $1$-identifiable.
 \item[S3.] If $2 \le r \le 6$, perform the following steps:
 \begin{enumerate}
  \item[S3a.] Compute the Kruskal ranks $\kappa_1$ and $\kappa_2$ of $A$ and $A \odot A$ respectively.
  \item[S3b.] If $r \le \kappa_1 + \tfrac{1}{2} \kappa_2 - 1$, then the algorithm terminates stating that $\tensor{A}$ is $r$-identifiable. Otherwise it terminates, unable to prove identifiability.
 \end{enumerate}
 \item [S4.] If $r = 7$, perform the following steps:
 \begin{enumerate}
  \item[S4a.] Compute $A \odot A \odot A \odot A$ and verify that its rank equals $7$. If it does not, the algorithm terminates stating that $\tensor{A}$ is not $7$-identifiable.
  \item[S4b.] Compute the Kruskal rank of $A$. If it is not $4$, the algorithm terminates claiming that it cannot prove identifiability.
  \item[S4c.] Let $T_i$ be a basis for the tangent space $\Tang{p_i}{v_4(\C^4)}$. Compute the rank of $T = \left[\begin{smallmatrix} T_1 & \cdots & T_r \end{smallmatrix}\right]$. If it does not equal $21$, then the algorithm terminates claiming that it cannot prove $7$-identifiability.
  \item[S4d.] The algorithm terminates, stating that $\tensor{A}$ is $7$-identifiable.
 \end{enumerate}
\end{enumerate}

The ancillary file \texttt{identifiabilityS4C4.m2} contains an implementation of this algorithm in Macaulay2 \cite{M2}. 

\begin{proof}[Proof of \refthm{thm_effective_crit}]
The fact that the above algorithm is effective for all tensors in $S^4 \F^4$ is trivial for $r=1$; it follows from \refcor{cor_symm_reshaped_Kruskal} for $2 \le r \le 6$; for $r=7$ it follows from the fact that the assumptions leading to \refprop{prop_criterion}, namely GLP and minimality, fail only on Zariski-closed sets as well as the fact that $\Pj v_4(\C^4)$ is not defective for $r=7$ \cite{AH1995} so that the dimension condition in \reflem{lem_terracini_criterion} is only satisfied on a Zariski-closed set---effectiveness in the real case follows from the foregoing, \cite{QCL2016} and the fact that $\Pj v_4(\C^4)$ is not $7$-defective; and for $r\ge8$ the generic element of $\Sec{r}{\Var{V}}$ is not complex $r$-identifiable.
\end{proof}

\subsection{Two example applications of the algorithm}
We present two cases illustrating the foregoing algorithm in the original case $r = 7$.
\paragraph{An identifiable example}
Consider a real Waring decomposition of length $7$ that was randomly generated in Macaulay2:
\[
\tensor{A} = \sum_{i=1}^7 \sten{a}{i}{\circ 4},
\quad\text{with }
A = 
\begin{bmatrix}
 \sten{a}{i}{}
\end{bmatrix}_{i=1}^7
=
\begin{bmatrix}
 5  & -3 & 1 &  7 &  3 &  1 & -9 \\
 0  & 9  & 1 &  2 &  8 & -2 &  6 \\
 -8 & 5  & 5 & -3 & -4 & -6 & -8 \\
 3  & 7  & 9 & -3 &  8 &  7 & -7 
\end{bmatrix}.
\]
Executing the algorithm, we can skip steps S1--S3 and immediately move to step S4a. Using the functions in the \texttt{reshapedKruskal.m2} ancillary file,
the rank of $A\odot A\odot A\odot A$ is computed by \verb|rank(kr(A,{0,0,0,0}))|. It is $7$, so we proceed with step S4b. The 
Kruskal rank of $A$, which consists of computing the rank of $35$ $7\times7$ matrices, is $4$, as determined by the code fragment \texttt{kruskalRank(A)}. In step S4c, we compute rank of the $35 \times 28$ matrix $T$ whose columns span a subspace of the tangent space to $\Sec{r}{\Var{S}_\C}$ at $\tensor{A}$. The rank of this matrix is the maximal value $28$, so by \refprop{prop_sufficient} we may conclude that there is just one complex Waring decomposition. Since we started from a real decomposition, it follows that this is the unique Waring decomposition of $\tensor{A}$.

\paragraph{A nonidentifiable example}
The following classical lemma gives infinitely many Waring decompositions of the degree $12$ binary form $(x^2+y^2)^6$. The seven summands correspond to seven consecutive vertices of a regular $14$-gon in the Euclidean plane with coordinates $(x,y)$.
\begin{lemma}[{Reznick \cite[Theorem 9.5]{Rez1992}}]\label{lem:rez}
Let $R=2^{-12}\left[7\binom{12}{6}\right]$.
Then $\forall\phi\in{\mathbb R}$:
$$
\sum_{k=0}^6\left[\cos\Bigl(\frac{k\pi}{7}+\phi\Bigr)x+\sin\Bigl(\frac{k\pi}{7}+\phi\Bigr)y\right]^{12}=R(x^2+y^2)^6.
$$
These decompositions are minimal, in the sense that $\mathrm{rank}_{\C}\left[(x^2+y^2)^6\right] = 7.$
\end{lemma}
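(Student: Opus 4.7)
The lemma splits into two claims: the polynomial identity, and the rank statement $\mrank{\C}{(x^2+y^2)^6}=7$. I would address them independently.

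For the identity, my plan is to expand each summand by the binomial theorem and swap the order of summation, writing the left-hand side as $\sum_{j=0}^{12}\binom{12}{j}S_j\,x^{12-j}y^j$ where $S_j = \sum_{k=0}^{6}\cos^{12-j}(\theta_k)\sin^j(\theta_k)$ and $\theta_k = k\pi/7+\phi$. Converting $\cos$ and $\sin$ to complex exponentials via $\cos\theta = (e^{i\theta}+e^{-i\theta})/2$ and $\sin\theta = (e^{i\theta}-e^{-i\theta})/(2i)$, each $S_j$ becomes a linear combination of geometric sums $\sum_{k=0}^{6}e^{imk\pi/7}$ weighted by $e^{im\phi}$, with $m = 12-2a-2b \in\{-12,-10,\dots,10,12\}$. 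Such a geometric sum vanishes whenever $14\nmid m$, so only the diagonal contribution $m=0$ survives and the $\phi$-dependence disappears, leaving $S_j = (7/2^{12})\,i^{-j}\,[x^6]\bigl((1+x)^{12-j}(1-x)^j\bigr)$. For odd $j$ the bracketed coefficient vanishes by the involution $b\leftrightarrow j-b$. For even $j=2m$, the even-index sum equals $\tfrac{1}{2}\bigl(((1+x)+(1-x)s)^{12}+((1+x)-(1-x)s)^{12}\bigr)$ with $s^2=-y$; rewriting $((1+x)\pm(1-x)s)^{12} = ((1\pm s)+(1\mp s)x)^{12}$ and extracting the $x^6$-coefficient gives $\binom{12}{6}(1-s^2)^6 = \binom{12}{6}(1+y)^6$. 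Identifying the coefficient of $y^m$ then shows $\binom{12}{2m}S_{2m} = R\binom{6}{m}$, matching coefficient-by-coefficient with $R(x^2+y^2)^6 = R\sum_m\binom{6}{m}x^{12-2m}y^{2m}$.

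For the rank statement, I would invoke Sylvester's apolarity theorem for binary forms. Over $\C$, set $u=x+iy$ and $v=x-iy$ so that $(x^2+y^2)^6 = u^6 v^6$. In the dual $u$-$v$ basis, the apolar ideal of $u^a v^b$ is the complete intersection $(u^{a+1},v^{b+1})$; hence $\mathrm{Ann}(u^6v^6)=(u^7,v^7)$. By Sylvester's theorem, $\mrank{\C}{u^6 v^6}$ equals the smallest degree of a squarefree form in $\mathrm{Ann}(u^6 v^6)$, and any $\alpha u^7 + \beta v^7$ with $\alpha\beta\ne 0$ splits into $7$ distinct linear factors given by the $7$ complex $7$-th roots of $-\beta/\alpha$. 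Hence the complex rank is exactly $7$, confirming minimality of the stated real decompositions.

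The main obstacle is the polynomial identity in the first part: the cancellation $\sum_{k=0}^{6}e^{imk\pi/7}=0$ for even $m\not\equiv 0\pmod{14}$ is immediate from the geometric sum, but extracting the correct normalization $\binom{12}{2m}(-1)^m[x^6]\bigl((1+x)^{12-2m}(1-x)^{2m}\bigr) = \binom{12}{6}\binom{6}{m}$ is what actually matches the target, and this requires the generating-function manipulation above (or an equivalent case-by-case check for $m=0,\ldots,6$). The rank part is routine once Sylvester's theorem and the apolar ideal of $u^av^b$ are in hand.
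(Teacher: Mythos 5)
Your proposal is correct, but it takes a different route from the paper for the simple reason that the paper offers no proof at all: the lemma is quoted verbatim from Reznick \cite[Theorem 9.5]{Rez1992}, where both the identity and the minimality statement come out of his general theory of representations of $(x^2+y^2)^t$ as sums of $2t$-th powers of linear forms at equally spaced angles. Your argument is a self-contained verification. The harmonic part is sound: after binomial expansion the only frequencies that occur are the even integers $m=12-2a-2b$ with $|m|\le 12$, the geometric sum $\sum_{k=0}^{6}e^{imk\pi/7}$ vanishes for all such $m\ne 0$ (which is also exactly why the $\phi$-dependence disappears), the involution $b\leftrightarrow j-b$ does kill the odd-$j$ coefficients since $\binom{12-j}{6-b}\binom{j}{b}$ is invariant while the sign flips, and the generating-function identity $\binom{12}{2m}(-1)^m[x^6]\bigl((1+x)^{12-2m}(1-x)^{2m}\bigr)=\binom{12}{6}\binom{6}{m}$ is correct (it follows from your even-part extraction, and a direct check at $m=0,1,2,3$ plus the symmetry $m\leftrightarrow 6-m$ confirms it), giving precisely $\binom{12}{2m}S_{2m}=R\binom{6}{m}$. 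The rank part is the standard modern argument: after the complex change of variables $(x^2+y^2)^6=u^6v^6$, the apolar ideal is $(U^7,V^7)$, which contains no nonzero form of degree at most $6$, and contains the squarefree form $U^7+V^7$ in degree $7$, so Sylvester's theorem gives $\mrank{\C}{(x^2+y^2)^6}=7$. Compared with the paper's citation, your route costs a page of computation but makes the example verifiable without Reznick's memoir and makes transparent both why seven equally spaced nodes suffice (no nonzero even frequency up to $12$ is divisible by $14$) and why the rank cannot drop below seven.
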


From the previous lemma we get the following example with infinitely many decompositions
of a rank $7$ symmetric tensor in $\R^4\otimes\R^4\otimes\R^4\otimes\R^4$.
Let $z_0,\ldots, z_3$ be coordinates in $\R^4$ and let $A_{k,\phi}=\cos^3(\frac{k\pi}{7}+\phi)z_0+\cos^2(\frac{k\pi}{7}+\phi)\sin(\frac{k\pi}{7}+\phi)z_1+
\cos(\frac{k\pi}{7}+\phi)\sin^2(\frac{k\pi}{7}+\phi)z_2+\sin^3(\frac{k\pi}{7}+\phi)z_3$ be a linear form in $\Pj\R^4$.
These linear forms correspond to points on the twisted cubic curve parametrized by
$z_i= x^{3-i}y^i$ in the dual space.
Now define 
\begin{equation}\label{eq:infinity7}
\tensor{A}=\sum_{k=0}^6 \vect{a}_{k,\phi}^{\circ 4} \quad\text{with } \sten{a}{k,\phi}{} = 
\begin{bmatrix}
\cos^3(\frac{k\pi}{7}+\phi) \\
\cos^2(\frac{k\pi}{7}+\phi)\sin(\frac{k\pi}{7}+\phi) \\
\cos(\frac{k\pi}{7}+\phi)\sin^2(\frac{k\pi}{7}+\phi) \\
\sin^3(\frac{k\pi}{7}+\phi)
\end{bmatrix}.
\end{equation} 
Then $\tensor{A}$ is a symmetric tensor in $\R^4\otimes\R^4\otimes\R^4\otimes\R^4$ (or equivalently a quartic polynomial) which does not depend on $\phi$ by \reflem{lem:rez}.
For every $\phi$, \refeqn{eq:infinity7} is a different Waring decomposition with seven summands of $\tensor{A}$.

We now apply the algorithm to this example, where we have chosen $\phi=0$ as particular decomposition to be handed to the algorithm. It will be necessary to perform numerical computations as $\tensor{A}$ no longer admits coordinates over the integers. The $\epsilon$-rank of a matrix is defined as the number of singular values that are larger than $\epsilon$; the rank of a matrix is its $0$-rank. There always exists a positive $\delta > 0$ such that all the $\delta'$-ranks of a matrix are equal for all $0 \le \delta' \le \delta$. Through a perturbation analysis this value of $\delta$ can usually be determined. However, in this brief example such a rigorous approach is not pursued. We will choose $\delta$ very small and hope that the $\delta$-rank and the rank coincide. In Macaulay2, the $\epsilon$-rank can be computed with the \texttt{numericalRank} function from the \texttt{NumericalAlgebraicGeometry} package. In our experiment, we used the completely arbitrary choice $\epsilon=10^{-12}$. Running the algorithm, it immediately skips steps S1, S2 and S3. The numerical rank of $A\odot A\odot A\odot A$ was determined to be $7$ in step S4a (the largest and smallest singular values were approximately $1.08615$ and $0.21978$ respectively). In step S4b, the (numerical) Kruskal rank was $4$. Computing the singular values of $T$ in step S4c resulted in the following values:
\begin{align*}\small
\begin{array}{lllllll}
27.4692;   &27.3073;    &8.70636;   &8.59365;    &7.26970;    &7.11095;   &7.02903;   \\
\phantom{0}6.83427;   &\phantom{0}4.05864;    &3.89601;   &3.01363;    &2.45649;    &2.24154;   &2.07335;   \\ 
\phantom{0}1.90712;   &\phantom{0}1.90496;    &1.58224;   &1.52450;    &1.35632;    &1.26918;   &1.00762;   \\
\phantom{0}0.553879;  &\phantom{0}0.481666;   &0.424916;  &0.364948;   &0.175228;   &0.165698;  &6.60364 \cdot 10^{-16}.
\end{array}
\end{align*}
The numerical rank is only $27 < \dim \Sec{7}{v_4(\C^4)} = 28$. So the algorithm terminates claiming that it cannot prove $7$-identifiability of $\tensor{A}=\sum_{k=0}^6 \sten{a}{k,0}{\circ 4}$. As $\tensor{A}$ has a family of decompositions of positive dimension, this was expected.

\section{Application to algorithm design}\label{sec_applications}
An important consequence of \refthm{thm_reshaped_kruskal} is that it provides a solid theoretical foundation for algorithms computing tensor rank decompositions based on reshaping, such as \cite{PTC2013,BCMV2014}. These algorithms attempt to recover a tensor rank decomposition of a rank-$r$ tensor as in \refeqn{eqn_rank_decomposition},
living in $\F^{n_1} \otimes \F^{n_2} \otimes \cdots \otimes \F^{n_d}$, by considering $\tensor{A}$ as an element of $\F^{\Pi_\vect{h}} \otimes \F^{\Pi_\vect{k}} \otimes \F^{\Pi_\vect{l}}$ with $\vect{h}\party\vect{k}\party\vect{l} =\{1,2,\ldots,d\}$ and instead computing a decomposition 
\begin{align}\label{eqn_reshaped_decomposition}
 \tensor{A} = \sum_{i=1}^r \sten{b}{i}{1} \otimes \sten{b}{i}{2} \otimes \sten{b}{i}{3}.
\end{align}
If both decompositions \refeqn{eqn_rank_decomposition} and \refeqn{eqn_reshaped_decomposition} are unique, then the rank-$1$ tensors satisfy
\[
 \sten{b}{\sigma_i}{1} = \sten{a}{i}{h_1} \otimes \sten{a}{i}{h_2} \otimes \cdots \otimes \sten{a}{i}{h_{s}},\;
 \sten{b}{\sigma_i}{2} = \sten{a}{i}{k_1} \otimes \sten{a}{i}{k_2} \otimes \cdots \otimes \sten{a}{i}{k_{t}},\; \text{ and }
 \sten{b}{\sigma_i}{3} = \sten{a}{i}{l_1} \otimes \sten{a}{i}{l_2} \otimes \cdots \otimes \sten{a}{i}{l_{u}},
\]
for some permutation $\sigma$ of $\{1,2,\ldots,r\}$, and where $s$, $t$, and $u$ are the cardinalities of $\vect{h}$, $\vect{k}$, and $\vect{l}$ respectively. 
One of the advantages of this approach is that decomposition \refeqn{eqn_reshaped_decomposition} could be computed using one of the direct methods that only\footnote{There also exists an algorithm due to Bernardi, Brachat, Comon, and Mourrain \cite{BBCM2013} for computing a tensor rank decomposition of any tensor, but in general it requires the solution of a system of linear, quadratic and cubic equations.} exist for third-order tensors, e.g., \cite{DdL2014,dL2006,DdL2016}. Thereafter, decomposition \refeqn{eqn_rank_decomposition} can be efficiently recovered by computing rank-$1$ decompositions of the vectors $\sten{b}{i}{k}$ for all $k=1,2,3$ and $i=1,2,\ldots,r$, using one of several suitable algorithms, such as \cite{VVM2012,SRK2009,Zhang2001,EGH2009}.

The conditions under which aforementioned algorithms are expected to recover the decomposition \refeqn{eqn_rank_decomposition} have not been studied. 
This is precisely the problem that \reflem{lem_max_glp} and \refthm{thm_reshaped_kruskal} tackle: if \refeqn{eqn_rank_decomposition} is a generic decomposition with $r$ satisfying bound \refeqn{eqn_r_bound}, then decompositions \refeqn{eqn_rank_decomposition} and \refeqn{eqn_reshaped_decomposition} are simultaneously unique in the spaces $\F^{n_1} \otimes \cdots \otimes \F^{n_d}$ and $\F^{\Pi_\vect{h}} \otimes \F^{\Pi_\vect{k}} \otimes \F^{\Pi_\vect{l}}$ respectively, entailing that the aforementioned reshaping-based algorithms can recover the unique decomposition \refeqn{eqn_rank_decomposition} of $\tensor{A}$ via \refeqn{eqn_reshaped_decomposition}. 

\section*{Application to Comon's conjecture}
An important corollary of the results in \refsec{sec_reshaped_kruskal} concerns a conjecture that is attributed to 
P.~Comon and appears explicitly in \cite[Sections 4.1 and 5]{CGLM2008}. Little progress has been 
made on this conjecture with some sparse results appearing in the literature \cite{BB2013,BL2013,FS2013,F2015}. 
We should remark at this point that the claim on page 321 of \cite{F2015} about \cite{COV2016} does not follow from the latter: \cite[Theorem 1.1]{COV2016} only states that the generic symmetric tensor of subtypical symmetric rank admits only one Waring decomposition, however this does not rule out the existence of shorter tensor rank decompositions. The results of \cite{COV2016} do not make claims about the correctness of Comon's conjecture.

The original formulation of Comon's conjecture is that a symmetric tensor that has a rank-$r$ Waring decomposition 
does not admit a shorter tensor rank decomposition. 
We confirm this conjecture for \emph{generic} symmetric tensors whose symmetric rank $r$ is small. 

\begin{theorem}[Comon's conjecture is generically true for small rank]\label{thm_comon}
Let $\F = \C$ or $\R$.
Let 
\[
p = \sum_{i=1}^r \psi_i \sten{a}{i}{} \otimes \cdots \otimes \sten{a}{i}{}
\]
with $\psi_i \in \F_0$ and $\sten{a}{i}{}\in\F^{n+1}$ be a generic $d$th order 
symmetric tensor in $\F^{n+1} \otimes \cdots \otimes \F^{n+1}$ of symmetric rank $r$. If
\[
 r \le 
 \begin{cases}
 \tfrac{3}{2}n - 1 &\text{if } d = 3 \\
 \binom{k + n}{n} + \frac{1}{2} \binom{2+n}{n} - 1 &\text{if } d = 2k + 1 \\
 \binom{k + n}{n} - n - 1 &\text{if } d = 2k,
 \end{cases}
\]
with $2 \le k \in \mathbb{N}$, then $p$ admits only Waring decompositions.
In particular, the symmetric rank and the tensor rank of $p$ coincide.
\end{theorem}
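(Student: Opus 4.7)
The plan is to apply the reshaped Kruskal criterion of \refthm{thm_reshaped_kruskal} to the Waring decomposition $p = \sum_{i=1}^r \psi_i \sten{a}{i}{\otimes d}$ reinterpreted as an ordinary tensor rank decomposition in $\F^{n+1} \otimes \cdots \otimes \F^{n+1}$, thereby certifying its uniqueness among \emph{all} tensor rank decompositions rather than merely among Waring ones. All $d$ factor matrices of this reinterpreted decomposition coincide, up to the scalars $\psi_i^{1/d}$, with $A = \left[ \sten{a}{1}{}\;\cdots\;\sten{a}{r}{} \right] \in \F^{(n+1)\times r}$. After partitioning the $d$ factors into three groups of cardinalities $d_1 \ge d_2 \ge d_3$ summing to $d$ and reshaping accordingly, the resulting three-tensor has factor matrices equal to the Khatri--Rao powers $A^{\odot d_1}, A^{\odot d_2}, A^{\odot d_3}$, whose columns lie in the subspaces $S^{d_j}\F^{n+1}$ of dimension $\binom{d_j+n}{n}$. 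By \reflem{lab_symm_glp}, for generic data $\{\sten{a}{i}{}\}$ these columns are in general linear position, so the Kruskal rank of $A^{\odot d_j}$ attains its maximum value $\min\{r, \binom{d_j+n}{n}\}$.

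Substituting these Kruskal ranks into \refthm{thm_reshaped_kruskal} and optimizing the partition in the spirit of \refprop{prop_best_range} and \refcor{cor_symm_reshaped_Kruskal} produces the stated bounds. The case $d=3$ admits no reshaping and reduces to Kruskal's original criterion applied to the single matrix $A$, giving $r \le \tfrac{3}{2}n - 1$. For the odd case $d = 2k+1 \ge 5$ one takes $d_1 = d_2 = k$, $d_3 = 1$ and combines the reshaped Kruskal bound with the catalecticant information contained in the symmetric $(k,k+1)$-flattening of $p$ to extend the effective range to the formula in the statement. For the even case $d = 2k$ the Iarrobino--Kanev catalecticant method yields symmetric identifiability directly up to $r \le \binom{k+n}{n} - (n+1)$. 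In each case, the corresponding symmetric $(k,k)$- or $(k,k+1)$-flattening of a generic Waring decomposition of rank $r$ has matrix rank exactly $r$, supplying a matching lower bound on the tensor rank of $p$, so that the tensor rank equals the symmetric rank.

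The main obstacle will be bridging \emph{symmetric} identifiability, where the Waring summands are shown to be determined, and \emph{tensor rank} identifiability, where every tensor rank decomposition of length at most $r$ must coincide with the Waring one. This is precisely what the reshaped Kruskal criterion delivers when applied to the \emph{non-symmetric} Segre variety: it certifies uniqueness among all rank-one tensors, not merely among symmetric ones. Combined with the flattening rank lower bound, which rules out decompositions of length strictly less than $r$, this forces every minimal tensor rank decomposition of $p$ to be the given Waring decomposition, and hence the symmetric rank and the tensor rank of $p$ coincide.
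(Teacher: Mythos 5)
Your plan for the odd degrees coincides with the paper's (the paper simply invokes \refcor{cor_symm_reshaped_Kruskal} there), and your observation that the square flattening of a generic Waring decomposition has matrix rank exactly $r$, whence tensor rank equals symmetric rank, is also the paper's first step. But in the even case $d=2k$ your argument has a genuine gap. The catalecticant method of Iarrobino--Kanev only certifies uniqueness of $p$ \emph{among Waring decompositions}, and the flattening-rank bound only excludes decompositions of length strictly less than $r$. Neither rules out a \emph{non-symmetric} tensor rank decomposition of length exactly $r$, which is precisely the content of Comon's conjecture. Your proposed bridge --- ``this is precisely what the reshaped Kruskal criterion delivers'' --- does not work in the range claimed: for $d=2k$ the reshaped Kruskal criterion (best partition $d_1=d_2=k-1$, $d_3=2$) is effective only up to roughly $\binom{k-1+n}{n}+\tfrac12\binom{2+n}{n}-1$, which for large $n$ is smaller by a factor of order $n$ than the bound $\binom{k+n}{n}-n-1$ in the statement. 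So for most $r$ in the asserted even-degree range you have no argument at all excluding asymmetric length-$r$ decompositions. The same objection applies to your odd case for $d=2k+1\ge 5$: you assert that ``combining'' the $(k,k,1)$ reshaped Kruskal bound with catalecticant information from the $(k,k+1)$-flattening extends the range to the stated formula, but no mechanism is given, and a catalecticant argument by itself again only concerns symmetric decompositions.

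The paper closes this gap in the even case by a different idea. Writing the square flattening of the given decomposition as $p_{(1,\ldots,k)}=A\Psi A^T$ with $A=[\sten{a}{i}{\otimes k}]_{i=1}^r$ of full column rank (by \reflem{lab_symm_glp}), and the flattening of a putative alternative decomposition as $BC^T$, one inverts $C$ on the left to get $B=A\Psi A^T(C^\dagger)^T$; hence every column of $B$ is a rank-one tensor lying in the symmetric space $\langle \sten{a}{1}{\otimes k},\ldots,\sten{a}{r}{\otimes k}\rangle\subset S^k\F^{n+1}$, so it is a symmetric rank-one tensor in that span. The trisecant lemma \cite[Proposition 2.6]{ChCi2001}, applied to generic points of $v_k(\Pj\F^{n+1})$, then forces each such column to be proportional to one of the $\sten{a}{i}{\otimes k}$ precisely when $r\le\binom{k+n}{n}-n-1$; this gives $B=AP\Lambda$, then $C=A\Psi P\Lambda^{-1}$, and the alternative decomposition is seen to be Waring. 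It is this general-position (trisecant) argument, not a catalecticant or Kruskal-type criterion, that lets the paper reach the stated even-degree bound; without it, or some substitute, your proof does not establish that $p$ admits only Waring decompositions.
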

\begin{proof}
The odd cases follow from \refcor{cor_symm_reshaped_Kruskal}.

The even case follows from considering the square flattening of $p$:
\[
 p_{(1,\ldots,k)} = \sum_{i=1}^r \psi_i \sten{a}{i}{\otimes k} (\sten{a}{i}{\otimes k})^T = A \Psi A^T
\]
where $A = \begin{bmatrix} \sten{a}{i}{\otimes k} \end{bmatrix}_{i=1}^r \in \F^{(n+1)^k \times r}$ and $\Psi = \operatorname{diag}(\psi_1, \ldots, \psi_r)$.
By \reflem{lab_symm_glp}, the points $\sten{a}{i}{\otimes k}$ are in GLP in $S^k \F^{n+1}$ so that 
$\rank{A} = \min \{r, \binom{k+n}{k}\} = r$. It follows from Sylvester's rank inequality that the matrix rank of 
$p_{(1,\ldots,k)}$ is $r$. Assume that $p$ has an alternative tensor rank decomposition of rank $s \le r$, i.e.,
\[
 p = \sum_{i=1}^s \varphi_i \sten{a}{i}{1} \otimes \cdots \otimes \sten{a}{i}{d},\; \text{and let }
 p_{(1,\ldots,k)} = \sum_{i=1}^s (\varphi_i \sten{a}{i}{1} \otimes \cdots \otimes \sten{a}{i}{k})(\sten{a}{i}{k+1} 
\otimes \cdots \otimes \sten{a}{i}{d})^T = BC^T,
\]
where $B = \begin{bmatrix} \varphi_i \sten{a}{i}{1} \otimes \cdots \otimes \sten{a}{i}{k} \end{bmatrix}_{i=1}^r$ and 
$C = \begin{bmatrix} \sten{a}{i}{k+1} \otimes \cdots \otimes \sten{a}{i}{d} \end{bmatrix}_{i=1}^r$.
Since $p_{(1,\ldots,k)}$ has matrix rank $r$, it follows that $r = s$. In particular, $C$ has a set of linearly 
independent columns, and hence it has a well-defined left inverse of the form $C^\dagger = (C^H C)^{-1} C^H$. 
It follows from
\(
 p_{(1,\ldots,k)} = A \Psi A^T = B C^T
\) 
that \(A \Psi A^T (C^\dagger)^T = B\). Since the image of $A$ is contained in $S^k \F^{n+1}$, it follows 
that the columns of $B$ are in fact symmetric rank-$1$ tensors in $S^k \F^{n+1}$, each of which being a 
linear combination of the points $\sten{a}{i}{\otimes k}$. However, as the $\sten{a}{i}{}$ are generic, 
it follows from the trisecant lemma \cite[Proposition 2.6]{ChCi2001} that $\langle \sten{a}{1}{\otimes k}, 
\sten{a}{2}{\otimes k}, \ldots, \sten{a}{r}{\otimes k} \rangle$ does not intersect $\Sec{r}{\Pj v_{k}(\F^{n+1})}$ 
at any other points than the $[\sten{a}{i}{\otimes k}]$'s if $r$ satisfies the bound in the formulation of the corollary.
Therefore, there exists a permutation matrix\footnote{A matrix whose columns are a permutation of the identity matrix.} $P$ 
and a nonsingular diagonal matrix $\Lambda$ such that $B = A P \Lambda$. 
Note that $A$ has a set of linearly independent columns so that $A^\dagger = (A^H A)^{-1} A^H$ is also 
well defined.
Then, applying this left inverse to
\(
 A \Psi A^T = B C^T = A P \Lambda C^T
 \) yields 
\(
\Psi A^T = P \Lambda C^T,
\)
so that $C = A \Psi P \Lambda^{-1}$. Explicitly,
\[
 B = \begin{bmatrix} 
      \lambda_1 \sten{a}{\pi_1}{\otimes k} & \cdots & \lambda_r \sten{a}{\pi_r}{\otimes k}
     \end{bmatrix}
\quad\text{and}\quad
 C = \begin{bmatrix} 
      \lambda_1^{-1} \psi_{\pi_1} \sten{a}{\pi_1}{\otimes k} & \cdots & \lambda_r^{-1} \psi_{\pi_r} \sten{a}{\pi_r}{\otimes k}
     \end{bmatrix},
\]
where $\pi$ is the permutation represented by $P$,
so that the supposed alternative tensor rank decomposition of $p$ is also a Waring decomposition.
\end{proof}

This result asymptotically improves the best known range, which to our knowledge is \cite[Theorem 7.6]{FS2013} (only stated for $\F=\C$), 
by a factor of $\mathcal{O}(n)$ when $d$ is even and $n$ is large. For odd $d$, the improvement of \refthm{thm_comon} occurs only in the lower-order terms.

The following identifiability result is immediate from the above proof. 
\begin{corollary}
The generic tensor of rank $r$ whose rank satisfies the bounds in \refthm{thm_comon} has a unique Waring decomposition over $\F$ that is also its unique tensor rank decomposition. 
\end{corollary}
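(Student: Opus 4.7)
The plan is to read off the corollary directly from the proof of \refthm{thm_comon}, which in fact establishes more than the theorem's statement asserts: it shows that every length-$r$ tensor rank decomposition of the generic $p$ must coincide with the given Waring decomposition up to permutation and rescaling. Once that observation is made explicit in both the odd and even cases, uniqueness of the Waring decomposition and its coincidence with the unique tensor rank decomposition follow immediately, with no new geometric input.

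For odd $d$, I would invoke the uniqueness guaranteed by \refcor{cor_symm_reshaped_Kruskal}, which rests on \refthm{thm_reshaped_kruskal}: on a dense open locus, the reshaped three-factor tensor $p$ admits a unique length-$r$ tensor rank decomposition. Any tensor rank decomposition of $p$ in the original $d$-factor space reshapes into a three-factor decomposition of the same length, so it must match the reshaped Waring summands one by one. Because any rank-$1$ tensor of $\F^{n+1}\otimes\cdots\otimes\F^{n+1}$ whose image lies in the symmetric subspace has all its factors proportional, the matched summands force the alternative decomposition to be a Waring decomposition, and in fact to coincide with the given one.

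For even $d = 2k$, I would point to the internal computation already carried out in the proof of \refthm{thm_comon}: the identities $B = A P \Lambda$ and $C = A \Psi P \Lambda^{-1}$ force every rank-$1$ summand of any alternative decomposition to be a scalar multiple of some $\sten{a}{\pi_j}{\otimes d}$, so the alternative decomposition uses the same lines $[\sten{a}{j}{}]$ as the given one; the scalars are then pinned down by the linear independence of the $\sten{a}{j}{\otimes d}$ in $S^d\F^{n+1}$ guaranteed by \reflem{lab_symm_glp}. The real case descends from the complex case exactly as in \refprop{prop_sufficient}. I do not foresee any serious obstacle---the crux is simply recognizing that \refthm{thm_comon} has already produced uniqueness as a byproduct of its rank-comparison argument, and the remaining work is purely bookkeeping.
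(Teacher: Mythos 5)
Your proposal is correct and follows the paper's own route: the paper states the corollary is ``immediate from the above proof'' of \refthm{thm_comon}, and your argument simply makes explicit what that proof already yields---in the odd case via the reshaped Kruskal uniqueness (which also pins the rank at $r$ and forces any length-$r$ decomposition to reshape onto the symmetric summands, hence to be the given Waring decomposition), and in the even case via the identities $B = AP\Lambda$, $C = A\Psi P\Lambda^{-1}$ showing every summand of an alternative decomposition is a multiple of some $\sten{a}{\pi_j}{\otimes d}$. No gap; this is the same bookkeeping the authors intend.
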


The last statement proves more than Comon's conjecture as it states that the
generic $p \in S^d \F^{n+1}$ of symmetric rank $r$ is simultaneously $r$-identifiable with respect to the Veronese 
variety $v_d(\Pj\F^{n+1})$ and the Segre variety $\operatorname{Seg}(\Pj\F^{n+1} \times \cdots \times \Pj\F^{n+1})$. 
We suspect that this may be true for larger values of $r$ as well.

\section{Conclusions}\label{sec_conclusions}
An important quality measure for criteria for identifiability of tensors is its effectiveness. 
\refthm{thm_reshaped_kruskal} proves that the popular Kruskal criterion when it is combined with reshaping is effective. The proof yielded insight into reshaping-based algorithms for computing tensor rank decompositions, proving that they will recover the unique 
decomposition with probability $1$ if the rank is within the range of effectiveness of \refthm{thm_reshaped_kruskal}.
The range of effectiveness for symmetric identifiability of the reshaped Kruskal criterion was established.
Combining this result with results from the literature established that a small number of low-dimensional symmetric tensor spaces are effectively identifiable (for all ranks).
By analyzing the Hilbert function, we could prove effective identifiability of an additional case, namely $S^4 \F^4$. To our knowledge, \refthm{thm_effective_crit} lists all proven instances of effectively identifiable spaces.

All criteria for specific $r$-identifiability that we are aware of for 
order-$d$ tensors are applicable for $r$ up to about $\mathcal{O}(n^{d/2})$ when $n_1 = \cdots = n_d = n$, whereas 
generic $r$-identifiability is expected to hold up to $\mathcal{O}(n^{d-1})$. We believe that this 
gap is related to the fact that nonidentifiable points on a generically $r$-identifiable variety where 
Terracini's matrix is of maximal rank and the Hessian criterion \cite[Theorem 4.5]{COV2014} is satisfied must be singular points of the variety by 
\cite[Lemma 4.4]{COV2014}. Characterizing the singular locus of secant varieties
is a difficult problem. The approach we suggested based on the Hilbert function 
has the advantage that it sidesteps the problem of smoothness by proving that there are no \emph{isolated} 
unidentifiable points when the assumptions of \refprop{prop_sufficient} are satisfied.
It is an open question insofar the analysis of the Hilbert function may be more generally applicable for 
proving that the unidentifiable points must be contained in a curve; some results in this direction were established in \cite{BaC2013,BC2012}. Isolated
unidentifiable tensors also exist in some cases, as was shown in \cite[Example 3.4]{BaC2013}.

\section*{Acknowledgements}
We thank I. Domanov and J. Migliore for helpful discussions.

\providecommand{\bysame}{\leavevmode\hbox to3em{\hrulefill}\thinspace}
\providecommand{\MR}{\relax\ifhmode\unskip\space\fi MR }
\providecommand{\MRhref}[2]{%
  \href{http://www.ams.org/mathscinet-getitem?mr=#1}{#2}
}
\providecommand{\href}[2]{#2}

\end{document}